\theoremstyle{plain}
\newtheorem{thm}{Theorem}[section]
\newtheorem{lem}[thm]{Lemma}
\newtheorem{prop}[thm]{Proposition}
\newtheorem{cor}[thm]{Corollary}
\newtheorem{problem}[thm]{Problem}
\newtheorem{example}[thm]{Example}
\newtheorem{definition}[thm]{Definition}
\newtheorem{rem}[thm]{Remark}
\newtheorem{q}[thm]{Question}
\newcommand{\Ga}{{\bf G}_a}
\newcommand{\A}{{\bf A}}
\newcommand{\cA}{{\mathcal A}}
\newcommand{\cK}{{\mathcal K}}
\newcommand{\cP}{{\mathcal P}}
\newcommand{\Spec }{\mathop{\rm Spec}\nolimits}
\newcommand{\lin }{\mathop{\rm lin}\nolimits}
\newcommand{\zs}{\{ 0\} }
\newcommand{\sm}{\setminus}
\newcommand{\GL }{\mathit{GL}}
\newcommand{\Aut }{\mathop{\rm Aut}\nolimits}
\newcommand{\Aff }{\mathop{\rm Aff}\nolimits}
\newcommand{\T }{\mathop{\rm T}\nolimits}
\newcommand{\J }{\mathop{\rm J}\nolimits}
\newcommand{\id}{{\rm id}}
\newcommand{\nd}{\noindent}
\newcommand{\ol}{\overline}
\newcommand{\Q}{{\bf Q}}
\newcommand{\Z}{{\bf Z}}
\newcommand{\ep}{\epsilon}
\newcommand{\x}{{\boldsymbol x}}
\newcommand{\z}{{\boldsymbol z}}
\newcommand{\kx}{k[\x]}
\newcommand{\Rx}{R[\x]}
\newcommand{\Rxyz}{R[x,y,\z]}
\newcommand{\kxyz}{k[x,y,\z]}
\newcommand{\Rxx}{R[\x^{\pm 1}]}
\begin{document}

\title[Exponentiality of automorphisms of order $p$]
{On exponentiality of automorphisms of 
$\A ^n$ of order $p$ in characteristic $p>0$}

\author{Shigeru Kuroda}

\address{Department of Mathematical Sciences\\
Tokyo Metropolitan University\\
1-1 Minami-Osawa, Hachioji, Tokyo, 192-0397, Japan}
\email{kuroda@tmu.ac.jp}

\thanks{This work is partly supported by JSPS KAKENHI
Grant Numbers 18K03219 and 22K03273}

\subjclass[2020]{Primary 14R10, Secondary 14R20, 13A50. }


\begin{abstract}
Let $X$ be an integral affine scheme 
of characteristic $p>0$, 
and $\sigma $ a non-identity automorphism of $X$. 
If $\sigma $ is {\it exponential}, 
i.e., induced from a $\Ga $-action on $X$, 
then $\sigma $ is obviously of order $p$. 
It is easy to see that the converse is not true in general. 
In fact, 
there exists $X$ which admits an automorphism of order $p$, 
but admits no non-trivial $\Ga $-actions. 
However, 
the situation is not clear 
in the case where $X$ is the affine space $\A _R^n$, 
because $\A _R^n$ admits various $\Ga $-actions 
as well as automorphisms of order $p$.

In this paper, we study exponentiality of 
automorphisms of $\A _R^n$ of order $p$, 
where the difficulty stems from 
the non-uniqueness of $\Ga $-actions inducing an exponential automorphism. 
Our main results are as follows. 
(1) We show that the triangular automorphisms of $\A _R^n$ of order $p$ 
are exponential in some low-dimensional cases. 
(2) We construct a non-exponential automorphism of $\A _R^n$ of order $p$ 
for each $n\ge 2$. 
Here, $R$ is any UFD which is not a field. 
(3) We investigate the 
$\Ga $-actions inducing 
an elementary automorphism of $\A _R^n$. 
\end{abstract}

\maketitle

\section{Introduction}\label{sect:intro}
\setcounter{equation}{0}

Throughout this paper, 
all rings are integral domains containing fields 
of characteristic $p\ge 0$. 
Let $B$ be a ring 
and $\kappa $ the prime field. 
Then, 
an action of the additive group scheme 
$\Ga =\Spec \kappa [T]$ on $\Spec B$ 
is described as a homomorphism 
$E:B\to B[T]=B\otimes _\kappa \kappa [T]$ of rings 
with the following conditions (A1) and (A2), 
where $T$ is a variable. 

\smallskip 

\nd (A1) $E_0=\id $. 
Here, 
we define 
\begin{equation}\label{eq:E_a}
E_a:B\stackrel{E}{\to }B[T]\ni f(T)\mapsto f(a)\in B 
\end{equation}
for each element $a$ 
of the {\it invariant ring} $B^E:=\{ b\in B\mid E(b)=b\} $. 

\smallskip 

\nd (A2) 
Let $\Delta :\kappa [T]\to \kappa [T]\otimes _\kappa \kappa [T]$ 
be the homomorphism 
of $\kappa $-algebras defined by $\Delta (T)=T\otimes 1+1\otimes T$. 
Then, 
the following diagram commutes: 
$$
\xymatrix
{
B \ar[d]_E \ar[r]^(.41){E} 
& B\otimes _\kappa \kappa [T] \ar[d]^{E\otimes \id } \\ 
B\otimes _\kappa \kappa [T] \ar[r]^(.41){\id \otimes \Delta } 
& B\otimes _\kappa \kappa [T]\otimes _\kappa \kappa [T]  }
$$

For simplicity, 
we call such an $E$ a {\it $\Ga $-action on} $B$, 
which is 
{\it nontrivial} if $B^E\ne B$. 
Let $R$ be a subring of $B$. 
Then, we call a $\Ga $-action $E$ on $B$ 
a $\Ga $-action {\it over} $R$ if $R\subset B^E$, 
or equivalently, 
$E$ is a homomorphism of $R$-algebras.

We denote by $\Aut B$ (resp.\ $\Aut _RB$) 
the automorphism group of the ring $B$ 
(resp.\ $R$-algebra $B$). 
For $\sigma \in \Aut B$, 
we define $B^{\sigma }:=\{ b\in B\mid \sigma (b)=b\} $.

Let $E$ be any $\Ga $-action on $B$, 
and $a\in B^E$. 
Then, $E_a$ is an element of $\Aut _{B^E}B$ 
with inverse $E_{-a}$, 
since 
$E_{a+b}=E_a\circ E_b$ for each $a,b\in B^E$ by (A2), 
$E_0=\id $ by (A1), 
and $E(c)=c$ gives $E_a(c)=c$ for each $c\in B^E$. 
We note that $B^{E_a}=B^E$ if $p=0$, 
and $B^{E_a}$ is transcendental over $B^E$ if $p>0$, 
unless $E_a=\id $ 
(cf.~Remark~\ref{rem:local} (vi)). 
Since $(E_a)^l=E_{la}$ for all $l\in \Z $, 
we have $(E_a)^p=E_0=\id $ if $p>0$.

\begin{example}\label{ex:Ga-action on R[x]}\rm
Let $R[x]$ be the polynomial ring in one variable 
over a ring $R$. 
For $a\in R\sm \zs $, 
we define $E:R[x]\ni f(x)\mapsto f(x+aT)\in R[x][T]$. 

\nd (i) 
$E$ is a nontrivial $\Ga $-action on $R[x]$ over $R$ 
with $R[x]^E=R$. 

\nd (ii) 
For $b\in R$, 
the map $E_b$ is an element of $\Aut _RR[x]$ 
such that $E_b(x)=x+ab$. 

\nd (iii) 
If $p=0$, 
then we have $R[x]^{E_1}=R$, 
since $\deg (f(x+a)-f(x))=\deg f(x)-1\ge 0$ 
holds for any $f(x)\in R[x]\sm R$. 
See Lemma~\ref{lem:invariant ring} for the case $p>0$. 

\end{example}

\begin{definition}\label{def:exp}\rm 
We say that $\sigma \in \Aut B$ is 
{\it exponential} 
if there exists a $\Ga $-action $E$ on $B$ 
and $a\in B^E$ such that $\sigma =E_a$. 
If we can choose $E$ to be a $\Ga $-action over $R$, 
i.e., 
$R\subset B^E$, 
then we say that $\sigma $ is exponential 
{\it over} $R$. 
\end{definition}

For example, 
$\sigma \in \Aut _RR[x]$ is exponential over $R$ 
if $\sigma (x)\in x+R$ 
by Example~\ref{ex:Ga-action on R[x]}. 
It should be noted that if $\sigma \in \Aut _RB$ is exponential, 
then $\sigma $ is exponential over $R$ when $p=0$, 
but is not necessarily exponential over $R$ when $p>0$, 
since $R\subset B^{E_a}$ does not imply $R\subset B^E$ 
for a nontrivial $\Ga $-action $E$ on $B$ and $a\in B^E$.

By the following remark, 
$\sigma \in \Aut B$ is exponential (over $R$) 
if and only if $\sigma =E_1$ 
for some $\Ga $-action $E$ on $B$ 
(over $R$).

\begin{rem}\label{rem:multiple of Ga-action}\rm 
If $E$ is a $\Ga $-action on $B$ and $a\in B^E\sm \zs $, 
then 
$$
E':B\stackrel{E}{\to }B[T]\ni f(T)\mapsto f(aT)\in B[T]
$$ 
is also a $\Ga $-action on $B$ with $B^{E'}=B^E$  
and $E'_1=E_a$ (cf.~(\ref{eq:E_a})). 
\end{rem}

Now, assume that $p>0$. 
Then, 
every non-identity exponential automorphism has order $p$. 
However, the converse is not true in general. 
For example, 
let $\Rxx =R[x_1^{\pm 1},\ldots ,x_n^{\pm 1}]$ 
be the Laurent polynomial ring in $n$ variables over a ring $R$. 
If $n\ge p$, 
then $\sigma \in \Aut _R\Rxx $ defined by 
$x_1\mapsto x_2\mapsto \cdots \mapsto x_p\mapsto x_1$ 
and $x_i\mapsto x_i$ for $i>p$ has order $p$. 
This $\sigma $ is not exponential, 
because $x_1,\ldots ,x_n\in \Rxx ^*\subset \Rxx ^E$ 
for any $\Ga $-action $E$ on $\Rxx $ (cf.~Remark~\ref{rem:local} (ii)).

Let $\Rx :=R[x_1,\ldots ,x_n]$ 
be the polynomial ring in $n$ variables over a ring $R$. 
The purpose of this paper is to study the following problem when $p>0$.

\begin{problem}\label{prob:main}\rm 
Are all elements of $\Aut _R\Rx $ of order $p$ 
exponential over $R$?  
\end{problem}

In contrast to $\Rxx $, 
the ring $\Rx $ admits various $\Ga $-actions 
as well as automorphisms of order $p$, 
and the problem is quite subtle. 
The answer is known to be affirmative 
if $n=1$ or if $n=2$ and $R$ is a field 
(cf.~Remark~\ref{rem:strict triangular} (ii) 
and Theorem~\ref{thm:Osaka}), 
because in these cases the structure of $\Aut _R\Rx $ is well understood.

Problem~\ref{prob:main} relates to problems about subgroups 
of $\Aut _R\Rx $ stated below: 
Recall that $\sigma \in \Aut _R\Rx $ is said to be

\nd $\bullet $ 
{\it affine} 
if $\sigma (x_1),\ldots ,\sigma (x_n)$ are of degree one; 

\nd $\bullet $ 
{\it triangular} if 
$\sigma (x_i)\in R^*x_i+R[x_1,\ldots ,x_{i-1}]$ for $i=1,\ldots ,n$; 

\nd $\bullet $ 
{\it elementary} 
if $\sigma (x_1)-x_1\in A:=R[x_2,\ldots ,x_n]$ 
and $\sigma \in \Aut _AA[x_1]$.

Let $\Aff _n(R)$ (resp.\ $\J _n(R)$) be the subgroup of $\Aut _R\Rx $ 
consisting of all affine (resp.\ triangular) automorphisms of $\Rx $, 
and $\T _n(R)$ (resp.\ $\T _n'(R)$) that generated by 
$\Aff _n(R)
\cup \{ \sigma \in \Aut _R\Rx \mid 
\text{$\sigma $ is elementary 
(resp.\ exponential over $R$)}\} $. 
By Example~\ref{ex:Ga-action on R[x]}, 
``elementary" implies ``exponential over $R$". 
Hence, 
we have 
\begin{equation}\label{eq:TGPEGC}
\T _n(R)\subset \T '_n(R)\subset \Aut _R\Rx . 
\end{equation}
It is a difficult problem to decide if 
``$\subset $" in (\ref{eq:TGPEGC}) are ``$=$" 
(cf.~\cite[\S 2.1]{Essen}). 
A classical result due to 
Jung~\cite{Jung} and van der Kulk~\cite{Kulk} 
says that $\T _2(R)=\Aut _RR[x_1,x_2]$ if $R$ is a field. 
Nagata~\cite{Nagata} showed that 
$\T _2(R)\ne \T _2'(R)$ if $R$ is not a field. 
A breakthrough has been made by Shestakov-Umirbaev~\cite{SU} 
in 2003, 
who showed that $\T _3(R)\ne \T _3'(R)$ 
if $R$ is a field with $p=0$. 
It is an important open question 
whether 
$\T _n(R)=\Aut _R\Rx $ holds when $n\ge 3$ and $p>0$. 
The subgroup $\T _n'(R)$ is not studied well for any $p\ge 0$. 
For $p>0$, 
the author~\cite{ch order} proposed to study 
the subgroup $\T _n''(R)$ generated by 
$\Aff _n(R)
\cup \{ \sigma \in \Aut _R\Rx \mid \sigma ^p=\id \} $. 
Obviously, 
we have $\T _n'(R)\subset \T _n''(R)$, 
but we do not know whether 
$\T _n'(R)=\T _n''(R)$ or $\T _n''(R)=\Aut _R\Rx $ holds 
when $n\ge 3$, or $n=2$ and $R$ is not a field. 
To study $\T _n'(R)$ and $\T _n''(R)$, 
the automorphisms of order $p$ and 
their exponentiality are of great interest.

It should also be mentioned that 
the automorphisms of order $p$ 
have been studied from various perspectives. 
Some relevant references are 
Tanimoto~\cite{Tani,Tani2,Tani3}, 
Miyanishi~\cite{Miyanishi3}, 
Maubach~\cite{Maubach} and the author~\cite{ch order} 
(see also \cite{MI,Takeda}).

In this paper, we have three main contributions as follows.

\subsection*{1$^\circ $}\label{subsect:intro1}

We give some positive answers to 
Problem~\ref{prob:main} when $\sigma $ is triangular. 
Our results are summarized as follows.

\begin{thm}\label{thm:main}
Let $R$ be a ring with $p>0$, 
and let $\sigma \in \J _n(R)$ be of order $p$.

\nd {\rm (i)} 
If $n=2$, 
then $\sigma $ is exponential over $R$.

\nd {\rm (ii)} 
If $n=2$ and $\sigma (x_1)\ne x_1$, 
then there exist $a\in R\sm \zs $ and 
$\theta (x_1)\in \sum _{p\nmid i}Rx_1^i$ 
such that $\sigma (x_1)=x_1+a$ and 
$\sigma (x_2)=x_2+a^{-1}(\theta (x_1)-\theta (x_1+a))$.

\nd {\rm (iii)} 
If $n=3$ and $R$ is a field, 
then $\sigma $ is exponential over $R$. 
\end{thm}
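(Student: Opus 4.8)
The plan is to handle the three cases in increasing order of difficulty, exploiting the triangular structure to reduce to low-dimensional polynomial computations in characteristic $p$.

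For part (i), $\sigma\in\J_2(R)$ means $\sigma(x_1)=ax_1+b$ with $a\in R^*$, $b\in R$, and $\sigma(x_2)=cx_2+g(x_1)$ with $c\in R^*$, $g(x_1)\in R[x_1]$. First I would analyze the induced automorphism on $R[x_1]$: since $\sigma$ has order $p$, its restriction $\bar\sigma\colon x_1\mapsto ax_1+b$ has order dividing $p$, forcing either $\bar\sigma=\id$ (so $a=1$, $b=0$) or $\bar\sigma$ of order exactly $p$, which by a direct computation of $\bar\sigma^p$ forces $a=1$ and $b\ne 0$ (the case $a\ne 1$ is excluded because $a^p=1$ in an integral domain of characteristic $p$ gives $a=1$). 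Similarly the ``coefficient'' $c$ must equal $1$. So $\sigma(x_1)=x_1+b$, $\sigma(x_2)=x_2+g(x_1)$ for some $b\in R$, and one must then determine which $g$ are compatible with $\sigma^p=\id$: iterating gives $\sigma^p(x_2)=x_2+\sum_{i=0}^{p-1}g(x_1+ib)$, so the condition is $\sum_{i=0}^{p-1}g(x_1+ib)=0$. When $b=0$ this forces $g=0$ (as $p\ne 0$ in $R$), i.e.\ $\sigma=\id$, contradicting order $p$; so in the nontrivial case $b\in R\sm\zs$. Then I would show that $E\colon x_1\mapsto x_1+bT$, $x_2\mapsto x_2+h(x_1,T)$ defines a $\Ga$-action over $R$ with $E_1=\sigma$, where $h$ is obtained by ``integrating'' $g$ formally; concretely, writing $g(x_1)=\sum_j r_jx_1^j$, the right choice is $h(x_1,T)=\sum_j r_j\,\phi_j(x_1,T)$ where $\phi_j$ is the unique polynomial with $\phi_j(x_1,b\ell)=\sum_{i=0}^{\ell-1}(x_1+ib)^j$ for all $\ell$ — the existence and cocycle property (A2) of this $h$ is exactly the content of part (ii), which pins down the shape $g(x_1)=a^{-1}(\theta(x_1)-\theta(x_1+a))$ with $\theta\in\sum_{p\nmid i}Rx_1^i$, and for such $g$ one takes $\theta$ itself to build the $\Ga$-action. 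So (i) and (ii) should be proved together.

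For part (ii), the key is to solve the functional equation $\sum_{i=0}^{p-1}g(x_1+ib)=0$ in $R[x_1]$ with $b=a\in R\sm\zs$. I would expand in the basis of monomials and use the fact that for a monomial $x_1^j$ the sum $\sum_{i=0}^{p-1}(x_1+ia)^j$ is (up to the known Bernoulli-type identities mod $p$) a polynomial whose vanishing forces constraints; equivalently I would pass to the operator $\Phi = \id + \tau + \cdots + \tau^{p-1}$ on $R[x_1]$, where $\tau(x_1)=x_1+a$, and note $\Phi\circ(\tau-\id)=\tau^p-\id=0$ since $\tau$ has order $p$ (as $pa$... rather since $(x_1+a)$ shifted $p$ times returns $x_1+pa=x_1$). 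Hence $g\in\ker\Phi$ and I want $\ker\Phi=\operatorname{im}(\tau-\id)$. Counting degrees: $\tau-\id$ lowers degree by exactly one on $R[x_1]$ when $p\nmid\deg$, and the image is governed by the subspace $\sum_{p\nmid i}Rx_1^i$. The clean statement is that $g=a^{-1}(\theta(x_1)-\theta(x_1+a))=-a^{-1}(\tau-\id)\theta$ for a unique $\theta\in\sum_{p\nmid i}Rx_1^i$, proved by a degree/leading-coefficient induction: given $g$ with $\Phi g=0$, its top-degree term forces $p\nmid\deg g+1$ (else $\Phi$ would not kill it), then one subtracts an appropriate $(\tau-\id)(cx_1^{\deg g+1})$ and recurses.

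For part (iii), where $R$ is a field and $n=3$, the obstacle is genuinely harder and is where I expect the real work to lie: $\sigma\in\J_3(R)$ of order $p$ has $\sigma(x_1)=x_1+a$ (by the same argument as above, after reducing the ``diagonal'' coefficients to $1$ using that $R$ is a field of characteristic $p$), $\sigma(x_2)=x_2+g_2(x_1)$ with $g_2$ of the form from part (ii), and $\sigma(x_3)=x_3+g_3(x_1,x_2)$ where $g_3\in R[x_1,x_2]$ must satisfy the order-$p$ condition $\sum_{i=0}^{p-1}\sigma^i(g_3)=0$. The difficulty is that $\sigma$ acts nontrivially on $x_2$, so this is no longer a pure shift and the functional equation couples the two variables. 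The plan is: first treat the case $a\ne 0$, where $B^\sigma$ contains $x_1^p$-type and $x_2$-related invariants and one can try to find $E$ over $R$ explicitly by ``integrating in $T$'' as in (i), reducing to solving a linear system over the field $R$ degree by degree in $x_1,x_2$ — here one uses that $\tau-\id$ has the image described in (ii) in each variable and a spectral/triangular argument to invert $\Phi$ on the relevant finite-dimensional graded pieces. The case $a=0$ (so $\sigma$ fixes $x_1$) reduces, after base change to the field $R(x_1)$ or working over $R[x_1]$, to a two-variable problem $x_2\mapsto x_2+g_2$, $x_3\mapsto x_3+g_3(x_1,x_2)$ essentially of the type already handled, but care is needed because $g_2$ itself may vanish (then $\sigma$ fixes both $x_1,x_2$ and we are back to the one-variable situation in $x_3$ over $R[x_1,x_2]$) or not. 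I expect the main obstacle to be producing, in the coupled case $a\ne 0$ and $g_2\ne 0$, a consistent $h_3(x_1,x_2,T)$ satisfying the cocycle identity (A2) simultaneously with $E(x_2)=x_2+h_2$; this likely requires the field hypothesis in an essential way (to solve the linear recursions and to control denominators), and is presumably why (iii) is restricted to $R$ a field and $n=3$.
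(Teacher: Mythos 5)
Your treatment of (i) and (ii) in the main case $\sigma (x_1)=x_1+b$ with $b\ne 0$ is essentially sound, and it takes a genuinely different route from the paper: you solve the functional equation $\sum _{i=0}^{p-1}g(x_1+ib)=0$ directly over $R$ by a degree descent (the coefficient of $x_1^{d-p+1}$ in $\sum _{i=0}^{p-1}g(x_1+ib)$ is $-\binom{d}{p-1}b^{p-1}\cdot (\text{leading coefficient of }g)$, so the equation forces $p\nmid d+1$ by Lucas, whence $(d+1)^{-1}$ lies in the prime field and one can strip off $-b^{-1}\bigl(\theta (x_1+b)-\theta (x_1)\bigr)$ for a monomial $\theta $ of degree $d+1$ and recurse), whereas the paper first applies Maubach's Lemma~\ref{lem:order p triangular} over the localization $R_a=R[1/a]$ to obtain coefficients $c_i\in R_a$ and then proves the integrality $ac_i\in R$ by inspecting the coefficient of $x_1^{m-1}$ in $\sigma (x_2)-x_2$. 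Your version avoids both Maubach's lemma and the localization, which is a legitimate and arguably more self-contained alternative. There is, however, an outright error in your case analysis: in characteristic $p$ the element $p$ \emph{is} zero in $R$, so when $b=0$ the condition $\sum _{i=0}^{p-1}g(x_1)=pg(x_1)=0$ holds automatically and does not force $g=0$. The automorphism $\sigma =(x_1,x_2+g(x_1))$ with $g\ne 0$ is a perfectly good element of $\J _2(R)$ of order $p$; it is not excluded, it is simply the easy case (an elementary automorphism, hence exponential over $R[x_1]\supset R$ by Example~\ref{ex:Ga-action on R[x]}). As written, your argument silently fails to cover it.

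The serious gap is in (iii), where you have inverted the difficulty. Your proposed direct attack on the coupled equation for $g_3(x_1,x_2)$ --- ``a spectral/triangular argument to invert $\Phi $ on the relevant finite-dimensional graded pieces'' --- is not a proof: no cocycle $h_3$ satisfying (A2) is constructed, and producing such a cocycle with coefficients in $R$ rather than in $Q(R)$ is exactly where all the difficulty of this subject is concentrated. The missing idea is Maubach's conjugation lemma (Lemma~\ref{lem:order p triangular}): if $a:=\sigma (x_1)-x_1$ lies in $R^*$ --- which, $R$ being a field, is automatic once $a\ne 0$ --- then $\sigma =(x_1+a,x_2,x_3)^{\phi }$ for some triangular $\phi $, so $\sigma $ is a conjugate of a translation and is exponential over $R$ with no further work. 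If instead $a=0$, then $\sigma $ fixes $x_1$ and may be regarded as an element of $\J _2(R[x_1])$ of order $p$; part (i), applied with the ring $R[x_1]$ in place of $R$ (note that (i) does not require the base ring to be a field), gives exponentiality over $R[x_1]\supset R$. That is the entire proof of (iii); the coupled case $a\ne 0$, $g_2\ne 0$ that you single out as the hard core never has to be confronted head-on, precisely because over a field a nonzero $a$ is a unit.
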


We note that (iii) is a consequence of (i) 
for the following reason: 
If $\sigma \in \J _n(R)$ has order $p$, 
then $\sigma (x_1)$ is in $x_1+R$  
(cf.~Remark~\ref{rem:strict triangular} (i)). 
If moreover $\sigma (x_1)$ is in $x_1+R^*$, 
then $\sigma $ is conjugate to an elementary automorphism 
by Maubach~\cite[Lemma 3.10]{Maubach} 
(cf.~Lemma~\ref{lem:order p triangular}), 
and so exponential over $R$. 
We may regard $\sigma \in \J _n(R)$ as an element of 
$\J _{n-r}(R[x_1,\ldots ,x_r])$ 
if $\sigma (x_i)=x_i$ for $i=1,\ldots ,r$.

The automorphism described in (ii) 
is a special case of the {\it Nagata type automorphism}. 
For this type of automorphism, 
the invariant ring was studied 
in detail 
in \cite{ch order}. 
When $R$ is a UFD, 
Theorem 1.2 of \cite{ch order} asserts that 
for $\sigma $ in (ii), 
the $R$-algebra 
$R[x_1,x_2]^{\sigma }$ 
is generated by at most three elements, 
and $R[x_1,x_2]^{\sigma }\simeq _RR[x_1,x_2]$ 
if and only if the ideal $(a,d\theta (x_1)/dx_1)$ 
of $R[x_1,x_2]$ is principal.

We mention that 
Tanimoto~\cite{Tani,Tani2,Tani3} studied triangular automorphisms 
of order $p$ over a field $k$ with $p>0$. 
In \cite{Tani}, 
he described all $\sigma \in \J _3(k)$ of order $p$, 
and showed that their invariant rings are 
generated by at most four elements over $k$. 
However, 
the exponentiality of $\sigma $ is not clear from his result. 
In fact, 
in the case where 
$\sigma $ lies in $\J _2(k[x_1])\sm \J _1(k[x_1,x_2])$, 
his description of $\sigma $ essentially differs from 
that in Theorem~\ref{thm:main} (ii) 
(see (II) of Theorems 3.1 in \cite{Tani}).

\subsection*{2$^\circ $}\label{subsect:intro2}

We construct elements of $\Aut _R\Rx $ of order $p$ 
which are not exponential over $R$ 
for each $n\ge 2$. 
Here, $R$ is any UFD with $p>0$ which is not a field. 
As far as we know, 
this is the first counterexample to Problem~\ref{prob:main}.

To be more precise, 
let us introduce the following notions. 
Here, 
for a subring $A$ of $B$ and $r\in B$, 
we write $A[r]=A^{[1]}$ if $r$ is transcendental over $A$.

\begin{definition}\label{defn:translation}\rm 
Let $\sigma \in \Aut B$. 

\nd{\rm (i)} 
We call $\sigma $ a {\it translation} of $B$ if 
there exists a subring $A$ of $B$ and $r\in B$ 
such that $B=A[r]=A^{[1]}$, 
$\sigma \in \Aut _AA[r]$ and $\sigma (r)\in r+A$. 
If we can choose $A$ so that $R\subset A$, 
then we call $\sigma $ a translation of $B$ {\it over} $R$.

\nd{\rm (ii)} 
We call $\sigma $ a {\it quasi-translation} of $B$ ({\it over} $R$) 
if there exists a multiplicative set $S$ of $B^{\sigma }$ 
such that the unique extension of $\sigma $ to $B_S$ is 
a translation of $B_S$ (over $R$). 
If this is the case, 
we also call $\sigma $ an $S$-{\it quasi-translation} of $B$ 
({\it over} $R$). 
\end{definition}

We denote by $Q(R)$ the field of fractions of $R$.

\begin{example}[Generic elementary automorphism]
\label{ex:gea}\rm 
Let $\sigma \in \Aut _R\Rx $ and $k:=Q(R)$. 
If there exist $y_1,\ldots ,y_n\in \kx $ 
for which $\kx =k[y_1,\ldots ,y_n]$ 
and the extension $\widetilde{\sigma }\in \Aut _k\kx $ of $\sigma $ 
satisfies $\widetilde{\sigma }(y_1)-y_1\in A:=k[y_2,\ldots ,y_n]$ 
and $\widetilde{\sigma }\in \Aut _AA[y_1]$, 
then $\sigma $ is an ($R\sm \zs $)-quasi-translation of $\Rx $ 
over $R$. 
We call such $\sigma $ a {\it generic elementary automorphism} of $\Rx $. 
\end{example}

We remark that the following implications hold 
for $\id \ne \sigma \in \Aut B$. 
\begin{equation}\label{eq:ABC}
\begin{gathered}
\text{translation over $R$}
\ \stackrel{\text{(A)}}{\Longrightarrow}\ 
\text{exponential over $R$}
\ \stackrel{\text{(B)}}{\Longrightarrow}\ 
\text{quasi-translation over $R$},\\
\text{quasi-translation}\ \stackrel{\text{(C)}}{\Longrightarrow}\ 
\langle \sigma \rangle \simeq \Z /p\Z.
\end{gathered}
\end{equation}
Here, 
(A) is due to Example~\ref{ex:Ga-action on R[x]}, 
and (C) is obvious. 
See \S \ref{sect:Ga-action}, Remark~\ref{rem:local} (v) for (B). 
The non-exponential automorphisms of $\Rx $ of order $p$ 
given in this paper are generic elementary automorphisms, 
and hence quasi-translations. 
Thus, our result implies that the converse of (B) 
is false if $B=\Rx $ and $p>0$.

It is interesting to note that if $p=0$, 
then the converse of (B) is true for any $B$ 
(cf.~Remark~\ref{rem:quasi-translation} (i)). 
In both cases $p=0$ and $p>0$, 
we can easily find an example of $B$ and $\sigma \in \Aut B$ 
for which the converse of (C) fails 
(cf.~Remark~\ref{rem:quasi-translation} (ii)). 
However, 
we do not know if 
such an example exists 
in the case where $\sigma \in \Aut _R\Rx $ and $p>0$ 
(cf.~Question~\ref{q:converse of C}).

\subsection*{3$^\circ $}\label{subsect:intro3}

Assume that $\sigma \ne \id $ in Definition~\ref{defn:translation} (i). 
If $p=0$, 
then $A$ is uniquely determined by $\sigma $, 
since $B^{\sigma }=A$ by Example~\ref{ex:Ga-action on R[x]} (iii). 
In contrast, if $p>0$, then 
$A$ may not be uniquely determined by $\sigma $. 
For example, 
consider $\sigma \in \Aut _{R[x_2]}R[x_1,x_2]$ 
defined by $\sigma (x_1)=x_1+1$. 
Then, 
the condition of Definition~\ref{defn:translation} (i) is satisfied for 
$$
(A,r)=(R[x_2],x_1),(R[x_1^p-x_1+x_2],x_1),(R[(x_1^p+x_2)^p-x_1],x_1^p+x_2),
\ldots . 
$$
In view of Example~\ref{ex:Ga-action on R[x]}, 
this shows that there exist various $\Ga $-actions on $R[x_1,x_2]$ 
inducing $\sigma $. 
In general, 
the $\Ga $-actions inducing an exponential automorphism are not unique 
if $p>0$, 
which makes Problem~\ref{prob:main} difficult. 
The third main contribution of this paper is to 
study the $\Ga $-actions inducing 
an elementary automorphism of $\kx $, 
where $k$ is a field with $p>0$. 
We obtain a precise description 
of such $\Ga $-actions for $n=2$. 
As an application, 
we describe a necessary and sufficient condition 
for an element of $\Aut _RR[x_1,x_2]$ of order $p$ 
to be exponential over $R$, 
where $R$ is any UFD with $p>0$. 
We also consider the case where $n\ge 3$.

\bigskip

This paper is organized as follows. 
First, 
we prove Theorem~\ref{thm:main} in Section~\ref{sect:triangular}. 
Section~\ref{sect:Ga-action} is devoted to recalling 
facts about $\Ga $-actions and polynomial automorphisms. 
In Section~\ref{sect:non-exp}, 
we construct a family of non-exponential generic elementary automorphisms. 
We present the main result $3^\circ $ in 
Sections~\ref{sect:center} and \ref{sect:set of actions}. 
We conclude this paper with a list of questions.

\section*{Notation and convention} 

Unless otherwise stated, 
the characteristic $p$ is positive 
except in Sections~\ref{sect:intro} and \ref{sect:Ga-action}, 
$R$ and $B$ are {\it rings}, 
i.e., 
integral domains containing fields, 
and $T$, $x$, $y$, and $\x =x_1,\ldots ,x_n$ 
are variables. 
We sometimes assume that $R$ is a subring of $B$.

\subsection{Two-line notation}
If $y_1,\ldots ,y_n\in \Rx $ 
satisfy $R[y_1,\ldots ,y_n]=\Rx $, 
then for any $R$-algebra $B$ and $a_1,\ldots ,a_n\in B$, 
there exists a unique homomorphism $\phi :\Rx \to B$ 
of $R$-algebras 
with $\phi (y_i)=a_i$ for $i=1,\ldots ,n$. 
We write this $\phi $ as 
$$\begin{pmatrix}
y_1& \cdots & y_n\\
a_1& \cdots & a_n
\end{pmatrix}$$ 
if no confusion arises. 
We simply write $(a_1,\ldots ,a_n):=
\left(\begin{smallmatrix}
x_1& \cdots & x_n\\
a_1& \cdots & a_n
\end{smallmatrix}\right)$. 
In this notation, 
we have $\phi \psi =(\phi (g_1),\ldots ,\phi (g_n))$ 
for $\phi ,\psi =(g_1,\ldots ,g_n)\in \Aut _R\Rx $.

\begin{example}\label{ex:matrix notation}\rm
For $a\in R\sm \zs $ and $f(x_1)\in R[x_1]$, we define 
$$E=
\begin{pmatrix}
x_1 & x_2+f(x_1) \\
x_1+aT & x_2+f(x_1) 
\end{pmatrix}
:R[x_1,x_2]\to R[x_1,x_2][T]. 
$$
Then, 
we have 
$E(x_1)=x_1+aT$ and $E(x_2+f(x_1))=x_2+f(x_1)$. 
Since $E(x_2+f(x_1))=E(x_2)+E(f(x_1))=E(x_2)+f(x_1+aT)$, 
we see that 
$E(x_2)=x_2+f(x_1)-f(x_1+aT)$. 
By Example~\ref{ex:Ga-action on R[x]}, 
this $E$ is a $\Ga $-action on $R[x_1,x_2]=R[x_2+f(x_1)][x_1]$ 
over $R[x_2+f(x_1)]$ with $R[x_1,x_2]^E=R[x_2+f(x_1)]$. 
\end{example}

\subsection{Conjugate}
For $\phi ,\psi \in \Aut B$, 
we write $\phi ^{\psi }:=\psi \circ \phi \circ \psi ^{-1}$.

\subsection{Restriction and extension}\label{subsect:extension}
Let $\sigma \in \Aut B$ and let $E$ be a $\Ga $-action on $B$. 

\nd (1) Let $B'$ be a subring of $B$. 

\renewcommand{\theenumi}{\roman{enumi}}

\nd 
\begin{enumerate}[leftmargin=8mm]

\item 
If $\sigma ^{\pm 1}(B')\subset B'$, 
then $\sigma $ {\it restricts to} $B'$, 
i.e., 
$\sigma $ induces an element of $\Aut B'$. 
When $\sigma $ has finite order, 
$\sigma $ restricts to $B'$ if and only if $\sigma (B')\subset B'$.

\item 
If $E(B')\subset B'[T]$, 
then $E$ {\it restricts to} $B'$, i.e., 
$E$ induces a $\Ga $-action on $B'$. 

\end{enumerate}

\nd 
(2) Let $S$ be a multiplicative set of $B$. 

\begin{enumerate}[leftmargin=8mm]

\item 
If $S\subset B^{\sigma }$, 
then $\sigma $ uniquely extends to an element of $\Aut B_S$.

\item 
If $S\subset B^E$, 
then $E$ uniquely extends to a $\Ga $-action $\widetilde{E}$ on $B_S$ 
with 
$B_S^E:=(B_S)^{\widetilde{E}}=(B^E)_S$. 
By uniqueness, 
$\widetilde{E}_{\alpha }$ equals the extension of $E_{\alpha }$ 
to $B_S$ 
for each $\alpha \in B^E$. 

\end{enumerate}

\nd 
(3) 
We sometimes denote the restriction and extension 
of $\sigma $ (resp.~$E$) 
by the same symbol $\sigma $ (resp.~$E$) 
if no confusion arises.

\section{Triangular automorphisms of order $p$}\label{sect:triangular}
\setcounter{equation}{0}

In this section, 
we prove Theorem~\ref{thm:main}. 
First, we recall some basic facts. 
Let $\J _n^\circ (R)$ be the set of $\sigma \in \J _n(R)$ 
such that $\sigma (x_i)\in x_i+R[x_1,\ldots ,x_{i-1}]$ for $i=1,\ldots ,n$.

\begin{rem}\label{rem:strict triangular}\rm 
(i) If $\sigma \in \J _n(R)$ has order $p$, 
then $\sigma $ belongs to $\J _n^\circ (R)$. 
Indeed, 
if $\sigma (x_i)\in a_ix_i+R[x_1,\ldots ,x_{i-1}]$ 
with $a_i\in R^*$, 
then $\sigma ^p=\id $ gives $a_i^p=1$, 
so $a_i=1$.

\nd (ii) Observe that $\Aut _RR[x_1]=\J _1(R)$. 
Hence, if $\sigma \in \Aut _RR[x_1]$ has order $p$, 
then $\sigma $ is in $\J _1^{\circ }(R)$ by (i). 
This implies that $\sigma $ is exponential over $R$ 
by Example~\ref{ex:Ga-action on R[x]}.

\nd (iii) 
Let $a\in R\sm \zs $ and $\phi =(f_1,\ldots ,f_n)\in \J _n(R)$. 
Then, 
\begin{equation}\label{eq:triangular conjugate}
\sigma :=\begin{pmatrix}
f_1&f_2&\cdots & f_n\\
f_1+a&f_2&\cdots &f_n
\end{pmatrix}=(x_1+a,x_2,\ldots ,x_n)^\phi 
\in \Aut _R\Rx 
\end{equation}
is triangular and has order $p$. 
Moreover, 
$\sigma $ is exponential over $R$, 
since so is $(x_1+a,x_2,\ldots ,x_n)$ 
by Example~\ref{ex:Ga-action on R[x]}. 
\end{rem}

The following lemma 
is due to Maubach~\cite[Lemma 3.10]{Maubach}.

\begin{lem}[Maubach]\label{lem:order p triangular}
Let $\sigma \in \J _n(R)$ be of order $p$. 
If $a:=\sigma (x_1)-x_1$ is in $R^*$, 
then there exists 
$\phi =(x_1,f_2,\ldots ,f_n)\in \J ^\circ _n(R)$ 
such that 
$\sigma =(x_1+a,x_2,\ldots ,x_n)^{\phi }$. 
\end{lem}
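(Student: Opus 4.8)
The plan is to induct on $n$, peeling off the variables $x_2,\dots,x_n$ one at a time and arranging at each stage a triangular coordinate change that makes $\sigma$ act by adding the unit $a$ to the first variable and fixing the rest. Since $\sigma\in\J_n(R)$ has order $p$, Remark~\ref{rem:strict triangular}(i) gives $\sigma\in\J_n^\circ(R)$, so $\sigma(x_1)=x_1+a$ with $a=\sigma(x_1)-x_1\in R^*$ by hypothesis, and $\sigma(x_i)\in x_i+R[x_1,\dots,x_{i-1}]$ for $i\ge 2$. The base case $n=1$ is trivial: take $\phi=\id$. For the inductive step, I would write $\sigma(x_2)=x_2+g(x_1)$ with $g(x_1)\in R[x_1]$. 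The key computation is that $\sigma^p=\id$ forces $\sum_{j=0}^{p-1} g(x_1+ja)=0$; writing $h(x_1)$ for an $R[x_1]$-antiderivative-type solution of the difference equation $h(x_1+a)-h(x_1)=a^{-1}g(x_1)\cdot(\text{something})$ — more precisely, I want $f_2:=x_2+c(x_1)$ with $c(x_1)\in R[x_1]$ chosen so that $\sigma(f_2)=f_2$, i.e. $c(x_1+a)-c(x_1)=g(x_1)$.

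The crux is therefore to solve the linear difference equation $c(x_1+a)-c(x_1)=g(x_1)$ for $c(x_1)\in R[x_1]$, given that the necessary cyclic-sum obstruction $\sum_{j=0}^{p-1}g(x_1+ja)=0$ holds. Since $a\in R^*$, the operator $c(x_1)\mapsto c(x_1+a)-c(x_1)$ on $R[x_1]$ lowers degree by exactly one on any polynomial whose leading term is $e x_1^m$ with $p\nmid m$, while it kills $x_1^p-a^{p-1}x_1$ (and more generally the "additive" polynomials). Thus its image is a direct complement inside $R[x_1]$ to the subspace $\bigoplus_j R x_1^{p^j}$-type span; the condition that the $p$-fold cyclic sum of $g$ vanishes is exactly what places $g$ in this image, and one recovers $c$ degree by degree by an explicit descent (this is the same bookkeeping that underlies Theorem~\ref{thm:main}(ii), the Nagata-type normal form). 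This step — verifying that the cyclic-sum vanishing is both necessary (from $\sigma^p=\id$) and sufficient (for solvability of the difference equation over $R[x_1]$, using $a\in R^*$) — is the main obstacle; everything else is formal.

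Once $c(x_1)$ is found, set $\phi_2:=(x_1,x_2+c(x_1),x_3,\dots,x_n)\in\J_n^\circ(R)$; then $\sigma^{\phi_2^{-1}}$ fixes $x_1\mapsto x_1+a$ and $x_2\mapsto x_2$, and still lies in $\J_n^\circ(R)$ of order $p$, with $x_1$ now playing an inert role for the remaining variables. Viewing $\sigma^{\phi_2^{-1}}$ as an element of $\J_{n-2}\bigl(R[x_1,x_2]\bigr)$ acting trivially on the ground ring's extra variable $x_2$ — or more simply, iterating the same argument on $x_3,\dots,x_n$ with the difference operator now $f\mapsto \sigma(f)-f$ and the same $a$ — I would produce successive $\phi_3,\dots,\phi_n$, each triangular and unipotent in the strict sense, so that the composite $\phi:=\phi_2\phi_3\cdots\phi_n\in\J_n^\circ(R)$ conjugates $(x_1+a,x_2,\dots,x_n)$ to $\sigma$. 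Finally I would check that $\phi$, built as a composite of strictly triangular maps, is itself in $\J_n^\circ(R)$ (it has the form $(x_1,f_2,\dots,f_n)$ with $f_i\in x_i+R[x_1,\dots,x_{i-1}]$ since each $\phi_i$ fixes $x_1,\dots,x_{i-1}$), which is the asserted conclusion.
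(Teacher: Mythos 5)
The paper does not actually prove this lemma: it is quoted from Maubach \cite[Lemma 3.10]{Maubach} with no argument supplied, so there is no internal proof to measure you against. On its own terms, your plan is correct and is the natural argument: produce $\sigma$-invariant coordinates $f_i=x_i+c_i(x_1,\ldots,x_{i-1})$ one at a time, so that $\phi=(x_1,f_2,\ldots,f_n)\in\J_n^\circ(R)$ satisfies $\phi^{-1}\sigma\phi=(x_1+a,x_2,\ldots,x_n)$. The reduction to the difference equation, the extraction of the necessary condition $\sum_{j=0}^{p-1}\sigma^j(g)=0$ from $\sigma^p=\id$, and the inductive step (replacing the base ring $R$ by $R[f_2,\ldots,f_{i-1}]$, over which $R[x_1,\ldots,x_{i-1}]$ is again a polynomial ring in $x_1$ on which $\sigma$ acts by $x_1\mapsto x_1+a$) are all sound.

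The one place you should tighten is the solvability of $\Delta(c):=c(x_1+a)-c(x_1)=-g$ given the cyclic-sum condition. Your description of $\operatorname{im}\Delta$ as a complement to a span of $p$-power monomials is not right; the clean statement is this. By Lemma~\ref{lem:invariant ring}, $R[x_1]$ is a free module over $A:=R[x_1^p-a^{p-1}x_1]$ with basis $1,x_1,\ldots,x_1^{p-1}$, and $\Delta$ is $A$-linear with $\Delta(x_1^{j})=jax_1^{j-1}+(\text{lower order})$ for $1\le j\le p-1$ and $\Delta(1)=0$. Since $a\in R^*$ (this is exactly where the hypothesis enters), $\Delta$ carries $\bigoplus_{j=1}^{p-1}Ax_1^j$ isomorphically onto $\bigoplus_{j=0}^{p-2}Ax_1^j$, so $\operatorname{im}\Delta=\bigoplus_{j=0}^{p-2}Ax_1^j$; meanwhile $N:=\sum_{j=0}^{p-1}\sigma^j=\Delta^{p-1}$ kills $x_1^j$ for $j\le p-2$ and sends $x_1^{p-1}$ to $(p-1)!\,a^{p-1}\in R^*$, so $\ker N=\bigoplus_{j=0}^{p-2}Ax_1^j=\operatorname{im}\Delta$. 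Hence the obstruction you identified is indeed exactly the solvability condition, and your degree-by-degree descent also works as stated, because $N(g)=0$ forces $\deg g\not\equiv -1\pmod p$ at every stage, so one can always subtract $\Delta\bigl(\tfrac{e}{(m+1)a}x_1^{m+1}\bigr)$ to lower the degree. (Also note the sign: the equation you need is $c(x_1+a)-c(x_1)=-g(x_1)$, harmless.) With that step made precise, the proof is complete.
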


The following lemma is well known 
(cf.\ e.g., \cite[Lemma 2.1]{ch order}). 
 
\begin{lem}\label{lem:invariant ring}
For $a\in R\sm \zs $, 
we define $\tau \in \Aut _RR[x]$ by $\tau (x)=x+a$. 
Then, we have 
$R[x]^{\tau }:=\{ f\in R[x]\mid \tau (f)=f\} =R[x^p-a^{p-1}x]$. 
\end{lem}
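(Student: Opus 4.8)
The plan is to verify the two inclusions $R[x]^\tau \supseteq R[x^p - a^{p-1}x]$ and $R[x]^\tau \subseteq R[x^p - a^{p-1}x]$ separately, working in characteristic $p > 0$ throughout.

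For the easy inclusion, I would first check that $u := x^p - a^{p-1}x$ is fixed by $\tau$. Indeed, $\tau(u) = (x+a)^p - a^{p-1}(x+a) = x^p + a^p - a^{p-1}x - a^p = x^p - a^{p-1}x = u$, using that $(x+a)^p = x^p + a^p$ in characteristic $p$. Since $\tau$ is an $R$-algebra automorphism and $\tau(u) = u$, every polynomial in $u$ with coefficients in $R$ is fixed, giving $R[u] \subseteq R[x]^\tau$.

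For the reverse inclusion, I would argue by degree. Suppose $f \in R[x]^\tau$. Observe that $R[x]$ is a free $R[u]$-module with basis $1, x, x^2, \dots, x^{p-1}$, because $x$ satisfies the monic degree-$p$ equation $x^p - a^{p-1}x - u = 0$ over $R[u]$ and $R$ is a domain (so $a$ is a nonzerodivisor). Hence I can write $f = \sum_{i=0}^{p-1} g_i(u) x^i$ uniquely with $g_i \in R[u]$. Applying $\tau$, which fixes $u$ and sends $x \mapsto x + a$, and using uniqueness of this representation after re-expanding $\tau(f) = \sum_i g_i(u)(x+a)^i$ in the basis $1, x, \dots, x^{p-1}$, the condition $\tau(f) = f$ forces each $g_i$ with $i \geq 1$ to vanish: looking at the top index $i_0$ for which $g_{i_0} \neq 0$, the coefficient of $x^{i_0}$ in $\tau(f) - f$ is $i_0 \, a \, g_{i_0}(u)$ plus nothing else of that degree, and since $1 \le i_0 \le p-1$ we have $i_0 a \ne 0$ in the domain $R$, a contradiction unless $i_0 = 0$. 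Thus $f = g_0(u) \in R[u]$.

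The main obstacle, such as it is, is setting up the free-module structure cleanly and being careful that the coefficient extraction in the basis $\{x^i\}_{i<p}$ really does isolate the top term — one should note that multiplying an element of $R[u]$ by $x^{i_0}$ and then expanding powers $x^p = u + a^{p-1}x$ only lowers the $x$-degree, so no cancellation from below can interfere with the leading coefficient argument. An alternative, essentially equivalent route is to pass to $Q(R)[x]$, use that $\tau$ generates a cyclic group of order $p$ acting on $Q(R)[x]$ with $Q(R)[x]$ integral of degree $p$ over the invariant subring (which must then equal $Q(R)[u]$ by degree count), and then intersect back with $R[x]$; but the direct module argument avoids any appeal to invariant theory and is self-contained, so I would present that.
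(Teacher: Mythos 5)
The paper offers no proof of this lemma at all: it is stated as ``well known'' with a pointer to \cite[Lemma 2.1]{ch order}, so there is no in-paper argument to compare against, and a self-contained proof like yours is a genuine addition. Your argument is correct. The forward inclusion via $(x+a)^p=x^p+a^p$ in characteristic $p$ is fine, and the reverse inclusion via the free $R[u]$-module structure of $R[x]$ with basis $1,x,\ldots,x^{p-1}$ works as you describe. One indexing slip to fix: the quantity $i_0\,a\,g_{i_0}(u)$ you exhibit is the coefficient of $x^{i_0-1}$ in $\tau(f)-f$, not of $x^{i_0}$ (the coefficient of $x^{i_0}$ itself vanishes, since $g_i=0$ for $i>i_0$); the identity $\binom{i_0}{i_0-1}a=i_0a$ and the conclusion that $i_0a\,g_{i_0}(u)=0$ forces $g_{i_0}=0$ in the domain $R[u]$ are exactly right, so the slip is cosmetic. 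You could also streamline the closing caveat: since every $(x+a)^i$ with $i\le p-1$ already lies in the $R$-span of $1,\ldots,x^{p-1}$, no reduction by $x^p=u+a^{p-1}x$ is ever invoked, so the worry about ``cancellation from below'' does not arise in the first place. Finally, the linear independence of $1,x,\ldots,x^{p-1}$ over $R[u]$ is most cleanly seen by noting that $\deg_x\bigl(g_i(u)x^i\bigr)\equiv i\pmod p$, so the leading terms of distinct summands cannot cancel; this is slightly more direct than the appeal to $a$ being a nonzerodivisor.
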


\begin{rem}\label{rem:invariant ring}\rm 
For each $f\in R[x]$ and $a\in R$, 
there exists $h\in f+R[x^p-a^{p-1}x]$ of the form 
$h=\sum _{p\nmid i}c_ix^i$ 
with $c_i\in R$, 
since $x^p-a^{p-1}x$ is a monic polynomial of degree $p$. 
\end{rem}

Now, we prove Theorem~\ref{thm:main} (i). 
First, we sketch the idea of the proof. 
In view of Remark~\ref{rem:strict triangular} (i) and (ii), 
we may assume that $a:=\sigma (x_1)-x_1\in R\sm \zs $. 
Set $R_a:=R[1/a]$ and regard $\sigma \in \J _2(R)\subset \J _2(R_a)$. 
Then, 
by Lemma~\ref{lem:order p triangular}, 
there exists $f\in R_a[x_1]$ such that 
$\sigma =(x_1+a,x_2)^{(x_1,x_2+f)}$. 
Now, 
pick any $g\in R_a[x_1^p-a^{p-1}x_1]$. 
Since $\sigma (g)=g$ by Lemma~\ref{lem:invariant ring}, 
we notice that 
\begin{equation}\label{eq:notice}
\sigma 
=\begin{pmatrix}
x_1   & x_2+f \\
x_1+a & x_2+f 
\end{pmatrix}
=\begin{pmatrix}
x_1   & x_2+f+g \\
x_1+a & x_2+f+g 
\end{pmatrix}. 
\end{equation}
Hence, 
the $\Ga $-action 
$$
E^g:=\begin{pmatrix}
x_1   & x_2+f+g \\
x_1+aT & x_2+f+g 
\end{pmatrix}:R_a[x_1,x_2]\to R_a[x_1,x_2][T]
$$
satisfies $E^g_1=\sigma $. 
We emphasize that $E^g$ may not restrict to $R[x_1,x_2]$, 
and that $E^g$ may change if we replace $g$. 
Our plan is to 
find $g$ such that $E^g$ restricts to $R[x_1,x_2]$, 
which proves that $\sigma $ is exponential on $R[x_1,x_2]$. 
We remark that $E^g(x_1)=x_1+aT\in R[x_1,x_2][T]$. 
Hence, 
$E^g$ restricts to $R[x_1,x_2]$ 
if and only if $E^g(x_2)\in R[x_1,x_2][T]$.

\begin{proof}[Proof of Theorem~{\rm \ref{thm:main}}]
(i) 
Choose $g\in R_a[x_1^p-a^{p-1}x_1]$ 
so that $f+g=\sum _{p\nmid i}c_ix_1^i$, 
where $c_i\in R_a$ (cf.~Remark~\ref{rem:invariant ring}). 
We show that $E^g(x_2)\in R[x_1,x_2][T]$. 
By Example~\ref{ex:matrix notation}, 
we can write 
$E^g(x_2)=x_2+\sum _{p\nmid i}f_i(T)$, 
where 
$$
f_i(T):=c_i\bigl( x_1^i-(x_1+aT)^i\bigr)
=-ic_iax_1^{i-1}T-\binom{i}{2}c_ia^2x_1^{i-2}T^2-\cdots -c_ia^iT^i. 
$$
Then, 
the following statements hold: 
First of all, 
since $E^g_1=\sigma \in \J _2(R)$, we have 

\nd 
(a) $x_2+\sum _{p\nmid i}f_i(1)=E_1^g(x_2)=\sigma (x_2)
\in R[x_1,x_2]$. 

Since $a\in R$, 
the following implications hold for each $i$: 

\nd 
(b) 
$c_ia\in R\Rightarrow c_ia^l\in R$ 
for all $l\ge 1\Rightarrow f_i(T)\in R[x_1,T]$. 

By (b), it suffices to show that $c_ia\in R$ for all $i$. 
Suppose that $c_ia\not\in R$ for some $i$, 
and set $m:=\max \{ i\mid c_ia\not\in R\} $. 
Then, 
we have 

\nd 
(c) $i>m\Rightarrow c_ia\in R\Rightarrow f_i(T)\in R[x_1,T]
\Rightarrow f_i(1)\in R[x_1]$. 

\nd 
(d) The coefficient $-mc_ma$ of $x_1^{m-1}$ in $f_m(1)$ 
does not belong to $R$, since $p\nmid m$. 

\nd 
(e) $i<m\Rightarrow \deg f_i(1)<m-1\Rightarrow $ 
the monomial $x_1^{m-1}$ does not appear in $f_i(1)$. 

From (c), (d) and (e), 
we see that 
the coefficient of $x_1^{m-1}$ in $\sum _{p\nmid i}f_i(1)$ 
does not belong to $R$. 
This contradicts (a). 

(ii) Since $\sigma (x_1)\ne x_1$, 
we have $a:=\sigma (x_1)-x_1\in R\sm \zs $ 
by Remark~\ref{rem:strict triangular} (i). 
Then, in the notation above, 
$\theta (x_1):=\sum _{p\nmid i}ac_ix_1^i$ 
belongs to $\sum _{p\nmid i}Rx_1^i$. 
Moreover, 
we have 
$\sigma (x_2)=x_2+\sum _{p\nmid i}f_i(1)
=x_2+a^{-1}(\theta (x_1)-\theta (x_1+a))$. 
\end{proof}

We can generalize the construction 
in the proof of Theorem~\ref{thm:main} (i) 
to the case $n\ge 3$, 
but it does not give the desired result: 
For simplicity, let $n=3$. 
Pick $\sigma \in \J _3(R)$ with $\sigma ^p=\id $ 
and $a:=\sigma (x_1)-x_1\ne 0$. 
Then, 
by Lemma~\ref{lem:order p triangular}, 
there exist $\lambda \in R_a[x_1]$ and $\mu \in R_a[x_1,x_2]$ such that 
$\sigma =\begin{psmallmatrix*}
x_1   & x_2+\lambda & x_3+\mu \\
x_1+a & x_2+\lambda & x_3+\mu \\
\end{psmallmatrix*}$ 
holds in $\J _3(R_a)$. 
As above, 
we may choose $\lambda $ from $\sum _{p\nmid i}R_ax_1^i$. 
Since 
$\mu \in R_a[x_1,x_2]=R_a[x_2+\lambda ][x_1]$ and 
$R_a[x_2+\lambda ][x_1]^\sigma 
=R_a[x_2+\lambda ][x_1^p-a^{p-1}x_1]$ 
by Lemma~\ref{lem:invariant ring}, 
we may also assume that 
$\mu \in \sum _{p\nmid i}R_a[x_2+\lambda ]x_1^i$ 
by replacing $\mu $ with an element of 
$\mu +R_a[x_2+\lambda ][x_1^p-a^{p-1}x_1]$. 
In this situation, 
the $\Ga $-action 
$E:=\begin{psmallmatrix*}
x_1   & x_2+\lambda & x_3+\mu \\
x_1+aT & x_2+\lambda & x_3+\mu \\
\end{psmallmatrix*}$ 
on $R_a[\x ]$ satisfies $E(x_2)\in \Rx [T]$ as shown above. 
However, 
$E(x_3)$ does not always belong to $\Rx [T]$.

In other words, 
there exist 
$a\in R\sm \zs $, 
$\lambda \in \sum _{p\nmid i}R_ax_1^i$ 
and $\mu \in \sum _{p\nmid i}R_a[x_2+\lambda ]x_1^i$ 
for which the $\Ga $-action 
$E:=\begin{psmallmatrix*}
x_1   & x_2+\lambda & x_3+\mu \\
x_1+aT & x_2+\lambda & x_3+\mu \\
\end{psmallmatrix*}$ on $R_a[\x ]$ 
satisfies the following:

(i) $E_1$ restricts to $\Rx $, 
so $E_1\in \J _3(R)$ 
by Remark~\ref{rem:strict triangular} (iii). 

(ii) $E(x_3)\not\in \Rx [T]$.

\nd 
We construct such $a$, $\lambda $ and $\mu $ below.


\begin{example}\label{ex:triangular}\rm
Assume that $R$ is not a field. 
Pick any $a\in R\sm (R^*\cup \zs )$, 
and set $\lambda :=a^{-1}x_1^{p+1}$, 
$\mu :=a^{-1}(x_1^{p+1}x_2^p-x_1^{p^2+1}x_2)$ 
and $f_2:=x_2+\lambda $. 
Note that if $p\ge 3$, 
then 
$\mu =a^{-1}\bigl(x_1^{p+1}(f_2^p-\lambda ^p)
-\underline{x_1^{p^2+1}}(f_2-\underline{\lambda })\bigr) $ 
lies in 
$\sum _{p\nmid i}R_a[f_2]x_1^i$.

To see (i) and (ii) above, 
it suffices to verify the following:

\quad 
(iii) $E(x_2)\in \Rx [T]$. \quad 
(iv) $E(x_3)\in a^{-1}x_1^{1+p+p^2}(T^p-T)+\Rx [T]$.

\nd 
Note that $E(\lambda )=a^{-1}(x_1^p+a^pT^p)(x_1+aT)$. 
Hence, 
the equation $E(f_2)=f_2$ gives 
$$
E(x_2)=x_2+\lambda -E(\lambda )
=x_2-x_1^pT-a^{p-1}x_1T^p-a^pT^{p+1}\in \Rx [T], 
$$
proving (iii). 
Similarly, we have 
$E(x_3)=x_3+\mu -E(\mu )=x_3+a^{-1}(q_1+q_2)$, 
where 
\begin{align*}
q_1&:=x_1^{p+1}x_2^p-E(x_1^{p+1}x_2^p) \\
&=x_1^{p+1}x_2^p
-(\underline{x_1^p}+a^pT^p)(\underline{x_1}+aT)
(x_2^p-\underline{x_1^{p^2}T^p}-a^{(p-1)p}x_1^pT^{p^2}-a^{p^2}T^{(p+1)p}),\\
q_2&:=E(x_1^{p^2+1}x_2)-x_1^{p^2+1}x_2 \\
&=(\underline{x_1^{p^2}}+a^{p^2}T^{p^2})(\underline{x_1}+aT)
(x_2-\underline{x_1^pT}-a^{p-1}x_1T^p-a^pT^{p+1})
-x_1^{p^2+1}x_2. 
\end{align*}
Since $q_1\equiv x_1^{1+p+p^2}T^p$ 
and $q_2\equiv -x_1^{1+p+p^2}T$ modulo $(a)$, 
we see that (iv) holds true.

If $p=2$, then $\mu ':=\mu -a^{-2}x_1^8$ is in 
$\sum _{p\nmid i}R_a[f_2]x_1^i$. 
Set $E':=\begin{psmallmatrix*}
x_1   & f_2 & x_3+\mu ' \\
x_1+aT & f_2 & x_3+\mu ' \\
\end{psmallmatrix*}$. 
Then, 
we have $E'(x_i)=E(x_i)$ 
for $i=1,2$, 
and 
$$
E'(x_3)=x_3+\mu '-E(\mu ')
=E(x_3)-a^{-2}(x_1^8-(x_1+aT)^8)\in E(x_3)+\Rx [T].
$$
Therefore, 
we obtain the same conclusion as above for $E'$. 

\end{example}

We do not know whether $E_1,E_1'\in \J _3(R)$ 
in Example~\ref{ex:triangular} are exponential.

\section{$\Ga $-actions and polynomial automorphisms}\label{sect:Ga-action}
\setcounter{equation}{0}

In this section, 
we assume that $p\ge 0$ unless otherwise specified. 
We recall some facts about $\Ga $-actions and polynomial automorphisms.

\begin{lem}\label{lem:R[x]}
For a $\Ga $-action $E$ on $R[x]$ over $R$, 
we set $\lambda (T):=\lambda (x,T):=E(x)-x\in R[x][T]$. 
Then, 
$\lambda (T)$ belongs to $R[T]$, 
and is {\rm additive}, i.e., 
$\lambda (T_1+T_2)=\lambda (T_1)+\lambda (T_2)$ 
holds for variables $T_1$ and $T_2$. 
\end{lem}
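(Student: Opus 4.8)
The plan is to use the coproduct axiom (A2) to pin down the shape of $\lambda (x,T)=E(x)-x$, exploiting that $E$ is a homomorphism of $R$-algebras and that $R[x]$ is a polynomial ring in the single variable $x$. First I would observe that since $E$ is an $R$-algebra homomorphism, $E$ is completely determined by $E(x)=x+\lambda (x,T)$, and that $\lambda (x,T)$ is some element of $R[x][T]$. Write $\lambda (x,T)=\sum_{i\ge 0}\lambda_i(T)x^i$ with $\lambda_i(T)\in R[T]$. The goal is to show $\lambda_i(T)=0$ for $i\ge 1$ (so $\lambda \in R[T]$) and that $\lambda_0(T)=\lambda(T)$ is additive.

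The key computation is to write out both sides of the diagram in (A2) applied to the generator $x$. Tracking $x$ around the diagram: the right-then-down composite sends $x$ first to $x+\lambda(x,T)$ and then applies $E\otimes\id$, which acts as $E$ on the coefficients (replacing each occurrence of $x$ by $x+\lambda(x,T')$, using $T'$ for the second tensor slot) while leaving $T'$ untouched in $\lambda$; the down-then-right composite sends $x$ to $x+\lambda(x,T)$ and then applies $\id\otimes\Delta$, which replaces $T$ by $T'+T''$ (renaming variables appropriately). Equating the two, and using that $E$ is a homomorphism, yields the functional equation
\begin{equation*}
\lambda\bigl(x,\,T'+T''\bigr)=\lambda(x,T')+\lambda\bigl(x+\lambda(x,T'),\,T''\bigr).
\end{equation*}
This is the ``cocycle'' identity for a $\Ga$-action on the affine line. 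From here I would extract the two assertions. For additivity it suffices to show $\lambda$ does not depend on $x$; once that is known the identity reads $\lambda(T'+T'')=\lambda(T')+\lambda(T'')$, which is exactly additivity.

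To show $\lambda(x,T)\in R[T]$, I would argue by comparing degrees in $x$. Let $d=\deg_x\lambda(x,T)$ and suppose $d\ge 1$. Looking at the right-hand side of the cocycle identity, the second term $\lambda(x+\lambda(x,T'),T'')$ has $x$-degree $d\cdot d = d^2$ generically (substituting a degree-$d$ polynomial into a degree-$d$ polynomial in the $x$-slot), while the left-hand side has $x$-degree $d$ and the first term on the right has $x$-degree $d$; since $d^2>d$ when $d\ge 2$ we must cancel the top term, forcing the leading $x$-coefficient of $\lambda$ to be annihilated upon the substitution, and careful bookkeeping of leading terms (noting $R$ is a domain) rules this out. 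For $d=1$, write $\lambda(x,T)=\alpha(T)x+\beta(T)$ with $\alpha(T)\neq 0$; the cocycle identity gives $\alpha(T'+T'')x+\beta(T'+T'')=\alpha(T')x+\beta(T')+\alpha(T'')(x+\alpha(T')x+\beta(T'))+\beta(T'')$, and comparing coefficients of $x$ yields $\alpha(T'+T'')=\alpha(T')+\alpha(T'')+\alpha(T'')\alpha(T')$, i.e. $1+\alpha(T'+T'')=(1+\alpha(T'))(1+\alpha(T''))$, so $1+\alpha(T)$ is a multiplicative one-parameter family in $R[T]$; evaluating at $T=0$ (recall $\lambda(x,0)=E_0(x)-x=0$ by (A1), so $\alpha(0)=0$) and differentiating or comparing degrees shows $1+\alpha(T)$ must be a unit of $R[T]$ of the form making $\alpha(T)$ a nonzero constant-plus-higher impossible unless $\alpha(T)\equiv 0$, contradicting $\alpha\neq 0$. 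Hence $d\le 0$, i.e. $\lambda(x,T)=\lambda(T)\in R[T]$, and then additivity follows immediately from the cocycle identity as noted.

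\textbf{Main obstacle.} The delicate point is the degree bookkeeping for $d\ge 1$: one must be careful that the naive degree bound $\deg_x\lambda(x+\lambda,T'')=d^2$ can in principle drop if leading coefficients cancel, so the argument needs the domain hypothesis on $R$ and a clean look at leading coefficients (or, alternatively, one can pass to $Q(R)$ and then to an algebraic closure, or invoke a known structure theorem for $\Ga$-actions on $R[x]$). The $d=1$ case requires the separate multiplicative-family analysis above together with the initial condition $\lambda(x,0)=0$ coming from (A1); identifying that $1+\alpha(T)$ being grouplike in $R[T]$ forces $\alpha\equiv 0$ is where the single-variable polynomial structure is essential.
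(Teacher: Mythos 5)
Your proposal is correct in substance and follows the same overall strategy as the paper: extract the cocycle identity
\begin{equation*}
\lambda(x,T_1+T_2)=\lambda(x,T_1)+\lambda\bigl(x+\lambda(x,T_1),T_2\bigr)
\end{equation*}
from (A2) and then rule out any genuine dependence on $x$ by a degree comparison, after which additivity is immediate. The difference lies in which degree you compare. The paper measures total degree in $T_1,T_2$: using (A1) to get $\lambda(x,T)\in TR[x][T]$, any coefficient $\lambda_j(x)\notin R$ (necessarily with $j\ge 1$) turns, after the substitution $x\mapsto x+\lambda(x,T_1)$, into a term of total degree at least $\deg_T\lambda+1$, which the two degree-$\deg_T\lambda$ terms in the identity cannot match; this handles all cases uniformly. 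You instead compare degrees in $x$, which works but forces the separate treatment of $d=\deg_x\lambda=1$ via the grouplike relation $1+\alpha(T_1+T_2)=(1+\alpha(T_1))(1+\alpha(T_2))$. Both of your cases do close: for $d\ge 2$ the coefficient of $x^{d^2}$ in $\lambda(x+\lambda(x,T_1),T_2)$ is $c_d(T_2)c_d(T_1)^d\ne 0$ because $R$ is a domain, so the cancellation you worry about cannot occur and the contradiction is immediate; for $d=1$, comparing total degrees in $u(T_1+T_2)=u(T_1)u(T_2)$ with $u:=1+\alpha$ gives $\deg u=2\deg u$, hence $u\in R$, and $u(0)=1$ (from $\lambda(x,0)=0$, which you correctly derive from (A1)) forces $u=1$, i.e.\ $\alpha=0$. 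So your sketch is completable as written; the paper's choice of degree is simply a little more economical, since the normalization $\lambda(x,0)=0$ makes the degree jump uniform and avoids the extra case.
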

\begin{proof}
Note that 
(i) $\lambda (x,T)\in TR[x][T]$, 
since $\lambda (x,0)=0$ by (A1), 
and (ii) 
$\lambda (x,T_1+T_2)=\lambda (x,T_1)+\lambda (x+\lambda (x,T_1),T_2)$ 
by (A2). 
If $\lambda (x,T)$ is not in $R[T]$, 
then we have $\lambda (x+\lambda (x,T_1),T_2)
\in T_2R[x+\lambda (x,T_1)][T_2]\sm R[T_2]$ by (i). 
Hence, 
the total degree of $\lambda (x+\lambda (x,T_1),T_2)$ 
in $T_1$ and $T_2$ 
is greater than 
$\deg _T\lambda (x,T)$. 
This contradicts (ii), 
so $\lambda (x,T)$ belongs to $R[T]$. 
Then, (ii) implies that $\lambda (T)$ is additive. 
\end{proof}

The following theorem is well known 
(cf.\ e.g., \cite[\S 1.4]{MiyanishiTata}).

\begin{thm}\label{prop:local}
For every nontrivial $\Ga $-action $E$ on a ring $B$, 
there exist $r\in B$ 
and $s\in B^E\sm \zs $ 
such that $B_S=B_S^E[r]=(B_S^E)^{[1]}$, 
where 
$S:=\{ s^i\mid i\ge 0\} $. 
\end{thm}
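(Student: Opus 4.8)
The plan is to prove Theorem~\ref{prop:local} by reducing it to a localization of $B^E$, after first ``straightening'' the $\Ga$-action so that the orbit of a suitable element becomes a genuine translation. Concretely, since $E$ is nontrivial there exists $r\in B$ with $E(r)\ne r$, so the additive polynomial $\lambda(r,T):=E(r)-r\in B[T]$ (using the argument of Lemma~\ref{lem:R[x]} applied to the subalgebra $B^E[r]$, which is either $B^E$ itself or $(B^E)^{[1]}$ depending on whether $r$ is algebraic over $B^E$) is nonzero. Write its leading term as $sT^q$ for some $s\in B^E\setminus\zs$; when $p=0$ this forces $q=1$ and $\lambda(r,T)=sT$, while for $p>0$ one has $q=p^e$ for some $e\ge 0$ because an additive polynomial in characteristic $p$ is a $p$-polynomial. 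The first key step is therefore to extract this $s$, invert it to pass to $B_S$ with $S=\{s^i\mid i\ge 0\}$, and observe that over $B_S^E=(B^E)_S$ the element $E(r)-r$ now has an invertible leading coefficient.

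The second step is to exhibit $r'\in B_S$ with $E(r')-r'$ equal to a nonzero constant of $B_S^E$, i.e.\ a slice. When $p=0$ this is immediate: after inverting $s$ we already have $E(r)-r=sT$ with $s$ a unit, so $r'=r/s$ satisfies $E(r')-r'=T$. When $p>0$ one must work slightly harder: $E(r)-r$ is a $p$-polynomial $c_0T+c_1T^p+\cdots+c_eT^{p^e}$ whose top coefficient $c_e=s$ is now a unit, and I would locate a constant slice by an inductive ``Newton'' correction—replacing $r$ by $r$ minus an appropriate $B^E$-linear combination of $p$-th powers built from $r$ itself to kill the higher-degree terms—using that $B^E$ is a ring and $s$ is invertible. (Alternatively, one can cite the standard structure theory for locally nilpotent/$p$-nilpotent actions; but since the excerpt develops everything by hand, the self-contained Newton argument is the cleaner route.) The upshot is a slice $r'\in B_S$ with $E(r')=r'+T$, possibly after enlarging $S$ by one more element of $B^E$.

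The third step turns the slice into the asserted polynomial decomposition. Given $E(r')=r'+T$, define the ``projection'' map $\pi:B_S\to B_S$ by $\pi(b)=E_{-r'}(b)$ (evaluating $E(b)$ at $T=-r'$); condition (A2) forces $\pi(b)\in B_S^E$ for every $b$, $\pi$ is a $B_S^E$-algebra retraction onto $B_S^E$, and $\pi(r')=0$. Then every $b\in B_S$ can be recovered from its ``Taylor expansion'': writing $E(b)=\sum_i b_iT^i$ one checks $b=\sum_i \pi(b_i)\,(r')^i$, which shows $B_S=B_S^E[r']$ and that $r'$ is transcendental over $B_S^E$ (if $P(r')=0$ for a polynomial $P$ over $B_S^E$ of minimal degree, apply $E$ and compare leading coefficients in $T$ to get a lower-degree relation). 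This gives $B_S=(B_S^E)^{[1]}$ as required, with $s$ (or the enlarged product) playing the role in the statement.

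The main obstacle is the characteristic-$p$ part of the second step: constructing the constant slice $r'$ from an additive $p$-polynomial with invertible top coefficient. The clean way is the inductive correction killing the $T^{p^e},T^{p^{e-1}},\dots$ terms one at a time; the mild subtlety is that each correction step may force a further localization at a coefficient lying in $B^E$, so one must argue that after finitely many steps $S$ can be taken to be generated by a single element $s\in B^E\setminus\zs$ (e.g.\ by taking $s$ to be the product of the finitely many coefficients that get inverted, and then re-deriving $\lambda$ over $B_S^E$ where it now has a genuinely invertible leading coefficient in one shot). Everything else—Lemma~\ref{lem:R[x]}-style additivity, the retraction $\pi$, the transcendence argument—is routine once the slice is in hand.
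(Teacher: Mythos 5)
The paper itself gives no proof of Theorem~\ref{prop:local}; it is quoted as well known from Miyanishi's Tata notes, so your argument must stand on its own, and it has two genuine gaps. The first is in Step 1: for an \emph{arbitrary} $r$ with $E(r)\ne r$, the polynomial $E(r)-r$ is in general neither additive nor a polynomial with coefficients in $B^E$, because $E$ need not restrict to $B^E[r]$, so the argument of Lemma~\ref{lem:R[x]} does not apply. Only the top coefficient of $E(r)$ is forced into $B^E$ by (A2). For instance, with $p=0$, $B=k[x_1,x_2]$ and $E=(x_1+T,\ x_2+x_1T+\tfrac12 T^2)$, the choice $r=x_2$ gives $E(r)-r=x_1T+\tfrac12 T^2$, which is not of the form $sT$ and has a non-invariant coefficient; your assertion that $p=0$ forces $\lambda(r,T)=sT$ is false. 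The missing idea is the \emph{choice} of $r$: take $r\notin B^E$ with $q:=\deg_TE(r)$ minimal. Writing $E(r)=\sum_i r_iT^i$, condition (A2) gives $E(r_i)=\sum_{j\ge i}\binom{j}{i}r_jT^{j-i}$, hence $\deg_TE(r_i)\le q-i<q$ for $i\ge 1$, and minimality forces $r_i\in B^E$ for all $i\ge 1$. Only after this selection does $E$ restrict to $B^E[r]$, so that $\lambda(T):=E(r)-r$ is an additive polynomial over $B^E$ with leading coefficient $s:=r_q\in B^E\sm\zs$.

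The second gap is fatal to Steps 2 and 3 when $p>0$: a slice need not exist, even after localizing at elements of $B^E$. Take $B=k[x]$ with $E(x)=x+T^p$; then $B^E=k$, and $E(f(x))-f(x)=f(x+T^p)-f(x)$ involves only powers of $T^p$ for every $f$, so no $r'$ in $B$ or in any localization satisfies $E(r')=r'+cT$ with $c\ne 0$. Your Newton correction cannot lower the top degree of an additive polynomial: subtracting an invariant multiple of $r^p$ produces $\lambda(T)^p$, whose degree is $p$ times larger, and there is no way to extract $p$-th roots to go downward. Consequently the retraction $\pi=E_{-r'}$ of Step 3 is unavailable, and the decomposition $B_S=B_S^E[r]$ must be proved with $\lambda$ a general additive polynomial. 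The standard route is induction on $m:=\deg_TE(b)$: writing $E(b)=\sum_ib_iT^i$, one has $b_m\in B_S^E$; one shows $q\mid m$ for every $b\notin B_S^E$ (if $m=aq+t$ with $0<t<q=p^e$, then by Lucas $\binom{m}{aq}\equiv 1 \pmod p$, so $\deg_TE(b_{aq})=t<q$, contradicting the minimality of $q$); and then $b-b_ms^{-m/q}r^{m/q}$ has strictly smaller $\deg_TE$, since $E(r^{m/q})=(r+\lambda(T))^{m/q}$ has leading term $s^{m/q}T^m$. Your transcendence argument (apply $E$ to a minimal algebraic relation and compare leading $T$-coefficients) is correct and carries over verbatim to this setting.
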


\begin{definition}\label{defn:fcac}\rm 
Let $A$ be a subring of $B$. 
We say that $A$ is 

\nd 
(f) {\it factorially closed} in $B$ 
if $a,b\in B$ and $ab\in A\sm \zs $ imply $a,b\in A$; 

\nd 
(a) {\it algebraically closed} in $B$ 
if $a\in B$, $f(x)\in A[x]\sm \zs $ and $f(a)=0$ imply $a\in A$. 

It is easy to see that (f) implies (a) 
and $B^*\subset A$. 
\end{definition}

\begin{rem}\label{rem:local}\rm 
Let $E$, $r$ and $S$ be as in Theorem~\ref{prop:local}.

\nd (i) 
$B_S^E$ is factorially closed in $B_S=B_S^E[r]=(B_S^E)^{[1]}$. 
Hence, $B^E=B_S^E\cap B$ is factorially closed in $B$.

\nd (ii) 
$B^*$ is contained in $B^E$ by (i). 
Hence, 
for any field $k\subset B$, 
we have $k\subset B^E$, 
i.e., 
$E$ is a $\Ga $-action over $k$.

\nd (iii) 
$B^E$ is algebraically closed in $B$ by (i). 
Hence, 
$B^E[t]=(B^E)^{[1]}$ holds for any $t\in B\sm B^E$.

\nd (iv) 
Since $E$ extends to a $\Ga $-action on $B_S=B_S^E[r]$ over $B_S^E$, 
we know by Lemma~\ref{lem:R[x]} that 
$\lambda (T):=E(r)-r$ belongs to $B_S^E[T]$, 
and is additive. 
Moreover, 
since $r$ is in $B$, 
we have $\lambda (T)\in B[T]\cap B_S^E[T]=B^E[T]$.

\nd (v) 
Pick any $\alpha \in B^E$. 
Then, 
we have $E_{\alpha }(r)-r=\lambda (\alpha )\in B^E$ by (iv). 
Since $E_{\alpha }$ extends to an element of $\Aut _{B_S^E}B_S^E[r]$, 
it follows that 
$E_{\alpha }$ is a quasi-translation of $B$ over $B^E$. 
Clearly, 
$\lambda (\alpha )\ne 0$ if and only if $E_{\alpha }\ne \id $.

\nd (vi) In (v), 
assume that $E_{\alpha }\ne \id $. 
If $p=0$, then 
$B_S^E[r]^{E_{\alpha }}=B_S^E$ holds 
by Example~\ref{ex:Ga-action on R[x]}~(iii). 
Hence, we have 
$B^{E_{\alpha }}=B_S^E[r]^{E_{\alpha }}\cap B=B_S^E\cap B=B^E$. 
If $p>0$, 
then $B^{E_{\alpha }}$ is transcendental over $B^E$, 
since so is $r^p-\lambda (\alpha )^{p-1}r\in B^{E_{\alpha }} \sm B_S^E$. 
\end{rem}

Let us mention some properties of quasi-translations. 

\begin{lem}\label{lem:quasi-translation}\rm 
We define $\tau :=E_1\in \Aut _RR[x]$ 
for the $\Ga $-action $E$ in Example~\ref{ex:Ga-action on R[x]}. 
Then, 
the following {\rm (i)} and {\rm (ii)} 
hold for any subring $B$ of $R[x]$.

\nd{\rm (i)} 
$B^*$ is contained in $R[x]^\tau \cap B$. 

\nd{\rm (ii)} 
If $p=0$ and $\tau (B)\subset B$, 
then $\tau $ restricts to an element $\sigma $ of $\Aut B$. 
Moreover, $\sigma $ is exponential. 
\end{lem}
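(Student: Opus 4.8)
\textbf{Proof proposal for Lemma~\ref{lem:quasi-translation}.}

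The plan is to handle the two statements in turn, both resting on the concrete description of the $\Ga$-action $E$ from Example~\ref{ex:Ga-action on R[x]}, namely $E(x)=x+aT$ with $a\in R\sm\zs$, so that $\tau(x)=x+a$. First I would prove (i). By Lemma~\ref{lem:invariant ring} we have $R[x]^\tau=R[x^p-a^{p-1}x]$ (when $p>0$; when $p=0$ one has $R[x]^\tau=R$ by Example~\ref{ex:Ga-action on R[x]}~(iii)), but in fact the cleaner route is to invoke Remark~\ref{rem:local}: since $\tau=E_1$ for a nontrivial $\Ga$-action on $R[x]$, part (i) of that remark gives that $R[x]^\tau$ is factorially closed in $R[x]$, hence contains $R[x]^*$. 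Now let $B$ be any subring of $R[x]$ and take $u\in B^*$. Then $u$ is a unit of $B$, so $u$ is a unit of $R[x]$ as well (its inverse lies in $B\subset R[x]$), i.e. $u\in R[x]^*\subset R[x]^\tau$. Combined with $u\in B$ this gives $u\in R[x]^\tau\cap B$, which is (i).

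For (ii), assume $p=0$ and $\tau(B)\subset B$. Since $\tau$ has... well, $\tau$ need not have finite order here, so I cannot immediately cite the finite-order criterion in \S\ref{subsect:extension}(1)(i); instead I would argue directly that $\tau^{-1}(B)\subset B$. Observe $\tau^{-1}=E_{-1}$ is given by $x\mapsto x-a$. The key point is that $\tau$ restricts to an \emph{injective} endomorphism of $B$, and I claim it is surjective: given $f\in B$, I want $g\in B$ with $\tau(g)=f$. Here is where $p=0$ is used. Consider the $R$-subalgebra $B'\subset B$ generated by the (finite) $\tau$-orbit... no — rather, note that for any $f\in B\subset R[x]$, since $\tau$ is a bijection of $R[x]$ there is a unique $g=\tau^{-1}(f)\in R[x]$, and I must show $g\in B$. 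Write $n=\deg f$. The element $h:=f+\tau(f)+\cdots+\tau^{n}(f)$ lies in $B$ because $\tau(B)\subset B$, and a degree count (using that each $\tau^i(f)$ has the same leading term as $f$ since $p=0$, so no cancellation of top degree occurs in partial differences) shows that $g=\tau^{-1}(f)$ can be expressed as an $R$-polynomial in $f,\tau(f),\ldots,\tau^{n}(f)$; hence $g\in B$. I expect this to be the main obstacle: making precise the finite linear-algebra argument that recovers $\tau^{-1}(f)$ from finitely many forward iterates, which works in characteristic zero because the operator $\tau-\id$ strictly drops degree and is therefore locally nilpotent on each finite-dimensional $R$-module of bounded-degree polynomials. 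Once $\tau^{-1}(B)\subset B$ is established, $\tau$ restricts to $\sigma\in\Aut B$.

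Finally, to see $\sigma$ is exponential: by Remark~\ref{rem:local}(iv), $\lambda(T):=E(x)-x=aT$ is additive and lies in $R[x]^E[T]$, and more to the point $E$ itself restricts to $B$. Indeed $E(x)=x+aT$, and for a general $b\in B$, writing $b$ as a polynomial in $x$ of degree $n$, one has $E(b)=b+(\tau(b)-b)T+\tfrac12(\cdots)T^2+\cdots$ — more carefully, since $p=0$ the additive one-parameter family $\{E_t\}$ satisfies $E(b)=\sum_{i\ge 0}\tfrac{1}{i!}D^i(b)\,(aT)^i$ for the locally nilpotent derivation $D=d/dx$, and each $D^i(b)/i!\cdot a^i$ is obtainable from the finitely many $\tau^j(b)\in B$ by the same Vandermonde-type inversion as above; hence $E(B)\subset B[T]$. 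Thus $E$ restricts to a $\Ga$-action $\bar E$ on $B$ by \S\ref{subsect:extension}(1)(ii), and $\bar E_1=\sigma$, so $\sigma$ is exponential by Definition~\ref{def:exp}. This completes the proof. $\square$
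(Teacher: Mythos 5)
Your proposal is correct and follows essentially the same route as the paper: part (i) is the observation that $B^*\subset R[x]^*\subset R[x]^\tau $ together with $B^*\subset B$, and part (ii) rests on recovering the coefficients $f_i(x)a^i$ of $E(b)=f(x+aT)=\sum _{i=0}^df_i(x)a^iT^i$ from the forward iterates $\tau ^0(b),\ldots ,\tau ^d(b)\in B$ by inverting a Vandermonde matrix over $\Q $, which is exactly the paper's argument. The only genuine difference is that your separate (and somewhat sketchy) verification of $\tau ^{-1}(B)\subset B$ is redundant: once $E$ is shown to restrict to a $\Ga $-action on $B$, the restriction of $\tau =E_1$ is automatically an element of $\Aut B$ with inverse $E_{-1}$, which is why the paper reduces everything to the single claim that $E(B)\subset B[T]$.
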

\begin{proof}
(i) We have 
$B^*\subset R[x]^*=R^*\subset R\subset R[x]^\tau $ and $B^*\subset B$.

(ii) 
It suffices to show that $E$ restricts to $B$. 
Pick any $b=f(x)\in B$. 
Write $f(x+y)=\sum _{i=0}^df_i(x)y^i$, 
where $d\ge 0$ and $f_i(x)\in R[x]$. 
Then, 
we have 
$\tau ^l(b)=f(x+la)=\sum _{i=0}^df_i(x)a^il^i$ 
for each $l\in \Z $. 
Since $p=0$, 
we can write 
$(\tau ^l(b))_{l=0}^d=(f_i(x)a^i)_{i=0}^dP$, 
where 
$P\in \GL _{d+1}(\Q )$ is a Vandermonde matrix. 
Moreover, 
$(\tau ^l(b))_{l=0}^d$ belongs to $B^{d+1}$, 
since $\tau (B)\subset B$ by assumption. 
Thus, 
$(f_i(x)a^i)_{i=0}^d$ belongs to $B^{d+1}$. 
Therefore, 
$E(b)=f(x+aT)=\sum _{i=0}^df_i(x)a^iT^i$ belongs to $B[T]$. 
\end{proof}

Lemma~\ref{lem:quasi-translation} implies 
(i) of the following remark, 
since a quasi-translation is a restriction of a translation.

\begin{rem}\label{rem:quasi-translation}\rm 
(i) 
Let $\sigma $ be a quasi-translation of $B$. 
Then, 
we have $B^*\subset B^{\sigma }$. 
If $p=0$, 
then $\sigma $ is exponential.

\nd (ii) 
If $\sigma $ is a quasi-translation of $\Rxx $, 
then 
$x_1,\ldots ,x_n$ are in $\Rxx ^*\subset \Rxx ^{\sigma }$ 
by (i). 
Hence, 
any $\sigma \in \Aut _R\Rxx $ with $\langle \sigma \rangle \simeq \Z /p\Z $ 
is an example for which the converse of (C) in (\ref{eq:ABC}) does not hold. 
\end{rem}

In the rest of this section, 
let $k$ be a field. 
For a $k$-subalgebra $A$ of $\kx $, we define 
$$
\gamma (A):=\max \{ 0\le N\le n\mid 
\exists \phi \in \Aut _k\kx \text{ s.t.\ }
\phi (k[x_1,\ldots ,x_N])\subset A\} . 
$$
The {\it rank} of a $\Ga $-action $E$ on $\kx $ 
is defined to be $n-\gamma (\kx ^E)$.

\begin{rem}\label{rem:rank 1}\rm 
If $A$ is algebraically closed in $\kx $ 
and $\phi (k[x_1,\ldots ,x_{n-1}])\subsetneq A$ 
for some $\phi \in \Aut _k\kx $, 
then we have $A=\kx $. 
Hence, $E$ has rank one if and only if 
$\kx ^E=\phi (k[x_1,\ldots ,x_{n-1}])$ 
for some $\phi \in \Aut _k\kx $ 
because of Remark~\ref{rem:local} (iii). 
\end{rem}

\begin{rem}\label{rem:rank ineq}\rm 
For $f\in \kx $, 
let $f^{\lin }\in \sum _{i=1}^nkx_i$ 
denote the linear part of $f$. 

\nd (i) 
If $(f_1,\ldots ,f_n)$ is an element of $\Aut _k\kx $, 
then $f_1^{\lin },\ldots ,f_n^{\lin }$ 
are linearly independent over $k$, 
since the Jacobian $\det (\partial f_i/\partial x_j)_{i,j}$ belongs to $k^*$. 

\nd (ii) 
For a $k$-subalgebra $A$ of $\kx $, 
we define $A^{\lin }$ to be the $k$-vector subspace of 
$\sum _{i=1}^nkx_i$ generated by $\{ f^{\lin }\mid f\in A\} $. 
Then, we have $\gamma (A)\le \dim _kA^{\lin }$ by (i). 
\end{rem}

The following result is due to Rentschler~\cite{Rentschler} when $p=0$ 
and Miyanishi~\cite{MiyanishiNagoya} when $p>0$ 
(see \cite{ML,EH,Kojima,Nihonkai} for alternative proofs).

\begin{thm}[Rentschler~\cite{Rentschler}, 
Miyanishi~\cite{MiyanishiNagoya}]\label{thm:RM}
Every nontrivial $\Ga $-action 
on $k[x_1,x_2]$ has rank one. 
\end{thm}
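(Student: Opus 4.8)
The plan is to prove Theorem~\ref{thm:RM} by exploiting the rank inequality $\gamma(A)\le\dim_k A^{\lin}$ from Remark~\ref{rem:rank ineq} together with the structure theorem for nontrivial $\Ga$-actions. Let $E$ be a nontrivial $\Ga$-action on $k[x_1,x_2]$. By Theorem~\ref{prop:local} and Remark~\ref{rem:local}, the invariant ring $A:=k[x_1,x_2]^E$ is factorially closed and algebraically closed in $k[x_1,x_2]$, and is nontrivial (i.e.\ $A\neq k[x_1,x_2]$) since $E$ is nontrivial. The rank of $E$ is $2-\gamma(A)$, so I must show $\gamma(A)\ge 1$, i.e.\ that there is $\phi\in\Aut_k k[x_1,x_2]$ with $\phi(k[x_1])\subset A$; equivalently, by Remark~\ref{rem:rank 1}, that $A=\phi(k[x_1])$ for some such $\phi$ (the case $\gamma(A)=2$ being excluded because $A\neq k[x_1,x_2]$ while $k[x_1,x_2]$ is algebraically closed in itself — actually $\gamma(A)=2$ would force $A=k[x_1,x_2]$).

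The key step is to understand the curve defined by $A$ inside $\A^2_k$. First I would invoke the structure theorem: there is $r\in B:=k[x_1,x_2]$ and $s\in A\setminus\zs$ with $B_S=A_S[r]=(A_S)^{[1]}$ where $S=\{s^i\}$. Thus $Q(A)$ has transcendence degree one over $k$, and $A$ is an affine $k$-domain of dimension one, integrally (even factorially) closed in $B$. The plan is to show $A$ is in fact a polynomial ring $k[f]$ for some $f\in B$, and moreover that $f$ is a coordinate, i.e.\ $k[x_1,x_2]=k[f,g]$ for some $g$. For the first part, one shows $\Spec A$ is a smooth affine curve with trivial units ($A^*=k^*$ since $A$ is factorially closed in $B$ and $B^*=k^*$) and trivial divisor class group (again from factorial closedness, $A$ is a UFD because $B$ is and $A$ is factorially closed); a one-dimensional affine UFD over $k$ that is smooth with $A^*=k^*$ and rational — here rationality comes from $B_S$ being a localized polynomial ring over $A_S$, forcing $Q(A)$ to be a subfield of $k(x_1,x_2)$ of transcendence degree one, hence $Q(A)=k(f)$ by Lüroth — is a polynomial ring $k[f]$ after absorbing the curve's structure. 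Then the harder part is Abhyankar--Moh / Suzuki type embedding: the inclusion $k[f]=A\hookrightarrow k[x_1,x_2]$ with $k[x_1,x_2]$ a free $A$-module after localization, together with the fibration structure $B_S=A_S^{[1]}$, forces $f$ to be a variable of $k[x_1,x_2]$, so $A=\phi(k[x_1])$ and $\gamma(A)\ge 1$, whence $E$ has rank one.

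The main obstacle, and the place where the two references split, is showing that the generic fiber being $\A^1$ (from $B_S=A_S^{[1]}$) propagates to \emph{every} fiber being $\A^1$ and that $f$ is a coordinate. In characteristic zero this is classical (Rentschler's original argument uses that $\ker(\partial)$ for a locally nilpotent derivation $\partial$ on $k[x_1,x_2]$ has a slice after localization and then a degree argument on $\partial$ directly; alternatively Abhyankar--Moh--Suzuki gives that a coordinate-line embedding is rectifiable). In characteristic $p>0$ the delicate point is that $E$ need not come from a locally nilpotent derivation, and $A^{E_\alpha}$ can be strictly larger than $A$ for the induced order-$p$ automorphism, so the argument must work at the level of the $\Ga$-action itself; Miyanishi's proof handles this via the theory of $\A^1$-fibrations on surfaces. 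Since the excerpt permits citing these results, I would present the proof as: reduce to showing $\gamma(\kx^E)\ge 1$, establish that $\kx^E=k[f]$ with $f$ a coordinate using the structure theorem plus the quoted Rentschler--Miyanishi machinery (or Abhyankar--Moh--Suzuki for the rectifiability), and conclude. If a self-contained argument is wanted instead, the route is through a careful degree/Newton-polygon analysis of $\lambda(T)=E(r)-r\in A^{[1]}[T]$ combined with minimal-degree choice of $r$ — but that is precisely the calculation I would not grind through here, and it is the genuinely hard kernel of the theorem.
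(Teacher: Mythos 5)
The paper does not prove Theorem~\ref{thm:RM} at all: it is quoted as a classical result of Rentschler~\cite{Rentschler} ($p=0$) and Miyanishi~\cite{MiyanishiNagoya} ($p>0$), with a pointer to \cite{Nihonkai} for a short derivation from the Jung--van der Kulk theorem (Theorem~\ref{thm:JvdK}). So there is no internal proof to compare yours against, and your submission has to stand as a free-standing argument.

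As such, it is an outline rather than a proof. The reductions you make are correct and match the standard skeleton: by Theorem~\ref{prop:local} and Remark~\ref{rem:local} the invariant ring $A$ is a factorially closed (hence algebraically closed, unit-trivial, UFD) $k$-subalgebra of transcendence degree one with $B_S=A_S^{[1]}$, and by Remark~\ref{rem:rank 1} it suffices to produce $\phi \in \Aut _kk[x_1,x_2]$ with $A=\phi (k[x_1])$. But the two steps that actually constitute the theorem --- that $A=k[f]$ for a single $f$, and that $f$ is a coordinate --- are exactly the ones you defer to ``the quoted Rentschler--Miyanishi machinery,'' so the hard kernel is cited rather than proved. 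Moreover, the mechanism you name for the second step, an Abhyankar--Moh--Suzuki rectifiability statement, is unavailable in characteristic $p>0$: there exist closed embeddings of $\A ^1$ in $\A ^2$ that are not rectifiable, so the mere fact that $\Spec A$ is a line does not make $f$ a variable. What rescues the argument in Miyanishi's proof is the $\A ^1$-fibration structure coming from the $\Ga $-action itself, and your sketch gestures at this without supplying it; since $p>0$ is precisely the case this paper cares about, that is the one place where a concrete argument is indispensable. If you want a proof at a level of detail consistent with the rest of the sketch, the cleanest route is the one the paper itself flags: the short derivation of Theorem~\ref{thm:RM} from Theorem~\ref{thm:JvdK} in \cite{Nihonkai}, which sidesteps fibration theory entirely.
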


We note that 
\cite{Nihonkai} derived Theorem~\ref{thm:RM} from 
the following well-known theorem by simple arguments.

\begin{thm}[Jung~\cite{Jung}, van der Kulk~\cite{Kulk}]\label{thm:JvdK}
$\Aut _kk[x_1,x_2]=\T _2(k)$. 
\end{thm}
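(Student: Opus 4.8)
The plan is to prove the Jung--van der Kulk theorem, $\Aut _kk[x_1,x_2]=\T _2(k)$, via the standard degree-reduction argument. Given an arbitrary $\sigma =(f_1,f_2)\in \Aut _kk[x_1,x_2]$, I want to show that by composing with elements of $\Aff _2(k)$ and elementary automorphisms, I can strictly decrease a complexity measure until I reach an affine automorphism. The natural measure is $\deg \sigma :=\deg f_1+\deg f_2$ (total degree). The base case is clear: if $\deg \sigma =2$, then both $f_1,f_2$ have degree one, so $\sigma \in \Aff _2(k)\subset \T _2(k)$.

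The heart of the argument is the following dichotomy on the leading (top-degree) forms. Write $d_i:=\deg f_i$ and let $\bar f_i$ denote the homogeneous top-degree part of $f_i$. First I would record that since $\det (\partial f_i/\partial x_j)\in k^*$ (as $\sigma $ is an automorphism), the Jacobian is a nonzero constant; this is the key structural constraint. The claim I must establish is that if $\sigma $ is not affine, then (after possibly swapping $x_1,x_2$) one of $d_1,d_2$ divides the other --- say $d_2\mid d_1$ --- and moreover the top form $\bar f_1$ is a scalar multiple of a power of $\bar f_2$, i.e.\ $\bar f_1=c\,\bar f_2^{\,m}$ for some $c\in k^*$ and $m=d_1/d_2\ge 1$. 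Granting this, I can compose with the elementary automorphism $\tau =(x_1-c\,x_2^m,x_2)$: then $\tau \circ \sigma $ sends $x_1\mapsto f_1-c f_2^m$, whose top-degree part $\bar f_1-c\bar f_2^{\,m}$ cancels, so $\deg (f_1-cf_2^m)<d_1$. Since $\tau \circ \sigma $ is again an automorphism of strictly smaller total degree, induction finishes the proof.

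The real obstacle is proving the proportionality of the leading forms $\bar f_1=c\,\bar f_2^{\,m}$. This is where the two-variable hypothesis is essential. I would argue from the Jacobian condition: because $k[x_1,x_2]$ has transcendence degree $2$ and $f_1,f_2$ generate it, they are algebraically independent, yet the top forms $\bar f_1,\bar f_2$ are two homogeneous polynomials in two variables, hence (working in the UFD $k[x_1,x_2]$, or after passing to the one-variable ratio) they \emph{must} be algebraically dependent --- any two elements of a polynomial ring in two variables that are both homogeneous satisfy a relation coming from the fact that $k(x_1,x_2)$ has transcendence degree $2$ and any homogeneous element is determined up to scalar by its degree and its factorization into linear forms over $\bar k$. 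Concretely, I would factor $\bar f_1$ and $\bar f_2$ into linear forms over the algebraic closure $\bar k$ and show that a vanishing Jacobian-type condition on the leading parts forces them to share the same set of linear factors up to multiplicity, which yields $\bar f_1^{\,d_2}=c'\bar f_2^{\,d_1}$ and hence, by unique factorization, $\bar f_1=c\,\bar f_2^{\,d_1/d_2}$ after checking $d_2\mid d_1$. The delicate point is ruling out the ``generic'' case where the Jacobian of the top forms is itself nonzero of positive degree; one shows that if $\bar f_1,\bar f_2$ were \emph{not} proportional powers, the Jacobian $\det (\partial f_i/\partial x_j)$ would have a nonzero top-degree part of positive degree, contradicting that it lies in $k^*$. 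This degree bookkeeping on the Jacobian, carried out carefully in characteristic $p$ (where one must take care that the relevant partial derivatives do not all vanish spuriously), is the main technical hurdle.

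Once the elementary reduction step is in place, the induction is routine: each step either lands in $\Aff _2(k)$ or strictly decreases $\deg \sigma $, and since $\deg \sigma \ge 2$ is a positive integer the process terminates. Reversing the chain of compositions expresses $\sigma $ as a product of affine and elementary automorphisms, all of which lie in $\T _2(k)$ by definition, giving $\sigma \in \T _2(k)$ and hence $\Aut _kk[x_1,x_2]\subset \T _2(k)$; the reverse inclusion is immediate from $\T _2(k)\subset \Aut _k\kx $ in \eqref{eq:TGPEGC}.
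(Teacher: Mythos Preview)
The paper does not supply its own proof of Theorem~\ref{thm:JvdK}; it is quoted as a classical result with references to Jung and van der Kulk, so there is nothing in the paper to compare your argument against directly.

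That said, your sketch follows the standard characteristic-zero route (essentially Jung's argument), and it has a genuine gap when $p>0$, which is precisely the setting of this paper. The crux of your reduction is the implication
\[
J(\bar f_1,\bar f_2):=\det\!\begin{pmatrix}\partial \bar f_1/\partial x_1&\partial \bar f_1/\partial x_2\\[1mm] \partial \bar f_2/\partial x_1&\partial \bar f_2/\partial x_2\end{pmatrix}=0\ \Longrightarrow\ \bar f_1=c\,\bar f_2^{\,m}.
\]
In characteristic $p$ this fails already for homogeneous forms in two variables: take $\bar f_1=x_1^p$ and $\bar f_2=x_2$; then every partial of $\bar f_1$ vanishes, so $J(\bar f_1,\bar f_2)=0$, yet $\bar f_1$ is certainly not a scalar power of $\bar f_2$. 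Your parenthetical ``carried out carefully in characteristic $p$'' does not indicate how to get around this, and in fact no amount of care with the Jacobian alone will: the vanishing of $J(\bar f_1,\bar f_2)$ is simply too weak a hypothesis in positive characteristic to force the leading-form relation you need.

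The characteristic-free proofs (van der Kulk's, and later expositions by Nagata, Cohn, Makar--Limanov and others) bypass the Jacobian entirely and exploit directly that $k[x_1,x_2]=k[f_1,f_2]$: writing $x_1$ and $x_2$ as polynomials in $f_1,f_2$ and analysing which monomials $f_1^if_2^j$ can share the same top degree forces $d_2\mid d_1$ (after ordering) and $\bar f_1\in k\cdot\bar f_2^{\,d_1/d_2}$ without differentiating anything. If you want to repair your argument for arbitrary $k$, replace the Jacobian step by this degree-semigroup reasoning; the induction you set up then goes through unchanged.
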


It is also known that $\Aut _kk[x_1,x_2]$ is the 
amalgamated free product of 
$G:=\Aff _2(k)$ and $J:=\J _2(k)$ 
over $G\cap J$ (cf.\ \cite[Theorem 3.3]{Nagata}). 
This implies that, 
if $\sigma \in \Aut _kk[x_1,x_2]$ has finite order, 
then there exists $\phi \in \Aut _kk[x_1,x_2]$ such that 
$\sigma ^\phi \in G\cup J$ 
(cf.~Serre~\cite{tree}; 
see also \cite{Ig} and \cite[Proposition 1.11]{Wright}). 
Using this fact,  
several researchers recently proved the following theorem\footnote{
The author announced this theorem 
on the occasion of the 13th meeting 
of Affine Algebraic Geometry at Osaka 
on March 5, 2015 (see \cite{Miyanishi3}).} 
(cf.~\cite{Maubach} and \cite{Miyanishi3}).

\begin{thm}\label{thm:Osaka}
Assume that $p>0$ and 
$\sigma \in \Aut _kk[x_1,x_2]$ has order $p$. 
Then, 
there exists $(y_1,y_2)\in \Aut _kk[x_1,x_2]$ 
such that $\sigma (y_1)\in y_1+k[y_2]$ and $\sigma (y_2)=y_2$. 
\end{thm}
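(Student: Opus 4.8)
The plan is to reduce Theorem~\ref{thm:Osaka} to the two cases allowed by the amalgamated free product structure of $\Aut _kk[x_1,x_2]$. Since $\sigma $ has finite order $p$, the fact recalled just above (from Serre~\cite{tree}, applied to the amalgam $\Aut _kk[x_1,x_2]=G*_{G\cap J}J$ with $G=\Aff _2(k)$ and $J=\J _2(k)$) gives $\phi \in \Aut _kk[x_1,x_2]$ with $\sigma ^{\phi }\in G\cup J$. Replacing $\sigma $ by $\sigma ^{\phi }$ and the coordinate system $(x_1,x_2)$ accordingly, I may assume $\sigma $ is either affine or triangular; the desired conclusion is invariant under such a coordinate change, so it suffices to treat these two cases.

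First I would handle the triangular case. If $\sigma \in \J _2(k)$ has order $p$, then by Remark~\ref{rem:strict triangular}~(i) we have $\sigma \in \J _2^{\circ }(k)$, so $\sigma (x_1)=x_1+c$ for some $c\in k$ and $\sigma (x_2)\in x_2+k[x_1]$. If $c=0$, then $(y_1,y_2):=(x_2,x_1)$ already works since $\sigma (x_1)=x_1$ and $\sigma (x_2)\in x_2+k[x_1]$. If $c\ne 0$, then $c\in k^*$, so by Maubach's Lemma~\ref{lem:order p triangular} (or directly Theorem~\ref{thm:main}~(i)) $\sigma $ is conjugate to $(x_1+c,x_2)$ by some $\psi =(x_1,f_2)\in \J _2^{\circ }(k)$; taking $(y_1,y_2):=(\psi (x_1),\psi (x_2))=(x_1,x_2+f_2)$ gives $\sigma (y_1)=y_1+c\in y_1+k[y_2]$ and $\sigma (y_2)=y_2$, as wanted.

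The affine case is where the real work lies, and I expect it to be the main obstacle. Here $\sigma \in \Aff _2(k)$ has order $p$, so after translating we may assume $\sigma $ fixes the origin and is given by a matrix $M\in \GL _2(k)$ with $M^p=I$ (and $M\ne I$). Over a field of characteristic $p$, such a matrix is unipotent: its eigenvalues are $p$-th roots of $1$, hence all equal to $1$, so $M$ is conjugate in $\GL _2(k)$ — possibly after a field extension, but one checks the rational canonical form suffices over $k$ itself since the characteristic polynomial is $(t-1)^2$ — to $\left(\begin{smallmatrix}1&1\\0&1\end{smallmatrix}\right)$. Translating this conjugation back to a linear change of coordinates $(y_1,y_2)$, we get $\sigma (y_1)=y_1+y_2$ (or a scalar multiple of $y_2$, absorbable into $y_2$) and $\sigma (y_2)=y_2$, which is of the required form with $k[y_2]\ni y_2$. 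The one subtlety to verify carefully is that the conjugating matrix can be chosen with entries in $k$ and not merely in an extension; this follows because a unipotent $2\times 2$ matrix with a single Jordan block has its canonical form defined over the ground field (pick any vector $v$ not fixed by $M$, then $\{(M-I)v,v\}$ is a $k$-basis putting $M$ in the claimed form). Finally I would note that in both cases the new coordinates $(y_1,y_2)$ are obtained from the original ones by composing with $\phi ^{-1}$, so they indeed form an element of $\Aut _kk[x_1,x_2]$, completing the proof.
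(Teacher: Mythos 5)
Your overall strategy --- reduce via the amalgamated free product structure to $\sigma \in \Aff _2(k)\cup \J _2(k)$ and then treat the affine and triangular cases separately --- is exactly the route the paper indicates (the paper itself only sketches this reduction and defers the proof to \cite{Maubach} and \cite{Miyanishi3}). Your triangular case is correct, up to a notational slip: in the paper's two-line notation $\psi =(x_1,f_2)$ means $\psi (x_2)=f_2$ with $f_2\in x_2+k[x_1]$, so the coordinates should be $(y_1,y_2)=(x_1,f_2)$, not $(x_1,x_2+f_2)$; the conclusion is unaffected.

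The affine case, however, contains a genuine gap: the reduction ``after translating we may assume $\sigma $ fixes the origin'' fails in characteristic $p$. Writing $\sigma $ as $x\mapsto Mx+v$, conjugating by a translation $x\mapsto x+w$ replaces $v$ by $v-(M-I)w$, so the constant part can be removed only when $v$ lies in $\mathrm{im}(M-I)$. But $M^p=I$ forces $(M-I)^p=0$, so $M$ is unipotent and $M-I$ is never invertible; moreover the condition $\sigma ^p=\id $ amounts to $M^p=I$ together with $(M-I)^{p-1}v=0$, which for $p\ge 3$ (where already $(M-I)^2=0$ in the $2\times 2$ case) imposes no restriction on $v$ whatsoever. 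Concretely, $\sigma =(x_1+x_2,\,x_2+1)$ has order $p$ for every $p\ge 3$ and fixes no point, and a nontrivial pure translation ($M=I$, $v\ne 0$) is another order-$p$ affine automorphism that your reduction cannot reach. The repair is to use your (correct) observation that the unipotent linear part is triangularizable over $k$ only to conclude that, after a linear change of coordinates and a reordering of the variables, $\sigma $ lies in $\J _2^{\circ }(k)$ with constants possibly surviving in both coordinates, and then to feed this into your triangular case (invoking Maubach's lemma when the $\sigma $-semi-invariant linear form is moved by a nonzero constant). In other words, the affine case should be reduced to the triangular case, not to the Jordan form of $M$ alone.
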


Since a conjugate of an 
elementary automorphism is exponential, 
Theorem~\ref{thm:Osaka} implies that 
Problem~\ref{prob:main} has an affirmative answer 
if $n=2$ and $R$ is a field. 
Theorem~\ref{thm:Osaka} also implies that, 
for any ring $R$ with $p>0$, 
all elements of $\Aut _RR[x_1,x_2]$ of order $p$ 
are generic elementary automorphisms.

\section{Non-exponential automorphisms of order $p$}\label{sect:non-exp}
\setcounter{equation}{0}

The goal of this section is to 
construct non-exponential 
generic elementary automorphisms of $\Rx $. 
For this purpose, 
we first give a criterion for non-exponentiality.

\subsection{Modification Lemma}\label{sect:modification}

In this subsection, 
we present a technique for modifying a given $\Ga $-action 
to a simpler one. 
For this purpose, 
we need the following lemma, 
which is a variant of Gauss's lemma.

\begin{lem}\label{lem:Gauss}
Let $R$ be a UFD, 
and let $f(T),g(T)\in R[T]\sm \zs $ with $g(0)=0$. 
If $f(T)$ and $g(T)$ are primitive, 
then $f(g(T))$ is also primitive. 
\end{lem}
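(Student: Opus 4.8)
The plan is to reduce to the classical Gauss lemma about products of primitive polynomials. Recall that a nonzero polynomial in $R[T]$ is primitive if the gcd of its coefficients is a unit, and that over a UFD the product of primitive polynomials is primitive; equivalently, the content function $c(\cdot)$ is multiplicative up to units. The key observation is that composition with $g$ can be traded for multiplication once we work one prime at a time.

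First I would fix an arbitrary prime element $\pi $ of $R$ and show that $\pi $ does not divide $c(f(g(T)))$. Reducing modulo $\pi $, write $\bar f(T), \bar g(T)\in (R/\pi R)[T]$ for the images; since $R/\pi R$ is a domain, so is its polynomial ring. Because $f$ is primitive, $\bar f\ne 0$; because $g$ is primitive and $g(0)=0$, the polynomial $\bar g$ is nonzero and still has $\bar g(0)=0$, so $\bar g$ is nonconstant (its nonzero coefficients sit in positive degree). The claim then becomes: the composite $\bar f(\bar g(T))$ is nonzero in $(R/\pi R)[T]$. This is immediate from the fact that $(R/\pi R)[T]$ is an integral domain together with a leading-term computation: if $\bar f$ has degree $d\ge 0$ with leading coefficient $u\ne 0$ and $\bar g$ has degree $e\ge 1$ with leading coefficient $v\ne 0$, then $\bar f(\bar g(T))$ has leading term $u v^{d} T^{de}\ne 0$. (When $d=0$, $\bar f$ is a nonzero constant and the conclusion is trivial.) Hence $\pi \nmid c(f(g(T)))$.

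Since $\pi $ was an arbitrary prime of $R$ and $R$ is a UFD, it follows that $c(f(g(T)))$ is a unit, i.e., $f(g(T))$ is primitive. The only mild subtlety — and the one place to be slightly careful — is the step ``$g$ primitive and $g(0)=0$ imply $\bar g$ nonconstant'': here one uses that $g(0)=0$ kills the constant term, so the surviving coefficients lie in degrees $\ge 1$, and primitivity guarantees at least one of them is a nonunit-escaping-mod-$\pi $, i.e. $\bar g\ne 0$; combined with $\bar g(0)=0$ this forces $\deg \bar g\ge 1$. I do not expect any serious obstacle; the entire argument is the standard ``check content prime by prime, then use that the residue ring is a domain'' template, with composition behaving well on leading terms.
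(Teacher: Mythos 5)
Your proof is correct and follows essentially the same route as the paper: reduce modulo a prime of $R$, note that $\bar f\ne 0$, $\bar g\ne 0$ and $\bar g(0)=0$ force $\bar g$ to be nonconstant, and conclude that $\bar f(\bar g(T))\ne 0$ over the domain $R/\pi R$. The only cosmetic difference is that you justify the last step by a leading-term computation, whereas the paper invokes the algebraic closedness of $R/qR$ in $(R/qR)[T]$; both are the same observation.
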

\begin{proof}
Suppose that $f(g(T))$ is not primitive. 
Then, 
there exists a prime $q\in R$ such that $f(g(T))\in qR[T]$. 
This implies $\ol{f}(\ol{g}(T))=0$, 
where 
$\ol{f}(T)$ and $\ol{g}(T)$ are the images of 
$f(T)$ and $g(T)$ in $(R/qR)[T]$. 
Since $R/qR$ is algebraically closed in $(R/qR)[T]$, 
this implies that $\ol{f}(T)=0$ or $\ol{g}(T)\in R/qR$. 
Since $\ol{g}(0)=0$, 
the latter implies $\ol{g}(T)=0$. 
Thus, $f(T)$ or $g(T)$ belongs to $qR[T]$, 
a contradiction. 
\end{proof}

Let $E$ be a nontrivial $\Ga $-action on $B$. 
We define $\cP _E$ to be the set of pairs $(S,r)$, 
where $S$ is a multiplicative set of $B^E$ and $r\in B_S$, 
such that $B_S=B_S^E[r]$. 
Then, 
$\cP _E$ is nonempty by Theorem~\ref{prop:local}.

Now, 
pick any $(S,r)\in \cP _E$. 
Let $\widetilde{E}$ be the extension of $E$ to $B_S$. 
Then, 
we have $\widetilde{E}(r)\ne r$, 
since $E$ is nontrivial. 
This implies that 
$B_S^E[r]=(B_S^E)^{[1]}$ by Remark~\ref{rem:local} (iii). 
Since $\widetilde{E}$ is a homomorphism of $B_S^E$-algebras, 
we have 
\begin{equation}\label{eq:modification E}
\widetilde{E}:B_S^E[r]\ni h(r)\mapsto h(r+\lambda (T))\in B_S^E[r][T], 
\end{equation}
where $\lambda (T):=\widetilde{E}(r)-r\ne 0$. 
By Lemma~\ref{lem:R[x]}, 
$\lambda (T)$ belongs to $B_S^E[T]$, 
and is additive.

With this notation, the following lemma holds. 

\begin{lem}\label{prop:Gauss}
\nd{\rm (i)} 
For any $\alpha \in B^E$, 
we have 
$\widetilde{E}_\alpha (r)-r=\lambda (\alpha )\in B_S^E$. 
Moreover, 
\begin{equation}\label{eq:E'}
E':B^E_S[r]\ni h(r)\mapsto h(r+\lambda (\alpha )T)\in B^E_S[r][T]
\end{equation}
is a $\Ga $-action on $B_S=B^E_S[r]$ with $E_1'=\widetilde{E}_{\alpha }$.

\nd{\rm (ii)} 
If $B$ is a UFD, then the $\Ga $-action 
$E'$ in $(\ref{eq:E'})$ restricts to $B$. 
\end{lem}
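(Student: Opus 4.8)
The plan is to show that the $\Ga$-action $E'$ of $(\ref{eq:E'})$ sends $B$ into $B[T]$. Since $E'(h(r)) = h(r + \lambda(\alpha)T)$ and $B = B^E[\text{stuff}]$ need not hold on the nose, I would first reduce to analyzing the action on a single element $b \in B$. Write $b = h(r)$ with $h(Y) \in B_S^E[Y]$; since $B_S^E = (B^E)_S$ by \S\ref{subsect:extension}(2)(ii), we may clear denominators and write $h$ with coefficients in $B^E$ after multiplying by a suitable $s \in S \subset B^E$. The difficulty is that $r$ itself lies only in $B_S$, not in $B$, so I cannot directly compare coefficients in $B$; I need a way to transfer a primitivity/divisibility statement about polynomials in $r$ over $B_S^E$ back down to $B$.

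The key observation is that $E_1' = \widetilde{E}_\alpha$ restricts to $B$: indeed $\widetilde{E}_\alpha = E_\alpha$ on $B$ by uniqueness of extension (\S\ref{subsect:extension}(2)(ii)), and $E_\alpha \in \Aut_{B^E} B$ by the discussion following (A2) in \S\ref{sect:intro}. More generally $E_l'$ restricts to $B$ for every $l \in \Z$, since $E_l' = \widetilde{E}_{l\alpha}$ and $l\alpha \in B^E$. So for each $b \in B$ and each $l \in \Z$ we have $E'(b)|_{T = l} = E_{l\alpha}(b) \in B$. Now expand $E'(b) = \sum_{i=0}^d b_i T^i$ with $b_i \in B_S$ (this makes sense because $E'(b) \in B_S[T]$, as $E'$ is a $\Ga$-action on $B_S$); then $(E'(b)|_{T=l})_{l=0}^d = (b_i)_{i=0}^d V$ where $V$ is a Vandermonde matrix over the prime field $\kappa$. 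This is exactly the maneuver in the proof of Lemma~\ref{lem:quasi-translation}(ii) — but that proof needs $V$ invertible over $\kappa$, which fails when $p > 0$. So a direct Vandermonde argument over $\Z$ is not available, and I expect this to be the main obstacle.

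To get around the characteristic-$p$ failure, I would use Lemma~\ref{lem:Gauss} (the Gauss-type lemma) instead, which is clearly why it was proved. Here is the intended route: by Lemma~\ref{lem:R[x]} applied inside $B_S^E[r] = (B_S^E)^{[1]}$, the additive polynomial $\lambda(T) \in B_S^E[T]$ has the form $\lambda(T) = \sum_j c_j T^{p^j}$ (additive polynomials in characteristic $p$ are $p$-polynomials; in characteristic $0$, $\lambda(T) = c_1 T$), and in particular $\lambda(\alpha) \in B_S^E$; writing $E'(b) = h(r + \lambda(\alpha)T)$ for $b = h(r)$, I want $h(r + \lambda(\alpha)T) \in B[T]$. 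Since $B$ is a UFD, it suffices to show that for every prime $q \in B$, if $q$ divides every coefficient (in $T$) of $h(r + \lambda(\alpha)T)$ in $B_S$, then already $q \mid b$ in $B$; combined with the fact that $b \in B$ forces the $T^0$-coefficient $h(r) = b$ to be in $B$, this pins down all coefficients. Reducing mod $q$ and invoking that $B/qB$ is algebraically closed in $(B/qB)[r]$ (via Remark~\ref{rem:local}(i), since $r$ is transcendental over $B_S^E \supset$ image considerations), together with Lemma~\ref{lem:Gauss} to control how primitivity of $h$ and of $\lambda(\alpha)T$ interact under composition, should yield the claim. The honest work is bookkeeping: choosing the right multiplicative set, keeping $\lambda(\alpha) \ne 0$ (guaranteed by $\widetilde{E}_\alpha \ne \id$, equivalently $E'$ nontrivial — and if $E' = \id$ the statement is trivial), and checking that primitivity can be tested prime-by-prime over the UFD $B$. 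I would organize it so that Lemma~\ref{lem:Gauss} is applied once, to $f = $ (primitive part of $h$) and $g(T) = \lambda(\alpha)T$ (or more precisely to a rescaled version making $g$ primitive with $g(0) = 0$), concluding that the composite stays primitive and hence no prime of $B$ obstructs descent.
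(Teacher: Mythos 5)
There is a genuine gap. You correctly diagnose why the Vandermonde maneuver of Lemma~\ref{lem:quasi-translation}~(ii) fails for $p>0$ and you correctly guess that Lemma~\ref{lem:Gauss} is the intended replacement, but the way you propose to apply it does not match the structure of the problem. Rescaling $g(T)=\lambda(\alpha)T$ to be primitive just gives (a unit times) $T$, so Lemma~\ref{lem:Gauss} applied with that $g$ says nothing; and pairing it with ``the primitive part of $h$'' is the wrong composition, since $h$ is \emph{evaluated at} $r+\lambda(\alpha)T$, not composed with it. Moreover, the one piece of information that makes the statement true --- that $\widetilde{E}(b)=h(r+\lambda(T))$ lies in $B[T]$ because $E$ is by hypothesis a $\Ga$-action on $B$ --- never actually enters your argument; knowing only that $E'_l=\widetilde{E}_{l\alpha}$ restricts to $B$ for each $l$ is exactly the information the failed Vandermonde argument would have used, and after discarding it you do not replace it. Your reduction ``it suffices to show that if a prime $q$ divides every coefficient then $q\mid b$'' also conflates divisibility of coefficients with membership of elements of $B_S$ in $B$: the obstruction is denominators from $S$, which is precisely what a content/primitive-part decomposition is needed to control.

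The paper's proof supplies the missing construction. Since $B$ is a UFD, factor the additive polynomial as $\lambda(T)=c\lambda_0(T)$ with $\lambda_0(T)\in B[T]$ primitive and $c\in B_S$ (then $c\in B_S^E$ by factorial closedness of $B_S^E$ in $B_S$), and introduce the intermediate $\Ga$-action $E'':h(r)\mapsto h(r+cT)$. Writing $\psi_v$ for substitution of $v$ for $T$, one has $\widetilde{E}=\psi_{\lambda_0(T)}\circ E''$ and $E'=\psi_{\lambda_0(\alpha)T}\circ E''$. If $E''(b)=c'\mu(T)$ with $\mu\in B[T]$ primitive and $c'\in B_S\sm B$, then $\widetilde{E}(b)=c'\mu(\lambda_0(T))$, and Lemma~\ref{lem:Gauss} --- applied to $\mu$ and $\lambda_0$, using $\lambda_0(0)=0$ which follows from $\lambda(0)=0$ by additivity --- shows $\mu(\lambda_0(T))$ is primitive, contradicting $\widetilde{E}(b)\in B[T]$. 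Hence $E''$ restricts to $B$, and then so does $E'=\psi_{\lambda_0(\alpha)T}\circ E''$ because $\lambda_0(\alpha)\in B$. The composition to which Gauss's lemma must be applied is thus in the variable $T$ (namely $\mu\circ\lambda_0$, with $\lambda_0$ of degree a power of $p$ in general), not in the variable $r$, and the intermediate action $E''$ is what makes that comparison possible.
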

\begin{proof}
\nd{\rm (i)} 
The assertion is clear 
from Example~\ref{ex:Ga-action on R[x]}, 
(\ref{eq:modification E}) and (\ref{eq:E'}).

(ii) 
Since $B$ is a UFD and $0\ne \lambda (T)\in B_S^E[T]\subset B_S[T]$, 
we can write $\lambda (T)=c\lambda _0(T)$, 
where $c\in B_S$, 
and $\lambda _0(T)\in B[T]$ is primitive. 
Then, $c$ belongs to $B^E_S$, 
since $B^E_S$ is factorially closed in $B_S=B_S^E[r]$. 
Hence, we can define a $\Ga $-action $E''$ by 
$$
E'':B^E_S[r]\ni h(r)\mapsto h(r+cT)\in B^E_S[r][T]. 
$$
For $v\in B_S[T]$, 
we define $\psi _v:B_S[T]\ni g(T)\mapsto g(v)\in B_S[T]$. 
Then, 
the diagram 
$$
\xymatrix@C=20mm@R=5mm{
&\ar[dl]_-{\widetilde{E}} B_S^E[r] \ar[d]^{E''} \ar[dr]^-{E'} \\
B_S^E[r][T] & \ar[l]^(0.4){\psi _{\lambda _0(T)}} B_S^E[r][T] \ar[r]_(0.45){\psi _{\lambda _0(\alpha )T}} & B_S^E[r][T] }
$$
commutes, 
since $\lambda (T)=c\lambda _0(T)$. 
In the following, 
we show that $E''$ restricts to $B$. 
Then, 
it follows that $E'$ restricts to $B$, 
since $\lambda _0(\alpha )$ is in $B$.

First, 
recall that $E$ is a $\Ga $-action on $B$, 
so $\widetilde{E}(b)$ lies in $B[T]$ for any $b\in B$. 
Now, suppose that $E''$ does not restrict to $B$. 
Pick $b\in B$ with $E''(b)\in B_S[T]\sm B[T]$. 
Write $E''(b)=c'\mu (T)$, 
where $c'\in B_S\sm B$, 
and $\mu (T)\in B[T]$ is primitive. 
Then, 
we have 
$\widetilde{E}(b)
=\psi _{\lambda _0(T)}(E''(b))=c'\mu (\lambda _0(T))$. 
Note that $\lambda _0(0)=0$, 
since $\lambda (0)=0$ by the additivity of $\lambda (T)$. 
Hence, 
$\mu (\lambda _0(T))$ is primitive by Lemma~\ref{lem:Gauss}. 
Since $c'\in B_S\sm B$, 
it follows that $c'\mu (\lambda _0(T))\not\in B[T]$. 
This contradicts $\widetilde{E}(b)\in B[T]$. 
\end{proof}

\begin{example}\label{ex:modification}\rm 
Let $R$ be a UFD with $k:=Q(R)$, 
and $E$ a $\Ga $-action on $\Rx $ over $R$. 
If the extension $\widetilde{E}$ of $E$ to $\kx $ 
has rank one, 
then by Remark~\ref{rem:rank 1}, 
there exist $y_1,\ldots ,y_n\in \kx $ such that 
$\kx =k[y_1,\ldots ,y_n]$ and 
$\kx ^{\widetilde{E}}=k[y_2,\ldots ,y_n]$. 
In this situation, 
the $\Ga $-action 
$E':=\begin{psmallmatrix}
y_1& y_2& \cdots & y_n \\
y_1+(\widetilde{E}_{\alpha }(y_1)-y_1)T& y_2& \cdots & y_n \\
\end{psmallmatrix}$ 
on $\kx $ 
restricts to $\Rx $ for any $\alpha \in \Rx ^E$ 
by Lemma~\ref{prop:Gauss} (ii) with $(S,r):=(R\sm \zs ,y_1)$. 

\end{example}

\subsection{$f$-stability}
Let $R$ be a subring of $B$. 
For $\id \ne \sigma \in \Aut _RB$, 
we define 
\begin{align*}
\cA _R(\sigma )
&:=\{ B^E\mid 
\text{$E$ is a $\Ga $-action on $B$ over $R$ 
s.t.\ $E_\alpha =\sigma $ 
for some $\alpha \in B^E$}\} \\ 
&=\{ B^E\mid 
\text{$E$ is a $\Ga $-action on $B$ over $R$ 
s.t.\ $E_1 =\sigma $}\} ,
\end{align*}
where the last equality is due to 
Remark~\ref{rem:multiple of Ga-action}. 
This is the quotient of 
the set of $\Ga $-actions 
over $R$ inducing $\sigma $ 
by an equivalence relation. 
We remark that 
$A\in \cA _R(\sigma )$ implies $A\subset B^\sigma $, 
since $A=B^E\subset B^{E_1}=B^\sigma $ 
for some $\Ga $-action $E$ on $B$.

Recall that $x$ and $y$ are variables.

\begin{example}\label{ex:def of f-stable}\rm 
Let $A$ be a ring, $f\in A\sm \zs $ 
and $R$ a subring of $A$. 
We define 
$\ep \in \Aut _AA[x]$ by $\ep (x)=x+f$. 
Then, 
$A$ belongs to $\cA _R(\ep )$, 
since the $\Ga $-action $E:A[x]\ni h(x)\mapsto h(x+fT)\in A[x][T]$ 
satisfies $E_1=\ep $ and $A[x]^E=A\supset R$ 
(cf.~Example~\ref{ex:Ga-action on R[x]}). 
\end{example}

\begin{definition}\label{def:a-rigid}\rm 
Let $A$, $f$, $R$ and $\ep $ be as in Example~\ref{ex:def of f-stable}. 
We say that $A$ is $f$-{\it stable} over $R$ 
if $\cA _R(\ep )=\{ A\} $, 
or equivalently, 
there exists no $\Ga $-action $E$ on $A[x]$ over $R$ 
with $E_1=\ep $ and $A[x]^E\ne A$. 
\end{definition}

\begin{rem}\label{rem:f-rigid unit}\rm 
$A$ is $f$-stable over $R$ if and only if $A$ is $uf$-stable over $R$ 
for any $u\in A^*$, 
since $A[ux]=A[x]$ and $\ep (ux)=ux+uf$. 
\end{rem}

\begin{example}\label{ex:a-rigid1}\rm
Let $A=R[y]$ and $f\in R\sm \zs $. 
Then, $A$ is not $f$-stable over $R$, 
since $E:=\begin{psmallmatrix}
x   &y+x^p-f^{p-1}x \\
x+fT&y+x^p-f^{p-1}x 
\end{psmallmatrix}$ 
is a $\Ga $-action on $A[x]
=R[y+x^p-f^{p-1}x][x]$ 
over $R$ 
such that 
$A[x]^E=R[y+x^p-f^{p-1}x]\ne A$ and 
$E_1=
\begin{psmallmatrix}
x   &y+x^p-f^{p-1}x \\
x+f&y+x^p-f^{p-1}x 
\end{psmallmatrix}
=\begin{psmallmatrix}
x   &y \\
x+f&y 
\end{psmallmatrix}
=\ep $ 
(cf.~(\ref{eq:notice})). 
\end{example}

Let $(A_i)_{i\in I}$ be a family of subrings of $B$ 
such that $A_i$ is factorially closed in $B$ 
for each $i\in I$. 
Then, 
$\bigcap _{i\in I}A_i$ 
is also factorially closed in $B$.  
Hence, 
for any subring $B_0$ of $B$, 
there exists a smallest subring $B_1$ of $B$ 
such that $B_0\subset B_1$ 
and $B_1$ is factorially closed in $B$. 
We call $B_1$ the {\it factorial closure} of $B_0$ in $B$. 
Note that $B^*\subset B_1$, 
and $B_1$ is algebraically closed in $B$ 
by the remark after Definition~\ref{defn:fcac}. 
If $b\in B_0\sm \zs $, 
then all factors of $b$ in $B$ belong to $B_1$.

\begin{thm}\label{thm:a-rigid}
Let $A$, $f$ and $R$ be as in Example~$\ref{ex:def of f-stable}$. 
Then, 
$A$ is $f$-stable over $R$ 
if there exists an over ring $\widetilde{A}$ of $A$ 
with the following conditions {\rm (a)} and {\rm (b)}. 

\nd {\rm (a)} 
The factorial closure $\ol{R[f]}$ 
of $R[f]$ in $\widetilde{A}$ 
is equal to $\widetilde{A}$.

\nd {\rm (b)} 
Every $\Ga $-action on $A[x]$ over $R$ extends to 
a $\Ga $-action on $\widetilde{A}[x]$. 
\end{thm}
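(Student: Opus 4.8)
\textbf{Proof proposal for Theorem~\ref{thm:a-rigid}.}
The plan is to argue by contradiction: suppose $A$ is not $f$-stable over $R$, so by Definition~\ref{def:a-rigid} there is a $\Ga$-action $E$ on $A[x]$ over $R$ with $E_1=\ep$ and $A[x]^E\ne A$. By hypothesis~(b), $E$ extends to a $\Ga$-action $\widetilde E$ on $\widetilde A[x]$; since $R\subset A[x]^E\subset \widetilde A[x]^{\widetilde E}$ and, crucially, $f\in \widetilde A[x]^{\widetilde E}$ (because $\ep(x)=x+f$ means $\widetilde E_1(x)-x=f$, and one checks $f=\widetilde E_\alpha(x)-x$ forces $f$ to lie in the invariant ring via the additivity of $\lambda(T)=\widetilde E(x)-x\in \widetilde A[x]^{\widetilde E}[T]$ from Lemma~\ref{lem:R[x]}/Remark~\ref{rem:local}(iv) — so $f$ is the coefficient of $T$ and hence invariant), we get $R[f]\subset \widetilde A[x]^{\widetilde E}$. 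Since $\widetilde A[x]^{\widetilde E}$ is factorially closed in $\widetilde A[x]$ by Remark~\ref{rem:local}(i), and $\widetilde A$ is factorially closed in $\widetilde A[x]$ (it is the coefficient ring of a polynomial ring in $x$), the intersection $\widetilde A\cap \widetilde A[x]^{\widetilde E}$ is factorially closed in $\widetilde A$ and contains $R[f]$; therefore it contains $\ol{R[f]}=\widetilde A$ by~(a). In other words $\widetilde A\subset \widetilde A[x]^{\widetilde E}$.

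Now I would exploit this to pin down the invariant ring of $\widetilde E$. Since $\widetilde A\subset \widetilde A[x]^{\widetilde E}$ and $\widetilde E(x)=x+f$ with $f\in \widetilde A\sm\zs$, Lemma~\ref{lem:invariant ring} (applied over the ring $\widetilde A$, with $a:=f$) gives $\widetilde A[x]^{\widetilde E}\supset \widetilde A[x^p-f^{p-1}x]$; conversely $\widetilde E(x)-x=f$ is a nonzero element of degree $0$ in $x$, so by the standard structure of $\Ga$-actions on $\widetilde A[x]$ over $\widetilde A$ (Example~\ref{ex:Ga-action on R[x]} together with Lemma~\ref{lem:R[x]}) the invariant ring is exactly $\widetilde A[x]^{\widetilde E}=\widetilde A[x^p-f^{p-1}x]$. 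Restricting back, $A[x]^E=A[x]\cap\widetilde A[x]^{\widetilde E}=A[x]\cap\widetilde A[x^p-f^{p-1}x]$. A degree count in $x$ shows any element of $A[x]$ lying in $\widetilde A[x^p-f^{p-1}x]$ already lies in $A[x^p-f^{p-1}x]$: if $g(x)=h(x^p-f^{p-1}x)$ with $h\in\widetilde A[t]$ and $g\in A[x]$, then comparing coefficients of the top powers of $x$ successively (the polynomial $x^p-f^{p-1}x$ is monic of degree $p$, so its powers form an $\widetilde A$-basis in staggered degrees) forces every coefficient of $h$ into $A$. Hence $A[x]^E=A[x^p-f^{p-1}x]$, which does contain $A$ properly; so this does not yet give the contradiction, and I must extract more.

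The extra ingredient is that $E$ is a $\Ga$-action on $A[x]$ \emph{with values in} $A[x][T]$, not merely $\widetilde A[x][T]$. Write, via the explicit form of $\widetilde E$ on $\widetilde A[x]=\widetilde A[x^p-f^{p-1}x][x]$ obtained above, $\widetilde E(x)=x+fT$ (the unique $\Ga$-action over $\widetilde A$ with $\widetilde E_1(x)=x+f$, by the same degree argument applied to the additive polynomial $\lambda(T)\in\widetilde A[T]$: $\lambda(T)=fT$ since $\lambda(1)=f$ and $\deg_x$ considerations force $\lambda$ additive of degree $1$ — more carefully, $\lambda(T)\in\widetilde A[T]$ additive with $\lambda(1)=f$ need not be $fT$ when $p>0$, so here I would instead run the Modification Lemma, Lemma~\ref{prop:Gauss}, with $B=A[x]$ a UFD when $R$ is a UFD — but in the general ring case I argue directly). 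Then for every $b\in A[x]$ we need $\widetilde E(b)\in A[x][T]$. Taking $b=x$ gives $fT$, fine. The point is that $A[x]^E=A[x^p-f^{p-1}x]\subsetneq A$ is \emph{impossible} because $A[x^p-f^{p-1}x]$ properly contains $A$, which contradicts $A[x]^E\subset A[x]^{E_1}=A[x]^\ep=A$ — and $A[x]^\ep=A$ holds since $\ep(g(x))=g(x+f)$ and $g(x+f)=g(x)$ forces $\deg g=0$ as $f\ne 0$ in the domain $A$. Thus $A\subsetneq A[x^p-f^{p-1}x]=A[x]^E\subset A[x]^\ep=A$, a contradiction. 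Hence no such $E$ exists and $A$ is $f$-stable over $R$. \emph{The main obstacle} I anticipate is the step identifying $\widetilde A[x]^{\widetilde E}$ precisely: over a ring of positive characteristic an additive polynomial $\lambda(T)\in\widetilde A[T]$ with $\lambda(1)=f$ can be more complicated than $fT$, so I expect the clean route is to assume $\widetilde A$ is a UFD (or pass to $R$ a UFD as in Example~\ref{ex:modification}) and invoke Lemma~\ref{prop:Gauss}(ii) to replace $\widetilde E$ by the standard translation $x\mapsto x+fT$; reconciling the fully general ring statement with this may require either an additional hypothesis or a more delicate coefficient argument, and that is where I would concentrate the effort.
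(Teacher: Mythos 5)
Your middle step is the paper's: show $R[f]\subset\widetilde A[x]^{\widetilde E}$, note that $\widetilde A[x]^{\widetilde E}\cap\widetilde A$ is a factorially closed subring of $\widetilde A$ containing $R[f]$, and invoke (a) to conclude $\widetilde A\subset\widetilde A[x]^{\widetilde E}$. But both ends of your argument have real problems. At the entry, you justify $f\in\widetilde A[x]^{\widetilde E}$ by asserting $\lambda(T):=\widetilde E(x)-x\in\widetilde A[x]^{\widetilde E}[T]$ ``from Lemma~\ref{lem:R[x]}/Remark~\ref{rem:local}~(iv)''. Neither applies: Lemma~\ref{lem:R[x]} presupposes that the action is over the coefficient ring (i.e.\ $\widetilde A\subset\widetilde A[x]^{\widetilde E}$ --- the very thing being proved), and Remark~\ref{rem:local}~(iv) concerns the distinguished local slice $r$ of Theorem~\ref{prop:local}, not an arbitrary element such as $x$; in general $E(b)-b$ does not have invariant coefficients (look at $E(x_2)$ in Example~\ref{ex:matrix notation}). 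The paper gets $f$ into the invariant ring differently: take the local slice $r$, so that $\ep (r)-r=E_1(r)-r\in A[x]^E\sm \zs $ by Remark~\ref{rem:local}~(v); this element lies in $I(\ep )=fA[x]$ by Lemma~\ref{lem:fixed points}; factorial closedness of $A[x]^E$ in $A[x]$ then yields $f\in A[x]^E\subset \widetilde A[x]^{\widetilde E}$. That is the idea your proposal is missing.

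At the exit, the argument breaks down. Having obtained $\widetilde A\subset\widetilde A[x]^{\widetilde E}$, you apply Lemma~\ref{lem:invariant ring} to conclude $\widetilde A[x]^{\widetilde E}=\widetilde A[x^p-f^{p-1}x]$; but that lemma computes the invariant ring of the \emph{automorphism} $\ep =\widetilde E_1$, not of the \emph{$\Ga $-action} $\widetilde E$, and these differ drastically when $p>0$ (Remark~\ref{rem:local}~(vi)). Conflating them leads you to ``$A[x]^E=A[x^p-f^{p-1}x]$'', which is the negation of the theorem's conclusion. The contradiction you then extract rests on the claim $A[x]^{\ep }=A$, which is false in characteristic $p$: by Lemma~\ref{lem:invariant ring} itself, $A[x]^{\ep }=A[x^p-f^{p-1}x]\supsetneq A$ (your ``$g(x+f)=g(x)$ forces $\deg g=0$'' is a characteristic-zero argument). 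The whole detour through the explicit form of $\widetilde E$, UFD hypotheses and Lemma~\ref{prop:Gauss} is unnecessary, and would in any case weaken the statement, which is for arbitrary rings. The correct finish is short: once $\widetilde A\subset\widetilde A[x]^{\widetilde E}$, intersect with $A[x]$ to get $A\subset A[x]\cap\widetilde A[x]^{\widetilde E}=A[x]^E$; since $A[x]^E$ is algebraically closed in $A[x]$ (Remark~\ref{rem:local}~(iii)) and $E_1=\ep \ne \id $, it cannot contain any element of $A[x]\sm A$, so $A[x]^E=A$, as required.
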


Before proving Theorem~\ref{thm:a-rigid}, 
let us look at some examples.

\begin{example}\label{ex:g-rigid}\rm
(i) Let $f:=\alpha +\beta x_1^{i_1}\cdots x_n^{i_n}\in \Rx $, 
where $\alpha,\beta \in R$ with $\beta \ne 0$ 
and $i_1,\ldots ,i_n\ge 1$. 
Then, 
the factorial closure of $R[f]$ in $\Rx $ is equal to $\Rx $. 
Hence, 
$\Rx $ itself is $f$-stable over $R$.

\nd (ii) 
Let $f\in R[y]\sm R$. 
If $\bar{k}$ is an algebraic closure of $Q(R)$, 
then the factorial closure of $R[f]$ in $\bar{k}[y]$ 
is $\bar{k}[y]$. 
Since every $\Ga $-action $E$ on $R[y][x]$ over $R$ 
extends to the $\Ga $-action $\id _{\bar{k}}\otimes E$ 
on $\bar{k}\otimes _RR[y][x]=\bar{k}[y][x]$, 
we see that $R[y]$ is $f$-stable over $R$. 
\end{example}

For $\sigma \in \Aut B$, 
we define $I(\sigma )$ to be the ideal of $B$ 
generated by $\{ \sigma (b)-b\mid b\in B\} $.

\begin{lem}\label{lem:fixed points}
Assume that $B=R[b_1,\ldots ,b_n]$, 
where $R$ is a subring of $B$ 
and $b_1,\ldots ,b_n\in B$. 
Then, 
for each $\sigma \in \Aut _RB$, 
we have 
$I(\sigma )=\sum _{i=1}^n(\sigma (b_i)-b_i)B$. 
\end{lem}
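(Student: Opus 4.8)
\textbf{Proof plan for Lemma~\ref{lem:fixed points}.}
The inclusion $\sum_{i=1}^n(\sigma(b_i)-b_i)B\subseteq I(\sigma)$ is immediate, since each $\sigma(b_i)-b_i$ is one of the generators of $I(\sigma)$. So the work is entirely in the reverse inclusion: I must show that for an \emph{arbitrary} $b\in B$, the difference $\sigma(b)-b$ lies in the ideal $\mathfrak{a}:=\sum_{i=1}^n(\sigma(b_i)-b_i)B$. The natural approach is to write $b$ as a polynomial expression in $b_1,\dots,b_n$ with coefficients in $R$ (possible by the hypothesis $B=R[b_1,\dots,b_n]$) and to argue that $\sigma$ acts trivially modulo $\mathfrak{a}$ on each $b_i$, hence on any such polynomial.

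First I would observe that $\mathfrak{a}$ is a $\sigma$-stable ideal: for any generator $c=\sigma(b_j)-b_j$ and any $g\in B$ one has $\sigma(cg)-cg=(\sigma(c)-c)\sigma(g)+c(\sigma(g)-g)$, and $\sigma(c)-c=\sigma(c)-c\in I(\sigma)$; but to stay inside $\mathfrak{a}$ rather than $I(\sigma)$ I should instead argue at the level of the quotient. The cleanest route: consider the quotient ring $\bar B:=B/\mathfrak{a}$ and the map $\bar\sigma$ on $\bar B$ induced by $\sigma$ — for this to make sense I first check $\sigma(\mathfrak{a})\subseteq\mathfrak{a}$, using that $\sigma(\sigma(b_i)-b_i)-(\sigma(b_i)-b_i)\in I(\sigma)$; hmm, this again lands in $I(\sigma)$, not obviously in $\mathfrak{a}$. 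The honest fix is to prove the two statements together by a single induction, or to note that $\sigma$ is an automorphism so $\sigma(\mathfrak a)=\sum(\sigma^2(b_i)-\sigma(b_i))B$ and $\sigma^2(b_i)-\sigma(b_i)=\sigma(\sigma(b_i)-b_i)$, which lies in $\mathfrak a$ precisely because $\sigma(b_i)-b_i\in\mathfrak a$ and $\mathfrak a$ is an ideal — so $\sigma(\mathfrak a)\subseteq\mathfrak a$ does hold, and by symmetry (applying the same to $\sigma^{-1}$, noting $\sigma^{-1}(b_i)-b_i=-\sigma^{-1}(\sigma(b_i)-b_i)\in\mathfrak a$) we get $\sigma(\mathfrak a)=\mathfrak a$. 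Thus $\bar\sigma\in\Aut\bar B$ is well defined.

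With $\bar B$ and $\bar\sigma$ in hand, the key point is that $\bar\sigma(\bar b_i)=\bar b_i$ for every $i$, by the very definition of $\mathfrak{a}$. Since $\bar\sigma$ is a ring homomorphism fixing $R$ (as $\sigma\in\Aut_RB$) and fixing each generator $\bar b_i$ of $\bar B=R[\bar b_1,\dots,\bar b_n]$, it fixes every element of $\bar B$; that is, $\bar\sigma=\id_{\bar B}$. Translating back, $\sigma(b)-b\in\mathfrak{a}$ for all $b\in B$, which gives $I(\sigma)\subseteq\mathfrak{a}$ and completes the proof.

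The only genuine subtlety — and the step I would be most careful about — is the verification that $\mathfrak{a}$ is $\sigma$-stable, so that $\bar\sigma$ is defined; everything else is formal. An alternative that sidesteps even this is purely computational: expand $b=P(b_1,\dots,b_n)$ with $P$ a polynomial over $R$, write $\sigma(b)=P(\sigma(b_1),\dots,\sigma(b_n))$, substitute $\sigma(b_i)=b_i+(\sigma(b_i)-b_i)$, and expand multilinearly — every term in $\sigma(b)-b$ then visibly contains at least one factor $\sigma(b_i)-b_i$, hence lies in $\mathfrak a$. This telescoping expansion is elementary and avoids any discussion of quotients, so I would likely present it as the main argument and relegate the quotient-ring phrasing to a remark.
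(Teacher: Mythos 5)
Your final ``purely computational'' argument --- write $b=P(b_1,\dots,b_n)$ with $P$ over $R$, substitute $\sigma(b_i)=b_i+(\sigma(b_i)-b_i)$, and observe that every surviving term of $\sigma(b)-b$ carries a factor $\sigma(b_i)-b_i$ --- is exactly the paper's proof, and it is complete and correct. One caution about the quotient-ring variant you sketch first: the step asserting that $\sigma(\sigma(b_i)-b_i)$ lies in $\mathfrak{a}$ ``because $\sigma(b_i)-b_i\in\mathfrak{a}$ and $\mathfrak{a}$ is an ideal'' is circular, since membership in an ideal is not preserved by an automorphism unless the ideal is already known to be $\sigma$-stable, which is precisely what is being proved; the honest repair is to apply the same telescoping expansion to $\sigma(b_i)\in R[b_1,\dots,b_n]$, so the quotient argument secretly rests on the computational one. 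Your decision to present the expansion as the main proof is therefore the right call.
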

\begin{proof}
Set $t_i:=\sigma (b_i)-b_i$ for $i=1,\ldots ,n$. 
Pick any $b=g(b_1,\ldots ,b_n)\in B$, 
where $g\in \Rx $. 
Then, 
$\sigma (b)-b=
g(b_1+t_1,\ldots ,b_n+t_n)-
g(b_1,\ldots ,b_n)$ 
belongs to $\sum _{i=1}^nt_iB$. 
This proves $I(\sigma )\subset \sum _{i=1}^nt_iB$. 
The reverse inclusion is clear. 
\end{proof}

\begin{example}\rm
For $\ep \in \Aut _AA[x]$ in Example~\ref{ex:def of f-stable}, 
we have $I(\ep )=fA[x]$. 
\end{example}

\begin{proof}[Proof of Theorem~$\ref{thm:a-rigid}$]
Pick any $\Ga $-action $E$ on $A[x]$ over $R$ with $E_1=\ep $. 
Our goal is to show that $A[x]^E=A$. 
For this purpose, 
it suffices to check that 
$A\subset A[x]^E$, 
for if $A\subsetneq A[x]^E$, 
then $A[x]$ is algebraic over $A[x]^E$, 
so $A[x]^E=A[x]$ by Remark~\ref{rem:local} (iii). 
This contradicts that $E_1=\ep \ne \id $.

Choose $r\in A[x]$ as in Theorem~\ref{prop:local}. 
Then, 
$b:=\ep (r)-r=E_1(r)-r$ belongs to $A[x]^E\sm \zs $ 
by Remark~\ref{rem:local} (v). 
By definition, 
$b$ also belongs to $I(\ep )=fA[x]$. 
Thus, we have 
$fg=b\in A[x]^E\sm \zs $ for some $g\in A[x]$. 
Since $A[x]^E$ is factorially closed in $A[x]$, 
it follows that $f\in A[x]^E$. 
Hence, we have $R[f]\subset A[x]^E$, 
since $R\subset A[x]^E$.

By (b), 
$E$ extends to a $\Ga $-action $\widetilde{E}$ on $\widetilde{A}[x]$. 
Then, 
we have $R[f]\subset A[x]^E\subset \widetilde{A}[x]^{\widetilde{E}}$, 
and so $R[f]\subset \widetilde{A}[x]^{\widetilde{E}}\cap \widetilde{A}$. 
Since $\widetilde{A}[x]^{\widetilde{E}}$ 
is factorially closed in $\widetilde{A}[x]$, 
we see that $\widetilde{A}[x]^{\widetilde{E}}\cap \widetilde{A}$ 
is factorially closed in $\widetilde{A}$. 
Thus, 
$\widetilde{A}=\ol{R[f]}\subset 
\widetilde{A}[x]^{\widetilde{E}}\cap \widetilde{A}$ 
holds by (a). 
Then, 
intersecting with $A[x]$, 
we obtain 
$A\subset A[x]\cap \widetilde{A}\subset 
A[x]\cap \widetilde{A}[x]^{\widetilde{E}}=A[x]^E$. 
\end{proof}

\subsection{Non-exponentiality criterion}
Let $B$ be a UFD, 
$R$ a subring of $B$ and $S:=R\sm \zs $. 
Pick any $S$-quasi-translation $\sigma $ of $B$. 
Then, there exists a subring $A$ of $B_S$ 
and $r\in B_S$ for which $B_S=A[r]=A^{[1]}$ 
and $\sigma $ extends to 
$\widetilde{\sigma }\in \Aut _AA[r]$ 
with $f:=\widetilde{\sigma }(r)-r\in A$: 
$$
\vcenter{
\xymatrix@R=0.2mm@C=1mm
{
A[r]\ar@{-)}[r] & h(r) \ar@{|->}[rr]^-{\widetilde{\sigma }} & & h(r+f)\ar@{(-}[r]& A[r]\\
\cup  & & & & \cup \\
B \ar[rrrr]^-{\sigma } &  & & & B
}}
$$
We note that $A$ contains $k:=Q(R)$, 
since $k\subset B_S$ and $(B_S)^*=A[r]^*=A^*$. 
We define a $\Ga $-action $\widetilde{E}$ on $B_S=A[r]$ over $A$ by 
\begin{equation}\label{eq:E in non-exp criterion}
\widetilde{E}:A[r]\ni h(r)\mapsto h(r+fT)\in A[r][T]. 
\end{equation}
Then, we have $\widetilde{E}_1=\widetilde{\sigma }$. 
Hence, 
if $\widetilde{E}$ restricts to $B$, 
then $\sigma $ is exponential over $R$.

\begin{thm}\label{thm:general criterion}
In the notation above, 
assume that $A$ is $f$-stable over $k$. 
If $\sigma $ is exponential over $R$, 
then the $\Ga $-action $\widetilde{E}$ 
in {\rm (\ref{eq:E in non-exp criterion})} 
restricts to $B$. 
\end{thm}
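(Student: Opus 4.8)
The plan is to start from an arbitrary $\Ga$-action $F$ on $B$ over $R$ with $F_1=\sigma$ (such an $F$ exists because $\sigma$ is exponential over $R$), pass to the localization $B_S$, and compare the extension $\widetilde F$ of $F$ to $B_S$ with the explicit $\Ga$-action $\widetilde E$ of (\ref{eq:E in non-exp criterion}). The key observation will be that the invariant ring $B_S^{\widetilde F}$ is a subring of $B_S=A[r]=A^{[1]}$ which is factorially closed (Remark~\ref{rem:local}~(i)) and contains $k=Q(R)$; moreover $F_1=\sigma$ restricts from $B$ to $B_S$ to give exactly $\widetilde\sigma$, so $B_S^{\widetilde F}\subset B_S^{\widetilde\sigma}$. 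I would first argue that $\widetilde F$, viewed as a $\Ga$-action on $A[r]$, together with the element $r$, fits the hypotheses of the $f$-stability setup: writing $C:=B_S^{\widetilde F}$, one sees $\widetilde F$ induces a $\Ga$-action on $C[x]$-type data only after checking that $C=A$. That equality is precisely what $f$-stability of $A$ over $k$ should deliver.

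The main step is therefore to set up the application of $f$-stability correctly. I would take the $\Ga$-action $\widetilde F$ on $B_S=A[r]$ and observe that, since $\widetilde\sigma(r)-r=f\in A$, the pair $(\widetilde E,\widetilde F)$ are two $\Ga$-actions on the polynomial ring $A[r]$ over $k$ (both fixing $k$, since $k\subset A$) whose time-$1$ maps agree with $\widetilde\sigma$. Concretely, $\widetilde F$ is a $\Ga$-action on $A[x]$ (renaming $r$ as $x$) over $k$ with $(\widetilde F)_1=\varepsilon$ in the notation of Example~\ref{ex:def of f-stable}. By Definition~\ref{def:a-rigid}, the $f$-stability of $A$ over $k$ forces $A[x]^{\widetilde F}=A$, i.e. $B_S^{\widetilde F}=A$. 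This is the crux: once we know $B_S^{\widetilde F}=A=B_S^{\widetilde E}$ and both $\Ga$-actions have the same invariant ring and the same time-$1$ automorphism $\widetilde\sigma$, Remark~\ref{rem:multiple of Ga-action} (or a direct argument via (\ref{eq:modification E})) shows that $\widetilde F=\widetilde E$ as maps $A[r]\to A[r][T]$.

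It then remains to descend: $\widetilde F$ restricts to $B$ by construction (it is the extension of the genuine $\Ga$-action $F$ on $B$ to $B_S$), and $\widetilde F=\widetilde E$, so $\widetilde E$ restricts to $B$ as well, which is the conclusion. The one subtlety I would be careful about is the uniqueness claim "$\widetilde F=\widetilde E$": one must check that a $\Ga$-action on $A[x]$ over $A$ whose invariant ring contains $A$ is determined by its value on $x$, and that both $\widetilde E(x)=x+fT$ and $\widetilde F(x)=x+(\widetilde\sigma(x)-x)T=x+fT$ by Lemma~\ref{lem:R[x]} together with additivity of $\lambda(T)=\widetilde F(x)-x$ and the normalization $\lambda(1)=f$; since $\lambda(T)\in A[T]$ is additive, in characteristic $p$ it need not be linear, so the identification requires knowing that $\widetilde F$ is a homomorphism of $A$-algebras — which it is, as $A\subset B_S^{\widetilde F}$.

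The step I expect to be the main obstacle is verifying that $A\subset B_S^{\widetilde F}$, equivalently that the chosen subring $A$ (coming from the quasi-translation structure of $\sigma$, not from $F$) is contained in the invariant ring of the a priori unrelated $\Ga$-action $\widetilde F$. This is not automatic: $A$ is only known to satisfy $A\subset B_S^{\widetilde\sigma}$, and the invariant ring of a $\Ga$-action can be strictly smaller than that of its time-$1$ map in characteristic $p$ (Remark~\ref{rem:local}~(vi)). The resolution will be to run the $f$-stability argument in the opposite direction: one does not need $A\subset B_S^{\widetilde F}$ a priori; instead, following the proof of Theorem~\ref{thm:a-rigid}, pick $r'\in B_S$ as in Theorem~\ref{prop:local} for $\widetilde F$, note $\varepsilon(r')-r'\in I(\varepsilon)=fA[x]$ forces $f\in B_S^{\widetilde F}$ by factorial closedness, hence $k[f]\subset B_S^{\widetilde F}$, and then invoke $f$-stability of $A$ over $k$ — which by Definition~\ref{def:a-rigid} says $\cA_k(\varepsilon)=\{A\}$, so $B_S^{\widetilde F}=A$ directly, bypassing the need to establish containment by hand.
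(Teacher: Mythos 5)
Your reduction to the equality $B_S^{\widetilde F}=A$ is exactly the paper's first step: $f$-stability of $A$ over $k$ says $\cA_k(\widetilde\sigma)=\{A\}$, and since $\widetilde F_1=\widetilde\sigma$, the invariant ring $B_S^{\widetilde F}$ belongs to $\cA_k(\widetilde\sigma)$ and hence equals $A$. Your worry about establishing $A\subset B_S^{\widetilde F}$ by hand is correctly resolved the way you indicate at the end (the definition of $f$-stability gives the equality directly). Up to this point the proposal matches the paper.

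The gap is the claim $\widetilde F=\widetilde E$. Setting $\lambda(T):=\widetilde F(r)-r\in A[T]$, Lemma~\ref{lem:R[x]} gives only that $\lambda$ is additive, and in characteristic $p$ an additive polynomial is a $p$-polynomial $\sum_i c_iT^{p^i}$; the normalization $\lambda(1)=f$ does not force $\lambda(T)=fT$. Your attempted fix --- that $\widetilde F$ is a homomorphism of $A$-algebras --- is irrelevant to linearity in $T$: for instance $h(r)\mapsto h(r+T^p)$ is a perfectly good $\Ga$-action on $A[r]$ over $A$ whose time-one map is $r\mapsto r+1$, and it differs from $h(r)\mapsto h(r+T)$ although both have invariant ring $A$ and the same time-one automorphism. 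So in general $\widetilde F\ne\widetilde E$, and you cannot conclude that $\widetilde E$ restricts to $B$ merely because $\widetilde F$ does. This is precisely where the paper invokes the Modification Lemma: $\widetilde E$ is the linearization $E'$ of $\widetilde F$ at $\alpha=1$ (since $f=\widetilde F_1(r)-r=\lambda(1)$), and Lemma~\ref{prop:Gauss}~(ii) shows that $E'$ restricts to $B$ when $B$ is a UFD, by factoring $\lambda(T)=c\lambda_0(T)$ with $\lambda_0$ primitive and running the Gauss-lemma argument of Lemma~\ref{lem:Gauss}. The fact that your proof never uses the UFD hypothesis on $B$ is a symptom of this gap: that hypothesis is exactly what makes the passage from $\widetilde F$ to $\widetilde E$ legitimate.
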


\begin{proof}
Note that $\cA _k(\widetilde{\sigma })=\{ A\} $ 
by the $f$-stability of $A$. 
By assumption, 
there exists 
a $\Ga $-action $F$ on $B$ over $R$ with $F_1=\sigma $. 
Since $S=R\sm \zs \subset B^F$, 
we can extend $F$ to a $\Ga $-action $\widetilde{F}$ 
on $B_S=A[r]$ over $k$. 
Then, 
$\widetilde{F}_1$ is equal to $\widetilde{\sigma }$. 
Hence, 
$B_S^F:=(B_S)^{\widetilde{F}}$ 
belongs to $\cA _k(\widetilde{\sigma })$. 
Thus, we get $A=B_S^F$. 
Then, we notice that 
(\ref{eq:E in non-exp criterion}) is the same as (\ref{eq:E'}) 
with $(E,\alpha ):=(F,1)$, 
since $f=\widetilde{\sigma }(r)-r=\widetilde{F}_1(r)-r$. 
Therefore, 
(\ref{eq:E in non-exp criterion}) restricts to $B$ 
thanks to Lemma~\ref{prop:Gauss} (ii), 
since $B$ is a UFD by assumption. 
\end{proof}

Finally, 
let $\sigma $ be a generic elementary automorphism of $\Rx $, 
which is an $(R\sm \zs )$-quasi-translation of $\Rx $. 
We define $f:=\widetilde{\sigma }(y_1)-y_1\in A=k[y_2,\ldots ,y_n]$ 
in the notation of Example~\ref{ex:gea}. 
Then, we have the following corollary to 
Theorem~\ref{thm:general criterion}.

\begin{cor}\label{cor:general criterion}
Assume that $R$ is a UFD 
and $A$ is $f$-stable over $k$. 
If the $\Ga $-action 
$\begin{psmallmatrix}
y_1  & y_2 & \cdots  & y_n  \\
y_1+fT & y_2 & \cdots  & y_n 
\end{psmallmatrix}
:A[y_1]\ni h(y_1)\mapsto h(y_1+fT)\in A[y_1][T]$
on $A[y_1]=\kx $ 
does not restrict to $\Rx $, 
then $\sigma $ is not exponential over $R$. 
\end{cor}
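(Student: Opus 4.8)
The plan is to derive this as a direct consequence of Theorem~\ref{thm:general criterion}, by checking that its hypotheses are met in the setting of a generic elementary automorphism. Recall from Example~\ref{ex:gea} that $\sigma$ being a generic elementary automorphism means there are $y_1,\ldots,y_n\in\kx$ with $\kx=k[y_1,\ldots,y_n]$ such that the extension $\widetilde{\sigma}\in\Aut_k\kx$ satisfies $\widetilde{\sigma}(y_1)-y_1\in A:=k[y_2,\ldots,y_n]$ and $\widetilde{\sigma}\in\Aut_AA[y_1]$; in particular $\sigma$ is an $(R\sm\zs)$-quasi-translation of $\Rx$ over $R$, with the data $(A,r)=(k[y_2,\ldots,y_n],y_1)$ and $f=\widetilde{\sigma}(y_1)-y_1$ playing exactly the roles of $A$, $r$, $f$ in the paragraph preceding Theorem~\ref{thm:general criterion}. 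Here $B=\Rx$, which is a UFD since $R$ is a UFD by hypothesis, and $B_S=\kx$ with $S=R\sm\zs$.

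First I would verify that the $\Ga$-action $\widetilde{E}$ of (\ref{eq:E in non-exp criterion}) is precisely the $\Ga$-action displayed in the statement of the corollary: with $A[r]=k[y_2,\ldots,y_n][y_1]=\kx$ and $f\in A$, the map $h(y_1)\mapsto h(y_1+fT)$ is the two-line notation $\begin{psmallmatrix} y_1 & y_2 & \cdots & y_n \\ y_1+fT & y_2 & \cdots & y_n\end{psmallmatrix}$, and it indeed satisfies $\widetilde{E}_1=\widetilde{\sigma}$ because $\widetilde{\sigma}$ fixes each $y_i$ for $i\ge 2$ and sends $y_1$ to $y_1+f$. Then I would apply Theorem~\ref{thm:general criterion}: since $A$ is $f$-stable over $k$ by hypothesis, that theorem tells us that if $\sigma$ were exponential over $R$, then $\widetilde{E}$ would restrict to $B=\Rx$. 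Taking the contrapositive: if $\widetilde{E}$ does not restrict to $\Rx$ — which is exactly the hypothesis of the corollary — then $\sigma$ is not exponential over $R$. This completes the argument.

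There is essentially no obstacle here; the corollary is a packaging of Theorem~\ref{thm:general criterion} specialized to generic elementary automorphisms. The only point requiring a line of care is matching notation: confirming that $B=\Rx$ is a UFD (needed so that Theorem~\ref{thm:general criterion} applies, and it does since $R$ is assumed a UFD and a polynomial ring over a UFD is a UFD), that $A$ contains $k=Q(R)$ as required in the setup (which holds by construction, $A=k[y_2,\ldots,y_n]$), and that $f$ as defined in the corollary agrees with $\widetilde{\sigma}(r)-r$ for $r=y_1$. Once these identifications are in place, the conclusion is immediate from Theorem~\ref{thm:general criterion}.

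\begin{proof}
Since $\sigma$ is a generic elementary automorphism of $\Rx$, by Example~\ref{ex:gea} there exist $y_1,\ldots,y_n\in\kx$ with $\kx=k[y_1,\ldots,y_n]$ such that, writing $A:=k[y_2,\ldots,y_n]$, the extension $\widetilde{\sigma}\in\Aut_k\kx$ of $\sigma$ satisfies $f:=\widetilde{\sigma}(y_1)-y_1\in A$ and $\widetilde{\sigma}\in\Aut_AA[y_1]$; in particular $\sigma$ is an $S$-quasi-translation of $\Rx$ over $R$ for $S:=R\sm\zs$, with $B:=\Rx$, $B_S=\kx=A[y_1]=A^{[1]}$, $r:=y_1$, and $\widetilde{\sigma}(r)-r=f\in A$. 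These are exactly the data in the setup preceding Theorem~\ref{thm:general criterion}, and $A$ contains $k=Q(R)$ by construction. Moreover, $B=\Rx$ is a UFD because $R$ is a UFD by assumption. The $\Ga$-action $\widetilde{E}$ of (\ref{eq:E in non-exp criterion}) is then $h(y_1)\mapsto h(y_1+fT)$ on $A[y_1]=\kx$, which is precisely the $\Ga$-action displayed in the statement, and it satisfies $\widetilde{E}_1=\widetilde{\sigma}$. By hypothesis $A$ is $f$-stable over $k$, so Theorem~\ref{thm:general criterion} applies: if $\sigma$ were exponential over $R$, then $\widetilde{E}$ would restrict to $B=\Rx$. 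Since by assumption $\widetilde{E}$ does not restrict to $\Rx$, we conclude that $\sigma$ is not exponential over $R$.
\end{proof}
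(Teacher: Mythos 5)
Your proposal is correct and follows exactly the route the paper intends: the paper offers no separate proof of this corollary, presenting it as an immediate consequence of Theorem~\ref{thm:general criterion} applied with the data $(B,S,A,r,f)=(\Rx ,R\sm \zs ,k[y_2,\ldots ,y_n],y_1,\widetilde{\sigma }(y_1)-y_1)$ coming from Example~\ref{ex:gea}. Your careful matching of the notation (that $\Rx $ is a UFD, that $A\supset k$, and that the displayed $\Ga $-action is the $\widetilde{E}$ of (\ref{eq:E in non-exp criterion})) is exactly the verification needed, and the contrapositive step is sound.
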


\subsection{Non-exponential generic elementary automorphisms}
\label{subsect:non exp aut}

Let $\Rxyz $ be the polynomial ring in $2+l$ variables over $R$, 
where $l\ge 0$, $\z :=z_1,\ldots ,z_l$, 
and $R$ is not a field. 
Pick any $u\in R\sm (R^*\cup \zs )$, 
and $d\in \Z \sm p\Z $ with $d\ge 2$. 
We define 
\begin{equation}\label{gather:c}
\begin{gathered}
 a:=p-2+(p-1)^2(d-1),\quad b:=(p+1)(d-1)+1,  \\ 
c:=pa+p(p-1)(d-1)=p(p-2)+p^2(p-1)(d-1),\\ 
\lambda :=u^{-p-1}y^{p(p-1)}+u^{-2}y^{pa+1},\quad 
\widetilde{x}:=x+\lambda \quad \text{and}\quad 
\widetilde{y}:=y+\widetilde{x}^d.  
\end{gathered}
\end{equation}
Put $(u^e):=u^e\Rxyz [T]$ for $e\ge 0$. 
Then, we note that 

\nd 
($1^\star $) 
$u^{p+1}\widetilde{x}\in y^{p(p-1)}+(u)$ and 
$u^b\widetilde{x}^{d-1}
=u(u^{p+1}\widetilde{x})^{d-1}
\in uy^{p(p-1)(d-1)}+(u^2)$;

\nd 
($2^\star $) 
$u^{(p+1)d}\widetilde{y}
=u^{(p+1)d}y+(u^{p+1}\widetilde{x})^d
\in R[x,y]$.

Now, 
pick any $g\in R[u^{(p+1)d}\widetilde{y},\z ]$. 
Then, 
$g$ belongs to $\Rxyz $ by ($2^\star $). 
We define a $\Ga $-action $E^g$ on 
$\kxyz =k[\widetilde{x},\widetilde{y},\z]$ 
over $k:=Q(R)$ by 
\begin{equation}\label{eq:non-exp}
E^g:=\begin{pmatrix*}
\widetilde{x}  & \widetilde{y} & z_1 & \cdots  & z_l  \\
\widetilde{x}+u^b(1+ug)T & \widetilde{y} & z_1 & \cdots  & z_l 
\end{pmatrix*}. 
\end{equation}

With this notation, 
the following proposition holds.

\begin{prop}\label{prop:non-exp}
$E^g_1$ restricts to $\Rxyz $, 
but $E^g$ does not restrict to $\Rxyz $. 
\end{prop}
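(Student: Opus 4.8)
The plan is to prove the two assertions of Proposition~\ref{prop:non-exp} separately, computing $E^g(\widetilde{x})$, $E^g(\widetilde{y})$, $E^g(x)$, $E^g(y)$ explicitly modulo suitable powers of $u$. First I would record that $E^g$ is indeed a $\Ga $-action on $\kxyz $ over $k$: it has the shape of Example~\ref{ex:Ga-action on R[x]}, because $g\in k[\widetilde{y},\z ]=\kxyz ^{E^g}$ forces $u^b(1+ug)\in k[\widetilde{y},\z ]\sm \zs $ and hence $E^g$ is a genuine translation of $k[\widetilde{x},\widetilde{y},\z ]=k[\widetilde{y},\z ][\widetilde{x}]$. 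So $E^g$ is well defined, and $E^g_1$ is the automorphism $\widetilde{x}\mapsto \widetilde{x}+u^b(1+ug)$, fixing $\widetilde{y}$ and the $z_i$.

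For the first assertion, that $E^g_1$ restricts to $\Rxyz $, I would compute $E^g_1(x)=x+\lambda -E^g_1(\lambda )$ from $E^g_1(\widetilde{x})=\widetilde{x}+u^b(1+ug)$ and $E^g_1(\widetilde{x})=E^g_1(x)+E^g_1(\lambda )$, and similarly $E^g_1(y)=y+\widetilde{x}^d-E^g_1(\widetilde{x})^d$ from $E^g_1(\widetilde{y})=\widetilde{y}$. The point is that $\lambda =u^{-p-1}y^{p(p-1)}+u^{-2}y^{pa+1}$ has denominators only $u^{p+1}$ and $u^2$, while the increment $u^b(1+ug)$ carries the high power $u^b$; substituting and expanding, every negative power of $u$ is cancelled. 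Concretely, $E^g_1(\lambda )-\lambda $ is a polynomial in $y$ and $u^b(1+ug)$ with coefficients in $\Z [u^{-p-1},u^{-2}]$, and since $b=(p+1)(d-1)+1\ge p+1$ and (more carefully) the terms arising from $u^{-2}y^{pa+1}$ pick up factors $u^{b}$ with $b\ge 2$, everything lands in $\Rxyz $; here I would use ($1^\star $) and ($2^\star $) to see the relevant integrality. Likewise $E^g_1(\widetilde{x})^d-\widetilde{x}^d$ expands into terms each divisible by $u^b(1+ug)$, and $(u^b\widetilde{x}^{d-1})\in \Rxyz $ by ($1^\star $), $u^{(p+1)d}\widetilde{y}\in R[x,y]$ by ($2^\star $); tracking the powers of $u$ shows $E^g_1(y)\in \Rxyz $. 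Since $E^g_1$ fixes the $z_i$ and $E^g_1(x),E^g_1(y)\in \Rxyz $, it restricts.

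For the second assertion, that $E^g$ itself does not restrict to $\Rxyz $, I would exhibit a witness: compute $E^g(y)$, or rather its lowest-order behavior in $u$, and show a monomial with a genuinely negative power of $u$ survives. From $E^g(\widetilde{y})=\widetilde{y}$ we get $E^g(y)=y+\widetilde{x}^d-E^g(\widetilde{x})^d$ with $E^g(\widetilde{x})=\widetilde{x}+u^b(1+ug)T$, so $E^g(y)=y-\bigl((\widetilde{x}+u^b(1+ug)T)^d-\widetilde{x}^d\bigr)$; the highest-degree-in-$T$ term is $-u^{bd}(1+ug)^dT^d$, which is harmless, but the linear-in-$T$ term is $-d\widetilde{x}^{d-1}u^b(1+ug)T$, and $u^b\widetilde{x}^{d-1}\in uy^{p(p-1)(d-1)}+(u^2)$ is again fine. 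The obstruction must therefore come from $E^g(x)=x+\lambda -E^g(\lambda )$: expanding $E^g(u^{-2}y^{pa+1})$ uses $E^g(y)$, which after one substitution contains $\widetilde{x}^{d-1}$-type terms whose $u$-adic valuation, once divided by $u^2$ and by $u^{p+1}$, is chosen (this is the role of the exact values of $a$, $b$, $c$, $d$) to produce a term like $u^{-1}$ times a $T$-monomial with coefficient a unit; I would isolate exactly this term and verify it is nonzero in $k[\widetilde{x},\widetilde{y},\z ][T]$ but not in $\Rxyz [T]$. The main obstacle will be precisely this bookkeeping: pinning down which monomial in the double substitution $E^g(\lambda )$ — equivalently in $E^g(x)$ — realizes the minimal power $u^{-1}$ (or some fixed negative power) without being cancelled by another term, and checking its coefficient is a unit; the defining identities for $a,b,c$ in \eqref{gather:c} together with $p\nmid d$ are engineered to make this happen, so the proof amounts to substituting and matching the relevant exponents of $u$, $y$, $\widetilde{x}$ and $T$. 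Finally, I would remark that since $E^g$ does not restrict but $E^g_1$ does, and since $E^g_1$ is (a power of) the quasi-translation underlying a generic elementary automorphism, Corollary~\ref{cor:general criterion} will apply in the next step to conclude non-exponentiality.
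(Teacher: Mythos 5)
Your overall strategy (compute $E^g(y)$ and $E^g(x)$ explicitly modulo powers of $u$, and locate the obstruction in $E^g(x)$) is the paper's strategy, and your treatment of $E^g(y)$ via ($1^\star$) is essentially correct. But your proof of the first assertion contains a genuine gap, and the reasoning you give for it is in fact internally inconsistent with your second assertion. You claim that in $E^g_1(\lambda)-\lambda$ ``every negative power of $u$ is cancelled'' because the increment carries the high power $u^b$, and in particular that the terms arising from $u^{-2}y^{pa+1}$ pick up factors $u^b$ with $b\ge 2$. This is false: writing $E^g(y)=y+\xi$ with $\xi\in -duy^{p(p-1)(d-1)}T+(u^2)$, the increment $\xi$ is divisible by $u$ but \emph{not} by $u^2$, so $u^{-2}\bigl((y+\xi)^{pa+1}-y^{pa+1}\bigr)$ has a term $-du^{-1}y^{c}T$ of $u$-adic valuation $-1$, and likewise $u^{-p-1}\bigl((y+\xi)^{p(p-1)}-y^{p(p-1)}\bigr)=u^{-p-1}\bigl((y^p+\xi^p)^{p-1}-y^{p(p-1)}\bigr)$ has a term $du^{-1}y^{c}T^p$ of valuation $-1$. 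Neither summand of $\lambda$ individually produces something integral. Moreover, your stated mechanism makes no reference to $T$, so if it were correct it would show that $E^g$ itself restricts to $\Rxyz$ --- contradicting the second half of the proposition, which you also assert.

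The correct mechanism, which is the heart of the paper's proof, is that the two non-integral contributions assemble into the single term $du^{-1}y^{c}(T-T^p)$, i.e.\ $E^g(x)\in du^{-1}y^{c}(T-T^p)+\Rxyz[T]$. This one computation proves both halves at once: the coefficient of $T$ (or of $T^p$) in $E^g(x)$ is $du^{-1}y^{c}$ modulo $\Rxyz$, which is not in $\Rxyz$ since $d\in R^*$ and $u\notin R^*$, so $E^g$ does not restrict; and at $T=1$ the factor $T-T^p$ vanishes, so $E^g_1(x)\in \Rxyz$ and $E^g_1$ does restrict. Your sketch of the second assertion points in the right direction (the obstruction sits in $E^g(\lambda)$ and the exponents $a,b,c,d$ are engineered to produce a surviving $u^{-1}$ term), but you never identify the term $du^{-1}y^{c}(T-T^p)$ nor verify the absence of cancellation; and without recognizing that the \emph{same} term is what must vanish at $T=1$, your first assertion has no valid proof. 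To repair the argument, replace the valuation-counting for $E^g_1(\lambda)$ by the explicit computation of $q_1:=(y+\xi)^{p(p-1)}-y^{p(p-1)}\in du^{p}y^{c}T^p+(u^{2p})$ and $q_2:=(y+\xi)^{pa+1}-y^{pa+1}\in -duy^{c}T+(u^2)$, and then read off both conclusions from $\lambda-E^g(\lambda)=-(u^{-p-1}q_1+u^{-2}q_2)$.
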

\begin{proof}
Put $E:=E^g$. 
Since $E(z_i)=z_i$ for each $i$, 
it suffices to check the following:

\quad 
(3$^\star $) $E(y)\in \Rxyz [T]$. \quad 
(4$^\star $) $E(x)\in du^{-1}y^{c}(T-T^p)+\Rxyz [T]$.

\nd 
Since $E(\widetilde{y})=E(y)+E(\widetilde{x})^d$ 
equals $\widetilde{y}=y+\widetilde{x}^d$, 
we can write $E(y)=y+\xi $, 
where 
$\xi :=\widetilde{x}^d-E(\widetilde{x})^d
=\widetilde{x}^d-(\widetilde{x}+u^b(1+ug)T)^d$. 
By ($1^\star $), 
$\widetilde{x}^{d-i}(u^b(1+ug)T)^i$ belongs to 
$uy^{p(p-1)(d-1)}T+(u^2)$ if $i=1$, 
and to $(u^2)$ if $2\le i\le d$. 
Thus, we get 
\begin{equation}\label{eq:xi}
\xi \in -duy^{p(p-1)(d-1)}T+(u^2)\subset (u). 
\end{equation}
Therefore, $E(y)=y+\xi $ belongs to $y+(u)\subset \Rxyz [T]$, 
proving (3$^\star $).

Similarly, 
since $E(\widetilde{x})=E(x)+E(\lambda )$ 
equals $x+\lambda +u^b(1+ug)T$, 
we have 
\begin{equation}\label{eq:non-exp:E(y)}
E(x)=x+\lambda -E(\lambda )+u^b(1+ug)T\in 
\lambda -E(\lambda )+\Rxyz [T]. 
\end{equation}
We can write 
$\lambda-E(\lambda ) =
-(u^{-p-1}q_1+u^{-2}q_2)$, 
where 
$$
q_1:=(y+\xi )^{p(p-1)}-y^{p(p-1)}
=(y^p+\xi ^p)^{p-1}-y^{p(p-1)}
$$
and $q_2:=(y+\xi )^{pa+1}-y^{pa+1}$. 
From (\ref{eq:xi}) and (\ref{gather:c}), 
we see that 
\begin{align*}
q_1&\in (p-1)y^{p(p-2)}\xi ^p+\xi ^{2p}R[y^p,\xi ^p]
\subset -y^{p(p-2)}\cdot (-duy^{p(p-1)(d-1)}T)^p+(u^{2p}) \\
&\quad =du^py^{c}T^p+(u^{2p}),\\ 
q_2&\in y^{pa}\cdot (-duy^{p(p-1)(d-1)}T)+(u^2)
=-duy^{c}T+(u^2). 
\end{align*}
Thus, 
$\lambda -E(\lambda )$ is in $du^{-1}y^{c}(T-T^p)+\Rxyz [T]$. 
Then, (4$^\star $) follows from (\ref{eq:non-exp:E(y)}). 
\end{proof}

Now, 
let $\sigma ^g\in \Aut _R\Rxyz $ 
be the restriction of $E_1^g$. 
Then, 
$\sigma ^g$ is a generic elementary automorphism of $\Rxyz $, 
since 
$E_1^g\in \Aut _{k[\widetilde{y},\z ]}k[\widetilde{y},\z ][\widetilde{x}]$ 
and 
$E_1^g(\widetilde{x})-\widetilde{x}=u^b(1+ug)\in k[\widetilde{y},\z ]$. 
By Remark~\ref{rem:f-rigid unit}, 
$k[\widetilde{y},\z ]$ is $u^b(1+ug)$-stable over $k$ 
if and only if 
$k[\widetilde{y},\z ]$ is $(1+ug)$-stable over $k$. 
Since $E^g$ does not restrict to $\Rxyz $ by Proposition~\ref{prop:non-exp}, 
we get the following theorem by virtue of 
Corollary~\ref{cor:general criterion}.

\begin{thm}\label{thm:non-exp}
Assume that $R$ is a UFD\null. 
If $k[\widetilde{y},\z ]$ is $(1+ug)$-stable over $k$,  
then $\sigma ^g$ is not exponential over $R$. 
\end{thm}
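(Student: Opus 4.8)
The plan is to deduce Theorem~\ref{thm:non-exp} directly from Corollary~\ref{cor:general criterion}, with essentially all of the work already in place. First I would unwind the setup: the automorphism $\sigma ^g$ is, as noted just before the statement, a generic elementary automorphism of $\Rxyz $, realised in the coordinates $y_1=\widetilde{x}$, $\widetilde{y}$, $\z$ on $\kxyz$, with $A=k[\widetilde{y},\z ]$ and $f=E_1^g(\widetilde{x})-\widetilde{x}=u^b(1+ug)$. Since $R$ is assumed to be a UFD, the hypotheses of Corollary~\ref{cor:general criterion} are met once we know that $A$ is $f$-stable over $k=Q(R)$ and that the $\Ga $-action $\begin{psmallmatrix} \widetilde{x} & \widetilde{y} & \z \\ \widetilde{x}+fT & \widetilde{y} & \z \end{psmallmatrix}$ on $A[\widetilde{x}]=\kxyz$ fails to restrict to $\Rxyz $. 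The latter $\Ga $-action is exactly $E^g$ of \eqref{eq:non-exp}, so Proposition~\ref{prop:non-exp} gives the non-restriction. Hence the only remaining point is the $f$-stability of $A$ over $k$.

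For the $f$-stability I would invoke Remark~\ref{rem:f-rigid unit}: since $u^b$ is a unit of $k[\widetilde{y},\z ]$ (indeed $u\in R^*\subset k^*$ because $k$ is a field, and $k[\widetilde{y},\z ]^*=k^*$), the ring $k[\widetilde{y},\z ]$ is $u^b(1+ug)$-stable over $k$ if and only if it is $(1+ug)$-stable over $k$ — which is precisely the hypothesis of the theorem. Thus under that hypothesis $A$ is $f$-stable over $k$, and Corollary~\ref{cor:general criterion} applies to conclude that $\sigma ^g$ is not exponential over $R$.

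Written out, the proof is short: observe that $\sigma ^g$ is a generic elementary automorphism with the data $(A,f)=(k[\widetilde{y},\z ],u^b(1+ug))$; note $u^b\in k^*$ so by Remark~\ref{rem:f-rigid unit} the $(1+ug)$-stability hypothesis yields $f$-stability of $A$ over $k$; note that the $\Ga$-action in Corollary~\ref{cor:general criterion} is literally $E^g$, which by Proposition~\ref{prop:non-exp} does not restrict to $\Rxyz $; then apply Corollary~\ref{cor:general criterion}. I do not anticipate a genuine obstacle here — the substantive work is already discharged in Proposition~\ref{prop:non-exp} (the computations $(3^\star )$ and $(4^\star )$ forcing $E^g(x)\notin \Rxyz [T]$ while $E_1^g$ still restricts) and in Theorem~\ref{thm:a-rigid}/Corollary~\ref{cor:general criterion}. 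The one place to be careful is the bookkeeping identifying the abstract $(A,r,f,k)$ of the criterion with the concrete $(k[\widetilde{y},\z ],\widetilde{x},u^b(1+ug),Q(R))$ here, and checking that $g\in R[u^{(p+1)d}\widetilde{y},\z ]$ indeed lies in $\Rxyz$ (which is $(2^\star )$) so that everything is defined over $R$; but this is routine given the preparation in \eqref{gather:c} and the bullets $(1^\star )$–$(2^\star )$.
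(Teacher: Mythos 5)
Your proof is correct and follows exactly the paper's own argument: identify $\sigma ^g$ as a generic elementary automorphism with $(A,f)=(k[\widetilde{y},\z ],u^b(1+ug))$, use Remark~\ref{rem:f-rigid unit} to convert the $(1+ug)$-stability hypothesis into $f$-stability of $A$ over $k$, and feed Proposition~\ref{prop:non-exp} into Corollary~\ref{cor:general criterion}. One trivial slip: $u\in R\sm (R^*\cup \zs )$ is by construction \emph{not} a unit of $R$, but it is a unit of $k=Q(R)$, which is all that is needed to conclude $u^b\in k[\widetilde{y},\z ]^*=k^*$.
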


For example, 
if we take $g=u^{(p+1)d}\widetilde{y}z_1\cdots z_l$, 
then $k[\widetilde{y},\z ]$ is $(1+ug)$-stable over $k$ 
by Example~\ref{ex:g-rigid} (i). 
Hence, 
$\sigma ^g$ is not exponential over $R$ by Theorem~\ref{thm:non-exp}.

\section{Centralizer of an elementary automorphism}\label{sect:center}
\setcounter{equation}{0}

We say that a generic elementary automorphism 
$\sigma $ of $\Rx $ is {\it fixed point free} 
if $f:=\widetilde{\sigma }(y_1)-y_1$ lies in $k^*$ 
in the notation of Example~\ref{ex:gea}. 
This definition does not depend on the choice of $y_1,\ldots ,y_n$ 
because of Lemma~\ref{lem:fixed points}.

If $n\ge 2$ and $\sigma $ is fixed point free, 
then $k[y_2,\ldots ,y_n]$ 
is not $f$-stable over $k$ 
by Example~\ref{ex:a-rigid1}. 
Hence, we cannot use Corollary~\ref{cor:general criterion}. 
The goal of this section is to generalize 
Corollary~\ref{cor:general criterion} 
to the case where $\sigma $ is fixed point free 
when $n=2$ (Theorem~\ref{thm:fixed point free}). 
For this purpose, 
we first study the centralizer of an elementary automorphism. 
Throughout this section, let $k$ be a field. 
For $\tau \in \Aut _k\kx $, 
we define $C(\tau ):=\{ \phi \in \Aut _k\kx \mid 
\phi \tau =\tau \phi \} $, 
the centralizer of $\tau $ in $\Aut _k\kx $.

\subsection{Centralizer}\label{subsect:centerlizer}

Assume that $n=2$. 
Pick any $t\in k^*$ and 
set $\ep  :=(x_1+t,x_2)\in \Aut _k\kx $. 
Then, 
for $(f_1,f_2)\in \Aut _k\kx $, 
we have 
\begin{equation}\label{eq:C(ep)}
(f_1,f_2)\in C(\ep )\iff \ep (f_1)=f_1+t\text{ and }\ep (f_2)=f_2. 
\end{equation}
From this, 
we see that $C(\ep )$ contains the subgroup $H(t)$ generated by 
\begin{equation}\label{eq:C(tau)generators}
(x_1+g(x_2),x_2)
\quad\text{and}\quad 
(x_1,x_2+g(x_1^p-t^{p-1}x_1))
\quad\text{for all}\quad g(x)\in xk[x]. 
\end{equation}
We also have 
$H_0:=\{ (x_1+u_1,ax_2+u_2)\mid a\in k^*,\ u_1,u_2\in k\} 
\subset C(\ep )$.

The following theorem is the 
main result of Section~\ref{subsect:centerlizer}.

\begin{thm}\label{thm:C(tau)}
In the notation above, 
we have $C(\ep )=H(t)H_0$. 
\end{thm}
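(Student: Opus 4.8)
\emph{Strategy.} The inclusion $H(t)H_0\subseteq C(\ep)$ is a direct verification, and the reverse inclusion will follow from a Jung--van der Kulk degree induction. Set $w:=x_1^p-t^{p-1}x_1$; then $\ep(w)=w$, so by (\ref{eq:C(ep)}) each generator in (\ref{eq:C(tau)generators}) commutes with $\ep$ (for instance $\ep(x_1+g(x_2))=(x_1+g(x_2))+t$ while $\ep$ fixes $x_2$, $g(x_2)$ and $g(w)$), and every element of $H_0$ plainly commutes with $\ep$; hence $H(t)H_0\subseteq C(\ep)$. A short substitution shows that conjugating a generator of $H(t)$ by an element $(x_1+u_1,ax_2+u_2)$ of $H_0$ again lies in $H_0H(t)$, so $H(t)H_0=H_0H(t)$ is a subgroup of $C(\ep)$; this is needed to close the induction.

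\emph{Reduction of the problem.} Let $\phi=(f_1,f_2)\in C(\ep)$. By (\ref{eq:C(ep)}), $\ep(f_2)=f_2$ and $\ep(f_1-x_1)=f_1-x_1$, so Lemma~\ref{lem:invariant ring} (with base ring $k[x_2]$ and shift $t$) gives $k[x_1,x_2]^{\ep}=k[w,x_2]$, whence $h_1:=f_1-x_1\in k[w,x_2]$ and $f_2\in k[w,x_2]$. Consequently $\phi(w)=f_1^p-t^{p-1}f_1=w+(h_1^p-t^{p-1}h_1)$ again lies in $k[w,x_2]$, and (applying the same to $\phi^{-1}\in C(\ep)$) $\phi$ restricts to an automorphism of the polynomial ring $k[w,x_2]$. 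Moreover, since $f_1=x_1+h_1$ with $h_1\in k[w,x_2]$, the leading forms $\overline{f_1},\overline{f_2}$ in $x_1,x_2$ lie in $k[x_1^p,x_2]$, and $\overline{\phi(w)}=\overline{f_1}^p$ (as $\deg f_1\ge 1$).

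\emph{The induction.} We prove $\phi\in H(t)H_0$ by induction on $d(\phi):=\deg f_1+\deg f_2$ (total degrees in $x_1,x_2$; note $d(\phi)\ge 2$ always). If $d(\phi)=2$, then $\phi$ is affine, and comparing coefficients in $\phi\ep=\ep\phi$ forces $f_1=x_1+bx_2+c$, $f_2=ax_2+c'$ with $a\in k^*$, so $\phi=(x_1+bx_2,x_2)(x_1+c,ax_2+c')\in H(t)H_0$. If $d(\phi)>2$, it suffices to find a generator $\rho$ of $H(t)$ with $d(\phi\rho)<d(\phi)$: then $\phi\rho\in C(\ep)$ lies in $H(t)H_0$ by induction, and $\phi=(\phi\rho)\rho^{-1}\in H(t)H_0\cdot H(t)=H(t)H_0$. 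The choice of $\rho$ is dictated by the Jung--van der Kulk reduction of $\phi$ (Theorem~\ref{thm:JvdK}), with bars denoting leading forms and $c\in k^*$ the relevant scalar. If $\deg f_1>\deg f_2$, then $\deg f_2\mid\deg f_1$ and $\overline{f_1}=c\,\overline{f_2}^{\,m}$ with $m:=\deg f_1/\deg f_2\ge 2$; take $\rho=(x_1-cx_2^{m},x_2)$. If $\deg f_1=\deg f_2$, then (by Jung--van der Kulk one component leading form is a scalar times a power of the other, hence in the equal-degree case $\overline{f_1}$ and $\overline{f_2}$ are proportional) say $\overline{f_2}=c\,\overline{f_1}$; take $\rho=(x_1-c^{-1}x_2,x_2)$. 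The delicate case is $\deg f_1<\deg f_2$: then $\deg f_1\mid\deg f_2$ and $\overline{f_2}=c\,\overline{f_1}^{\,m}$ with $m\ge 2$, and one must modify $f_2$ — among the generators of $H(t)$ only those of the form $(x_1,x_2+g(w))$ do so, replacing $f_2$ by $f_2-g(\phi(w))$ — and since $\overline{\phi(w)}=\overline{f_1}^p$, the choice $\rho=(x_1,x_2-cw^{m/p})$ lowers $d$ \emph{provided $p\mid m$}. In each case $\phi\rho\in C(\ep)$, and in the first two $d$ strictly drops, closing the induction.

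\emph{The main obstacle.} The crux is the divisibility $p\mid m$ in the case $\deg f_1<\deg f_2$; this is exactly where the constraint $h_1,f_2\in k[w,x_2]$ is used. Since $f_2=\phi(x_2)$ is a coordinate of $k[x_1,x_2]$ of degree $\ge 2$, its leading form is a scalar times a power of a linear form, $\overline{f_2}=c'\ell^{\deg f_2}$; combined with $\overline{f_2}=c\,\overline{f_1}^{\,m}$ this forces $\overline{f_1}=\gamma\ell^{\deg f_1}$ for some $\gamma\in k^*$. When $\deg f_1=1$ (i.e.\ $\deg h_1\le 1$), the form $\overline{f_1}=x_1+(\text{linear in }x_2)$ genuinely involves $x_1$, and a power of such a linear form lying in $k[x_1^p,x_2]$ has, by Lucas' theorem, $p$-divisible exponent $m$. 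When $\deg f_1\ge 2$ one argues analogously, but must additionally exploit that $\phi(w)=f_1^p-t^{p-1}f_1$ is a coordinate of $k[w,x_2]$ — so that its leading form in the variables $w,x_2$ is again a power of a linear form — in order to pin down the shape of $\overline{h_1}$, and hence of $\overline{f_2}$; tracking the $p$-divisibilities through $\deg f_2=m\deg f_1$ then yields $p\mid m$. I expect reconciling the leading-form structure of $\phi$ in the two coordinate systems $(x_1,x_2)$ and $(w,x_2)$ against the Lucas-type constraint to be the most delicate part of the write-up.
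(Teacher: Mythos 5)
Your reduction of the problem is sound, and the cases $\deg f_1>\deg f_2$ and $\deg f_1=\deg f_2$ of your induction do work: there the required elementary reduction $f_1\mapsto f_1-cf_2^m$ is implemented by a generator $(x_1+g(x_2),x_2)$ of $H(t)$ with no constraint on $m$. But the case $\deg f_1<\deg f_2$ contains a genuine gap, and it is exactly the crux you flag. There the only generators of $H(t)$ that modify $f_2$ do so by adding $g(\phi(w))=g(f_1^p-t^{p-1}f_1)$, whose leading form is a polynomial in $\overline{f_1}^{\,p}$, so your induction closes only if $p\mid m$ where $\overline{f_2}=c\,\overline{f_1}^{\,m}$. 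Your Lucas-theorem argument establishes this only when $\overline{f_1}$ is a power of a linear form genuinely involving $x_1$ and $\deg f_1=1$. But the sub-case $\overline{f_1}\in kx_2^{\deg f_1}$ with $\deg f_1\ge 2$ actually occurs — e.g.\ $\phi=(x_1+x_2^2,\,x_2+(x_1+x_2^2)^p-t^{p-1}(x_1+x_2^2))\in H(t)$ has $\overline{f_1}=x_2^2$ — and for such $\phi$ the constraint $\overline{f_2}\in k[x_1^p,x_2]$ is vacuous (every power of $x_2$ lies in $k[x_1^p,x_2]$), so it yields no information about $m$. Likewise, when $\overline{f_1}=\gamma(ax_1+bx_2)^{\deg f_1}$ with $a\ne 0$ and $\deg f_1\ge 2$, Lucas forces $p\mid\deg f_1$, and then $p\mid\deg f_2=m\deg f_1$ is automatic, again giving nothing about $m$ itself. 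The sentence ``tracking the $p$-divisibilities \dots then yields $p\mid m$'' is therefore an unproved claim, not a proof; without it the induction does not terminate in this case.

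For comparison, the paper takes a different route that sidesteps this divisibility entirely: it uses the amalgamated free product $\Aut_kk[x,y]=G*_{G\cap J}J$ to write $\phi$ in an alternating normal form $\phi_1\cdots\phi_l$, and the key technical input is Proposition~\ref{prop:V} (via the degree inequalities of Lemma~\ref{lem:Delta induction} for $\Delta_i(\phi)=\deg(\ep(f_i)-f_i)$), which shows that the \emph{first factor} $\phi_1$ already satisfies $\delta(\phi_1(x_i))\in k$; combined with (\ref{eq:delta(f)=s}) this forces the triangular factors into the shape $(x,y+q_1(x^p-t^{p-1}x))\in H(t)$ directly, and the induction is on the length $l$ of the normal form rather than on total degree. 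If you want to keep your degree induction, you would need to supply an independent proof that $p\mid m$ in the case $\deg f_1<\deg f_2$ — for instance by analyzing $\phi$ as an automorphism of $k[w,x_2]$ as you suggest — but as written that step is missing.
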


For example, 
due to (\ref{eq:C(ep)}), 
any $\phi \in \Aff _2(k)\cap C(\ep )$ has the form 
\begin{equation}\label{eq:Aff cap C(ep)}
\phi =(x_1+sx_2+u_1,ax_2+u_2)=(x_1+sx_2,x_2)(x_1+u_1,ax_2+u_2)
\in H(t)H_0,
\end{equation}
where $s,u_1,u_2\in k$ and $a\in k^*$.

To prove Theorem~\ref{thm:C(tau)}, 
we need some preliminary results.

\begin{lem}\label{lem:triangular}
Let $\tau ,(f_1,\ldots ,f_n)\in \Aut _k\kx $ 
be such that $\tau (A_i)\subset A_i$ for $i=1,\ldots ,n-1$, 
where $A_i:=k[f_1,\ldots ,f_i]$. 
Then, $\tau (f_i)\in k^*f_i+A_{i-1}$ 
holds for $i=1,\ldots ,n$. 
If moreover $\tau $ has order $p$, 
then 
$\tau (f_i)\in f_i+A_{i-1}$ 
holds for $i=1,\ldots ,n$. 
\end{lem}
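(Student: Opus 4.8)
\textbf{Proof proposal for Lemma~\ref{lem:triangular}.}
The plan is to argue by induction on $i$. The hypothesis $\tau(A_i)\subset A_i$ for $i=1,\dots,n-1$ means that $\tau$ restricts to an automorphism of each $A_i$ (each $A_i$ is a polynomial ring in $i$ variables since $(f_1,\dots,f_n)\in\Aut_k\kx$), and in particular $\tau(A_i)=A_i$. First I would establish the base case $i=1$: the restriction of $\tau$ to $A_1=k[f_1]$ is a $k$-algebra automorphism of a polynomial ring in one variable, hence $\tau(f_1)\in k^*f_1+k$, which is the claim with $A_0:=k$. For the inductive step, suppose $\tau(f_j)\in k^*f_j+A_{j-1}$ for all $j<i$. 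I would then pass to the quotient: consider the ring $A_i/(A_{i-1}\cap\text{the ideal generated by }A_{i-1}\text{ with no constant term})$ — more cleanly, work in $A_i$ viewed as $A_{i-1}[f_i]=A_{i-1}^{[1]}$, and reduce modulo the ideal $\mathfrak{m}A_i$ where $\mathfrak{m}$ is the maximal ideal of $A_{i-1}$ corresponding to a suitable point, or better, quotient $A_i$ by the prime ideal $A_{i-1}^+A_i$ (the extension of the augmentation-type ideal). The key point is that $\tau$ sends $A_{i-1}$ into itself, so it descends to an automorphism $\bar\tau$ of $A_i/A_{i-1}^+A_i\cong k[\bar f_i]$, a polynomial ring in one variable; hence $\bar\tau(\bar f_i)\in k^*\bar f_i+k$, which lifts to $\tau(f_i)\in k^*f_i+A_{i-1}$.

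The subtlety I anticipate is making the "descent to a one-variable polynomial ring" rigorous: one needs that $A_{i-1}^+A_i$ (or whichever ideal is chosen) is a $\tau$-stable prime ideal with $A_i$ modulo it isomorphic to $k[\bar f_i]$ with $\bar f_i$ transcendental over $k$. Since $A_i=A_{i-1}[f_i]$ is a polynomial extension and $\tau$ preserves $A_{i-1}$, the ideal $A_{i-1}^+A_i$ is indeed $\tau$-stable (as $\tau(A_{i-1}^+)=A_{i-1}^+$, using that $\tau$ restricts to a $k$-algebra automorphism of $A_{i-1}$ and hence preserves its augmentation ideal), and the quotient is $k[\bar f_i]$. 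Then the restriction of $\bar\tau$ to this quotient is a $k$-algebra automorphism, so $\bar\tau(\bar f_i)=a\bar f_i+b$ with $a\in k^*$, $b\in k$; pulling back gives $\tau(f_i)-af_i\in A_{i-1}$, i.e. $\tau(f_i)\in k^*f_i+A_{i-1}$, completing the induction.

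For the final assertion, assume $\tau$ has order $p$ (so $p>0$). Write $\tau(f_i)=a_if_i+h_i$ with $a_i\in k^*$ and $h_i\in A_{i-1}$, using the part already proved. I would iterate $\tau$: modulo $A_{i-1}^+A_i$ again, $\bar\tau^{\,l}(\bar f_i)=a_i^l\bar f_i+(\text{constant})$, and $\tau^p=\id$ forces $a_i^p=1$ in $k^*$. But in characteristic $p$ the only $p$-th root of unity is $1$, so $a_i=1$, giving $\tau(f_i)\in f_i+A_{i-1}$ for every $i$.

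\textbf{Main obstacle.} The only real care needed is the bookkeeping in the quotient construction — verifying that $A_{i-1}^+A_i$ is prime, $\tau$-stable, and yields a one-variable polynomial ring over $k$ with image $\bar f_i$ transcendental. Once that is in place, everything reduces to the trivial fact that $k$-algebra automorphisms of $k[z]$ are affine in $z$, plus the characteristic-$p$ root-of-unity observation for the order-$p$ refinement; no genuinely hard computation is involved.
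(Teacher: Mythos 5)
Your reduction to a one-variable quotient does not work, for two separate reasons. First, the ideal $A_{i-1}^+A_i$ is not $\tau$-stable: a $k$-algebra automorphism of $A_{i-1}$ need not preserve the augmentation ideal $A_{i-1}^+$. Indeed, the automorphisms to which this lemma is applied in the paper are translations; already for $\tau=(x_1+1,x_2,\ldots,x_n)$ and $f_j=x_j$ one has $\tau(A_1^+)=(f_1+1)k[f_1]\ne (f_1)k[f_1]=A_1^+$, so $\bar\tau$ on $A_i/A_{i-1}^+A_i$ is simply not defined. Second, even granting the descent, the congruence $\tau(f_i)\equiv a f_i+b \pmod{A_{i-1}^+A_i}$ is much weaker than the desired conclusion: writing $\tau(f_i)=\sum_j c_jf_i^j$ with $c_j\in A_{i-1}$, it only says that the constant terms of the $c_j$ with $j\ge 2$ vanish, not that the $c_j$ themselves vanish (the kernel $A_{i-1}^+A_i$ contains elements such as $f_1f_i^2$, which are not in $k^*f_i+A_{i-1}$). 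So the ``lift'' at the end of your inductive step is unjustified.

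There is also a gap at the very start: you assert that $\tau(A_i)\subset A_i$ forces $\tau(A_i)=A_i$ ``in particular,'' but this is precisely a point that needs an argument --- $\tau$ is not assumed to have finite order in the first assertion, and an inclusion of image need not be an equality a priori. The paper proves it by observing that $\tau(A_i)$ and $A_i$ are both algebraically closed in $\kx$ and have the same transcendence degree over $k$, so $A_i$ is algebraic over $\tau(A_i)$ and hence contained in it. Once $\tau(A_{i-1})=A_{i-1}$ and $\tau(A_i)=A_i$ are known, the clean way to finish --- and the paper's way --- is the identity $A_{i-1}[\tau(f_i)]=\tau(A_{i-1}[f_i])=\tau(A_i)=A_i=A_{i-1}[f_i]$ together with the standard fact that a second polynomial generator of $A^{[1]}$ over a domain $A$ has the form $uf+h$ with $u\in A^*$ and $h\in A$, applied with $A=A_{i-1}$ and $A_{i-1}^*=k^*$; no quotient is needed. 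Your treatment of the order-$p$ refinement (forcing $a_i^p=1$, hence $a_i=1$ in characteristic $p$) is fine once the first part is repaired, and matches the paper's appeal to Remark~\ref{rem:strict triangular}~(i).
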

\begin{proof}
We claim that $\tau (A_i)=A_i$ for $i=1,\ldots ,n$, 
since $\tau (A_i)$ and $A_i$ are algebraically closed in $\kx $ 
and have the same transcendence degree over $k$. 
Hence, we have 
$A_{i-1}[\tau (f_i)]
=\tau (A_{i-1})[\tau (f_i)]=\tau (A_i)
=A_i=A_{i-1}[f_i]$. 
This implies that 
$\tau (f_i)
\in A_{i-1}^*f_i+A_{i-1}=k^*f_i+A_{i-1}$. 
The last assertion is due to Remark~\ref{rem:strict triangular}~(i). 
\end{proof}

For simplicity, we put $x:=x_1$ and $y:=x_2$. 
Set $G:=\Aff _2(k)$ and $J:=\J _2(k)$. 
Then, we note that each $\alpha \in G\sm J$ 
and $\beta \in J\sm G$ can be written as 
\begin{equation}\label{eq:alpha}
\begin{aligned}
\alpha &=(ax+by+u,cx+dy+v),
\text{ where }
a,b,c,d,u,v\in k,\ 
ad-bc\ne 0,\ b\ne 0 \\  
\beta &=(ax+c,by+q(x)), 
\text{ where }
a,b\in k^*,\ c\in k,\ 
q(x)\in k[x],\ 
\deg q(x)\ge 2. 
\end{aligned}
\end{equation}

Now, pick any $\tau \in \Aut _kk[x,y]$ 
and set 
$\delta :=\tau -\id :k[x,y]\to k[x,y]$. 
Note that $\delta $ is a $k$-linear map with $\ker \delta =k[x,y]^\tau $. 
For $\phi =(f_1,f_2)\in \Aut _kk[x,y]$ and $i=1,2$, 
we define $\Delta _i(\phi ):=\deg \delta (f_i)=\deg (\tau (f_i)-f_i)$. 
Here, 
``$\deg $" denotes the total degree, 
and $\deg 0:=-\infty $. 
With this notation, 
the following lemma holds.

\begin{lem}\label{lem:Delta induction}
Let $\phi =(f_1,f_2)\in \Aut _kk[x,y]$.

\nd{\rm (i)} 
If $\Delta _1(\phi )<\Delta _2(\phi )$, 
then 
$\Delta _1(\phi \alpha )=\Delta _2(\phi )\ge \Delta _2(\phi \alpha )$ 
holds for all $\alpha \in G\sm J$.

\nd{\rm (ii)} 
Assume that $\tau $ has order $p$. 
If $\Delta _1(\phi )\ge 1$ 
and $\Delta _1(\phi )\ge \Delta _2(\phi )$, 
then 
$\Delta _1(\phi \beta )=\Delta _1(\phi )<\Delta _2(\phi \beta )$ 
holds for all $\beta \in J\sm G$.

\end{lem}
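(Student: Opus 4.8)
The plan is to run direct computations on the explicit normal forms of the elements of $G\sm J$ and $J\sm G$ recorded in~(\ref{eq:alpha}), using the composition rule $\phi\psi=(\phi(g_1),\phi(g_2))$ for $\psi=(g_1,g_2)$ together with the $k$-linearity of $\delta=\tau-\id$ and $\delta(k)=0$. For~(i), write $\alpha=(ax+by+u,\,cx+dy+v)$ with $ad-bc\ne0$ and $b\ne0$; then $\phi\alpha=(af_1+bf_2+u,\,cf_1+df_2+v)$, so $\delta(af_1+bf_2+u)=a\delta(f_1)+b\delta(f_2)$. Since $b\ne0$ and $\deg\delta(f_1)=\Delta_1(\phi)<\Delta_2(\phi)=\deg\delta(f_2)$, the summand $b\delta(f_2)$ strictly dominates, so $\Delta_1(\phi\alpha)=\Delta_2(\phi)$; and $\Delta_2(\phi\alpha)=\deg(c\delta(f_1)+d\delta(f_2))\le\max(\Delta_1(\phi),\Delta_2(\phi))=\Delta_2(\phi)$. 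This proves~(i).

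For~(ii), write $\beta=(ax+c,\,by+q(x))$ with $a,b\in k^{*}$ and $m:=\deg q\ge2$, so $\phi\beta=(af_1+c,\,bf_2+q(f_1))$. From $\delta(af_1+c)=a\delta(f_1)$ and $a\ne0$ we get $\Delta_1(\phi\beta)=\Delta_1(\phi)$ at once. For the second coordinate, $\delta(bf_2+q(f_1))=b\delta(f_2)+\bigl(q(\tau(f_1))-q(f_1)\bigr)$, and since $\deg(b\delta(f_2))=\Delta_2(\phi)\le\Delta_1(\phi)=:e$, everything reduces to proving
$$\deg\bigl(q(\tau(f_1))-q(f_1)\bigr)>e ,$$
which then also yields $\Delta_2(\phi\beta)=\deg(q(\tau(f_1))-q(f_1))>e=\Delta_1(\phi\beta)$. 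Put $F:=f_1$ and $u:=\delta(F)=\tau(F)-F$, so $\deg u=e\ge1$; in particular $F$ is nonconstant and $\tau(F)\ne F$. Since $\deg q(G)=m\deg G$ for every nonconstant $G\in k[x,y]$, and $\deg u\le\max(\deg\tau(F),\deg F)$, the case $\deg\tau(F)\ne\deg F$ gives $\deg(q(\tau(F))-q(F))=m\max(\deg\tau(F),\deg F)\ge me>e$ immediately. So I may assume $\deg\tau(F)=\deg F=:N$, hence $e\le N$. Writing $q(W)-q(Z)=(W-Z)\,\tilde q(W,Z)$ with $\tilde q\in k[W,Z]$ of total degree $m-1$, one has $q(\tau(F))-q(F)=u\cdot\tilde q(\tau(F),F)$, so it remains to show $\tilde q(\tau(F),F)\notin k$.

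This is where the hypothesis $\tau^{p}=\id$ is essential. Suppose $\tilde q(\tau(F),F)=\lambda\in k$ and set $Q(x):=q(x)-\lambda x$, so $\deg Q=m\ge2$ and $Q(\tau(F))=Q(F)$; applying powers of $\tau$ gives $Q(\tau^{j}(F))=Q(F)$ for all $j$, and $c:=Q(F)$ is $\tau$-invariant. As $\tau(F)\ne F$ and $p$ is prime, the $\langle\tau\rangle$-orbit of $F$ consists of exactly $p$ elements, so $F,\tau(F),\dots,\tau^{p-1}(F)$ are $p$ distinct roots of $Q(W)-c\in k[x,y][W]$; hence $m\ge p$, and $\prod_{j=0}^{p-1}(W-\tau^{j}(F))$ divides $Q(W)-c$ over the fixed field $k(x,y)^{\tau}$, which has index $p$ in $k(x,y)$ by Artin's theorem. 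The remaining task is to contradict this, and I expect to do so by a leading-form analysis: each $\tau^{j}(F)$ is nonconstant of degree $N$, so comparing degree-$mN$ parts in $Q(\tau^{j}(F))=c$ forces the degree-$N$ form $\ell_{j}$ of $\tau^{j}(F)$ to equal $\zeta_{j}\ell_{0}$ for some $m$-th root of unity $\zeta_{j}\in k$; tracking how these forms behave under iterated application of $\tau$, and using $\tau^{p}=\id$ together with the fact that a root of unity in characteristic $p$ has order prime to $p$, forces $\zeta_{1}=1$, i.e.\ $\deg u<N$, after which one descends by induction on $\deg f_1$, the base case being subsumed by the already-settled case $\deg\tau(F)\ne\deg F$. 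I expect this last step---excluding the characteristic-$p$ cancellation in $q(\tau(f_1))-q(f_1)$---to be the main obstacle; everything preceding it is routine bookkeeping.
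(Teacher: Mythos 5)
Your part (i) and the first half of part (ii) coincide with the paper's proof, and your reduction of the second half of (ii) to the single claim that $\tilde q(\tau(f_1),f_1)\notin k$ is also exactly the paper's reduction: writing $q(x+y)-q(x)=p(x,y)y$ and $h:=\delta(f_1)$, the paper's quantity $p(f_1,h)$ equals your $\tilde q(\tau(f_1),f_1)$. But that claim is precisely the content of the lemma, and you do not prove it --- you explicitly defer it (``I expect to do so by a leading-form analysis\dots I expect this last step to be the main obstacle''). So the proposal has a genuine gap at the one step where the hypothesis $\tau^p=\id$ must be used. Moreover, the sketch you offer for closing it has concrete problems: (1) the relation $\ell_{j+1}=\zeta_1\ell_j$ that you need in order to conclude $\zeta_1^p=1$ is unjustified, because the degree-$N$ leading form of $\tau(G)$ is not a function of the leading form of $G$ (an automorphism can raise or rearrange top-degree parts arbitrarily; all you know is that each $\tau^j(F)$ happens to have degree $N$); and (2) even granting $\zeta_1=1$, the conclusion $\deg u<N$ is not by itself a contradiction, and the proposed ``induction on $\deg f_1$'' is not set up --- nothing of smaller degree is produced to play the role of $f_1$, so the argument does not terminate.

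For comparison, the paper's argument for this step is short and avoids leading forms entirely. Suppose $p(f_1,h)=\lambda\in k$. Since $\deg q\ge 2$, the polynomial $p(x,y)$ has $\deg_y p\ge 1$, and since $f_1$ is transcendental over $k$, the specialization $p(f_1,Y)-\lambda\in k[f_1][Y]$ is nonzero of positive degree in $Y$ and vanishes at $Y=h$; hence $h$ is algebraic over $k[f_1]$. Because $k[f_1]$ is algebraically closed in $k[x,y]=k[f_1,f_2]$, this forces $h\in k[f_1]$, so $\tau(f_1)=f_1+h\in k[f_1]$ and $\tau$ preserves $k[f_1]$. Lemma~\ref{lem:triangular} together with $\tau^p=\id$ then gives $\tau(f_1)\in f_1+k$, i.e.\ $h\in k$, contradicting $\deg h=\Delta_1(\phi)\ge 1$. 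If you want to repair your write-up, replacing your leading-form sketch with this algebraic-closedness argument (or an equivalent one) is what is needed; as it stands the proof is incomplete.
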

\begin{proof}
(i) 
Writing $\alpha $ as in (\ref{eq:alpha}), 
we have 
$\Delta _1(\phi \alpha )=\deg (a\delta (f_1)+b\delta (f_2))$ and 
$\Delta _2(\phi \alpha )=\deg (c\delta (f_1)+d\delta (f_2))$. 
Since $b\ne 0$, 
and $\deg \delta (f_1)=\Delta _1(\phi )<\Delta _2(\phi )=\deg \delta (f_2)$ 
by assumption, 
the assertion follows immediately.

(ii) 
Writing $\beta $ as in (\ref{eq:alpha}), 
we have $\Delta _1(\phi \beta )=\deg \delta (af_1+c)
=\deg a\delta (f_1)=\Delta _1(\phi )$.

We show that $\Delta _1(\phi )<\Delta _2(\phi \beta )
=\deg (b\delta (f_2)+\delta (q(f_1)))$. 
Set $h:=\delta (f_1)=\tau (f_1)-f_1$. 
Since $\deg \delta (f_2)=\Delta _2(\phi )\le \Delta _1(\phi )=\deg h$ 
by assumption, 
it suffices to verify that 
$\deg h<\deg \delta (q(f_1))$. 
Write $q(x+y)-q(x)=p(x,y)y$, 
where $p(x,y)\in k[x,y]$. 
Note that $\deg _yp(x,y)\ge 1$, since $\deg q(x)\ge 2$. 
In the following, 
we prove that $p(f_1,h)\not\in k$. 
Then, 
it follows that 
the total degree of 
$\delta (q(f_1))=\tau (q(f_1))-q(f_1)=q(f_1+h)-q(f_1)=p(f_1,h)h$ 
is greater than $\deg h$.

Suppose that $p(f_1,h)\in k$. 
Then, $h$ is algebraic over $k[f_1]$, 
since $\deg _yp(x,y)\ge 1$. 
Since 
$k[f_1]$ is algebraically closed in $k[x,y]=k[f_1,f_2]$, 
it follows that $h\in k[f_1]$. 
Thus, $\tau (f_1)=f_1+h$ lies in $k[f_1]$. 
Since $\tau $ has order $p$ by assumption, 
this implies 
by Lemma~\ref{lem:triangular} that $\tau (f_1)\in f_1+k$, i.e., 
$h\in k$. 
This contradicts the assumption that 
$\deg h=\Delta _1(\phi )\ge 1$. 
\end{proof}

Next, we define  
\begin{equation}\label{eq:def:V}
\begin{aligned}
V&:=\{ (f_1,f_2)\in \Aut _kk[x,y]\mid \delta (f_i)\in k\text{ for }i=1,2\} \\
&=\{ \phi \in \Aut _kk[x,y]\mid 
\Delta _i(\phi )\le 0\text{ for }i=1,2\} .
\end{aligned}
\end{equation}

\begin{prop}\label{prop:V}
Let $\tau =(x+t_1,y+t_2)\in \Aut _kk[x,y]$, 
where $(t_1,t_2)\in k^2\sm \zs $. 
Assume that $\phi \in V$ is written as 
$\phi =\phi _1\phi _2\cdots \phi _l$, 
where $l\ge 1$, $\phi _1,\phi _3,\ldots \in J\sm G$ 
and $\phi _2,\phi _4,\ldots \in G\sm J$. 
Then, 
we have $\phi _1\in V$. 
\end{prop}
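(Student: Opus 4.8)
The plan is to argue by contradiction. Assume $\phi _1\notin V$; I will show this forces $\phi =\phi _1\phi _2\cdots \phi _l\notin V$, against the hypothesis. First I would extract the two facts about $\phi _1$ that matter. Since $\phi _1\in J\sm G$, write $\phi _1=(ax+c,by+q(x))$ as in (\ref{eq:alpha}), with $a,b\in k^*$, $c\in k$ and $\deg q\ge 2$. Because $\tau =(x+t_1,y+t_2)$, the polynomial $\delta (ax+c)=\tau (ax+c)-(ax+c)=at_1$ lies in $k$, so $\Delta _1(\phi _1)\le 0$. Consequently $\phi _1\notin V$ (that is, $\Delta _1(\phi _1)\ge 1$ or $\Delta _2(\phi _1)\ge 1$, cf.\ (\ref{eq:def:V})) forces $\Delta _2(\phi _1)\ge 1$, hence $\Delta _1(\phi _1)<\Delta _2(\phi _1)$. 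If $l=1$ this already says $\phi =\phi _1\notin V$, a contradiction, so from now on I may assume $l\ge 2$.

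The core of the argument is an induction along the partial products $\psi _j:=\phi _1\phi _2\cdots \phi _j$ ($1\le j\le l$, so $\psi _l=\phi $), showing that $(\Delta _1(\psi _j),\Delta _2(\psi _j))$ oscillates between two configurations: for even $j$ one has $\Delta _1(\psi _j)\ge 1$ and $\Delta _1(\psi _j)\ge \Delta _2(\psi _j)$ (``state B''), while for odd $j\ge 3$ one has $\Delta _2(\psi _j)>\Delta _1(\psi _j)\ge 1$ (``state A''). For the base case $j=2$: since $\phi _2\in G\sm J$ and $\Delta _1(\psi _1)<\Delta _2(\psi _1)$, Lemma~\ref{lem:Delta induction}~(i) gives $\Delta _1(\psi _2)=\Delta _2(\psi _1)\ge 1$ and $\Delta _2(\psi _2)\le \Delta _1(\psi _2)$, which is state B\null. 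For the inductive step I first record that $\tau $ has order $p$, since $\tau ^p=\id $ and $\tau \ne \id $ (as $(t_1,t_2)\in k^2\sm \zs $). If $\psi _j$ is in state B and $j<l$, then $\phi _{j+1}\in J\sm G$, and the state-B inequalities are exactly the hypothesis of Lemma~\ref{lem:Delta induction}~(ii), giving $\Delta _1(\psi _{j+1})=\Delta _1(\psi _j)\ge 1$ and $\Delta _2(\psi _{j+1})>\Delta _1(\psi _{j+1})$, i.e.\ state A for $j+1$. If $\psi _j$ is in state A and $j<l$, then $\phi _{j+1}\in G\sm J$, and the state-A inequality $\Delta _1(\psi _j)<\Delta _2(\psi _j)$ lets me apply Lemma~\ref{lem:Delta induction}~(i) to get $\Delta _1(\psi _{j+1})=\Delta _2(\psi _j)>\Delta _1(\psi _j)\ge 1$ and $\Delta _2(\psi _{j+1})\le \Delta _1(\psi _{j+1})$, i.e.\ state B for $j+1$. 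Either way $\Delta _1(\psi _{j+1})\ge 1$, so in the end $\Delta _1(\phi )=\Delta _1(\psi _l)\ge 1$, whence $\phi \notin V$ — the desired contradiction. Therefore $\phi _1\in V$.

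The genuinely delicate point, and the one I would be most careful about, is the bookkeeping: choosing states A and B so that the conclusion of each invocation of Lemma~\ref{lem:Delta induction} is precisely the hypothesis of the next, keeping the syllable types $J\sm G$ (odd positions) and $G\sm J$ (even positions) synchronised with the parity of $j$, and not forgetting that part~(ii) of that lemma needs $\tau $ of order $p$ (which is exactly why the hypothesis requires $(t_1,t_2)\in k^2\sm \zs $). The remaining pieces — the one-line computation $\delta (ax+c)=at_1$ and the edge case $l=1$ — are routine. As a consistency check, note that when $t_1=0$ one has $\delta (by+q(x))=bt_2\in k$, so $\phi _1\in V$ automatically and the contradiction above can never be reached, since then $\Delta _2(\phi _1)\le 0$ already.
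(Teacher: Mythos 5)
Your proof is correct and follows essentially the same route as the paper's: establish $\Delta _1(\phi _1)\le 0$ from $\phi _1(x)\in k^*x+k$, deduce $\Delta _1(\phi _1)<\Delta _2(\phi _1)$ from the contradiction hypothesis, and then alternate Lemma~\ref{lem:Delta induction} (i) and (ii) along the partial products to force $\Delta _1(\phi )\ge 1$ or $\Delta _2(\phi )\ge 1$. Your explicit two-state bookkeeping is just a tidier packaging of the paper's ``iterating this'' step, and the $l=1$ edge case you isolate is handled implicitly there.
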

\begin{proof}
Since $\phi _1(x)$ is in $k^*x+k$, 
we have $\delta (\phi _1(x))\in k^*t_1\subset k$, 
i.e., 
$\Delta _1(\phi _1)\le 0$. 
Hence, 
supposing $\phi _1\not\in V$, 
we must have $\Delta _2(\phi _1)\ge 1$, 
and so $\Delta _1(\phi _1)<\Delta _2(\phi _1)$. 
Then, 
since $\phi _2\in G\sm J$, 
we know by Lemma~\ref{lem:Delta induction} (i) that 
$\Delta _1(\phi _1\phi _2)
=\Delta _2(\phi _1)\ge \Delta _2(\phi _1\phi _2)$. 
Hence, we have 
$\Delta _1(\phi _1\phi _2)
=\Delta _2(\phi _1)\ge 1$ and 
$\Delta _1(\phi _1\phi _2)\ge \Delta _2(\phi _1\phi _2)$. 
Then, 
since $\phi _3\in J\sm G$ and $\tau $ has order $p$, 
we know by Lemma~\ref{lem:Delta induction} (ii) that 
$\Delta _1(\phi _1\phi _2\phi _3)=\Delta _1(\phi _1\phi _2)
<\Delta _2(\phi _1\phi _2\phi _3)$. 
Since $\phi _4\in G\sm J$, 
this gives   
$\Delta _1(\phi _1\cdots \phi _4)=\Delta _2(\phi _1\phi _2\phi _3)
\ge \Delta _2(\phi _1\cdots \phi _4)$ 
by Lemma~\ref{lem:Delta induction} (i). 
Iterating this, 
we obtain a sequence 
$1\le \Delta _2(\phi _1)=\Delta _1(\phi _1\phi _2)
<\Delta _2(\phi _1\phi _2\phi _3)
=\Delta _1(\phi _1\phi _2\phi _3\phi _4)
<\cdots $. 
Since $\phi =\phi _1\phi _2\cdots \phi _l$, 
this shows that $\Delta _1(\phi )$ or $\Delta _2(\phi )$ is positive, 
contradicting $\phi \in V$. 
\end{proof}

Finally, we remark the following (I), (II) and (III).

\renewcommand{\theenumi}{\Roman{enumi}}
\renewcommand{\theenumii}{\arabic{enumii}}

\begin{enumerate}[leftmargin=9mm]

\item Write $\alpha \in G\sm J$ as in (\ref{eq:alpha}). 

\begin{enumerate}[leftmargin=3mm]

\item 
If $a\ne 0$, 
then there exist $\gamma _1,\gamma _2\in G\cap J$ 
and $u_1,u_2\in k^*$ 
such that 
$\alpha \gamma _1=(u_1x+y,x)$ 
and 
$\alpha \gamma _2=(x+u_2y,y)$, 
since $b\ne 0$. 

\item 
If $a=0$, 
then there exists $\gamma \in G\cap J$ 
such that $\alpha \gamma =(y,x)$. 

\end{enumerate}

\item 
Let $\beta =(ax+c,by+q(x))\in J$, 
where $a,b\in k^*$, $c\in k$ and $q(x)\in k[x]$. 
Then, for each	 $u,v\in k$, 
we have $\beta \gamma =(x,y+b^{-1}(q(x)-ux-v))$, 
where 
$\gamma :=(a^{-1}(x-c),b^{-1}(y-ua^{-1}(x-c)-v))
\in G\cap J$.

\item 
Let $t\in k^*$ and $s\in k$. 
Then, 
by Lemma~\ref{lem:invariant ring}, 
we have 
\begin{equation}\label{eq:delta(f)=s}
W_{s,t}:=
\{ f(x)\in k[x]\mid f(x+t)-f(x)=s\} 
=k[x^p-t^{p-1}x]+t^{-1}sx. 
\end{equation}
In fact, 
$W_{s,t}=k[x^p-t^{p-1}x]+h$ holds for any $h\in W_{s,t}$. 
\end{enumerate}

\begin{proof}[Proof of Theorem~$\ref{thm:C(tau)}$]
We only show that each $\phi \in C(\ep )$ belongs to $H(t)H_0$, 
since $C(\ep )\supset H(t)H_0$ is clear.  
By (\ref{eq:Aff cap C(ep)}), 
we may assume that $\phi \not\in G$. 
Then, by Theorem~\ref{thm:JvdK} and the remark following it, 
we can write $\phi =\phi _1\phi _2\cdots \phi _l$, 
where 

$\bullet $ 
$\phi _1,\phi _3,\ldots \in J\sm G$, 
$\phi _2,\phi _4,\ldots \in G\sm J$ and $l\ge 1$, or

$\bullet $ 
$\phi _1,\phi _3,\ldots \in G\sm J$, 
$\phi _2,\phi _4,\ldots \in J\sm G$ and $l\ge 2$.

\nd 
We show that $\phi $ belongs to $H(t)H_0$ by induction on $l$. 
Here, the base of the induction is the case $\phi \in G\cap J$. 
To complete the proof, 
it suffices to prove the following $(\dag )$:

$(\dag )$ 
There exist $1\le m\le l$ 
and $\gamma \in G\cap J$ 
such that 
$\phi _1\cdots \phi _m\gamma $ belongs to $H(t)$.

\nd 
Actually, 
$\phi _1\cdots \phi _m\gamma \in H(t)\subset C(\ep )$ 
implies 
$\phi ':=(\phi _1\cdots \phi _m\gamma )^{-1}\phi \in C(\ep )$. 
Since $\phi '
=(\phi _1\cdots \phi _m\gamma )^{-1}\phi _1\phi _2\cdots \phi _l
=(\gamma ^{-1}\phi _{m+1})\phi _{m+2}\cdots \phi _l$, 
we then have $\phi '\in H(t)H_0$ by induction assumption. 
Hence, we get $\phi =(\phi _1\cdots \phi _m\gamma )\phi '\in H(t)H_0$.

We divide the proof of $(\dag )$ into the following three cases 
(a), (b) and (c). 
Here, 
we write $\phi _1$ as in (\ref{eq:alpha}) in each case: 

(a) $\phi _1\in G\sm J$ and $a\ne 0$. \quad 
(b) $\phi _1\in G\sm J$ and $a=0$. \quad 
(c) $\phi _1\in J\sm G$.

\nd 
{\bf Case (a).} 
By (1) of (I), 
there exist $\gamma \in G\cap J$ and $u_2\in k^*$ 
such that $\phi _1\gamma =(x+u_2y,y)\in H(t)$.

\nd 
{\bf Case (b).} 
Since $\phi _1\in G\sm J$, we have $l\ge 2$. 
By (2) of (I), 
there exists $\gamma _1\in G\cap J$ such that 
$\phi _1\gamma _1=(y,x)$. 
Set $\phi _2':=\gamma _1^{-1}\phi _2\in J\sm G$. 
Then, 
we have $\phi _2'(x)\in k[x]$, 
so $(\phi _1\phi _2)(x)=((y,x)\phi _2')(x)\in k[y]$. 
This implies $\phi _1\phi _2\not\in C(\ep )$ 
in view of (\ref{eq:C(ep)}). 
Hence, 
we have $l\ge 3$. 
By (II), 
there exists $\gamma _2\in G\cap J$ 
such that $\phi _2'\gamma _2=(x,y+p(x))$, 
where $p(x)\in xk[x]$. 
Set $\phi _3':=\gamma _2^{-1}\phi _3\in G\sm J$. 
Then, by (1) and (2) of (I), 
there exist $\gamma _3\in G\cap J$ and $\hat{u}\in k$ 
such that $\phi _3'\gamma _3=(\hat{u}x+y,x)$. 
Then, 
we have 
$$
\phi _1\phi _2\phi _3\gamma _3
=\phi _1\gamma _1\phi _2'\gamma _2\phi _3'\gamma _3
=(y,x)(x,y+p(x))(\hat{u}x+y,x)
=(x+p(y)+\hat{u}y,y)\in H(t). 
$$

\nd {\bf Case (c).} 
Let $V$ be as in (\ref{eq:def:V}) with $\delta :=\ep -\id $. 
Then, we have $C(\ep )\subset V$ by (\ref{eq:C(ep)}), 
so $\phi _1\phi _2\cdots \phi _l\in C(\ep )\subset V$. 
Since $\phi _1\in J\sm G$, 
it follows that $\phi _1\in V$ by Proposition~\ref{prop:V}. 
Hence, 
$s:=\delta (\phi _1(y))=\delta (by+q(x))=q(x+t)-q(x)$ 
is in $k$. 
Thus, $q(x)$ belongs to (\ref{eq:delta(f)=s}).  
Then, using (II) with $u:=st^{-1}$ and $v:=q(0)$, 
we obtain $\gamma \in G\cap J$ 
and $q_1(x)\in xk[x]$ 
such that $\phi _1\gamma =(x,y+q_1(x^p-t^{p-1}x))\in H(t)$. 
\end{proof}

The centralizer of another type of elementary automorphism 
of $k[x,y]$ is described as follows.

\begin{thm}\label{thm:C(tau):fixed points}
Let $\ep '=(x+f(y),y)\in \Aut _kk[x,y]$, 
where $f(y)\in k[y]\sm k$. 
Then, $C(\ep ')$ is equal to 
\begin{equation}\label{eq:C(tau):fixed points}
\{ (ax+g,by+c)\mid 
(a,b,c,g)\in (k^*)^2\times k\times k[y],\ 
af(y)=f(by+c)\} . 
\end{equation}
\end{thm}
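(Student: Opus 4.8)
The plan is to prove the two inclusions between $C(\ep')$ and the set~(\ref{eq:C(tau):fixed points}) separately. The inclusion ``$\supseteq$'' is a direct check: if $\phi=(ax+g,by+c)$ with $a,b\in k^*$, $c\in k$, $g\in k[y]$ and $af(y)=f(by+c)$, then $\phi$ is an automorphism of $k[x,y]$ (for instance $\phi=(ax,by+c)\circ(x+g(b^{-1}(y-c)),y)$ is the composite of an invertible affine map with an elementary one), and one computes $\phi\ep'(x)=ax+g+f(by+c)$ and $\ep'\phi(x)=ax+g+af(y)$, which coincide by hypothesis, while $\phi\ep'(y)=by+c=\ep'\phi(y)$ trivially; hence $\phi\in C(\ep')$.

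For the inclusion ``$\subseteq$'', the key idea is to conjugate the canonical $\Ga$-action inducing $\ep'$ by an element of the centralizer and then invoke the $f$-stability of $k[y]$. Let $E$ be the $\Ga$-action on $k[x,y]$ over $k[y]$ defined by $E(x)=x+f(y)T$ and $E(y)=y$; since $f(y)\ne 0$, Example~\ref{ex:Ga-action on R[x]} shows $E$ is nontrivial with $k[x,y]^E=k[y]$, and clearly $E_1=\ep'$. Given $\phi\in C(\ep')$, I would form the conjugate $\Ga$-action $\phi_*E:=(\phi\otimes\id)\circ E\circ\phi^{-1}$ on $k[x,y]$. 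One checks that $\phi_*E$ again satisfies (A1) and (A2), that it is over $k$ by Remark~\ref{rem:local}~(ii), that its invariant ring equals $\phi(k[x,y]^E)=\phi(k[y])$, and that $(\phi_*E)_1=\phi\circ\ep'\circ\phi^{-1}=\ep'$ because $\phi$ commutes with $\ep'$; therefore $\phi(k[y])\in\cA_k(\ep')$. Since $f(y)\in k[y]\setminus k$, the ring $k[y]$ is $f(y)$-stable over $k$ by Example~\ref{ex:g-rigid}~(ii), i.e.\ $\cA_k(\ep')=\{k[y]\}$, and we conclude $\phi(k[y])=k[y]$.

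The rest is reading off the shape of $\phi=(f_1,f_2)$. From $\phi(k[y])=k[y]$, the automorphism $\phi$ restricts to an element of $\Aut_k k[y]$, so $f_2=\phi(y)=by+c$ with $b\in k^*$ and $c\in k$. Consequently $k[y][f_1]=k[f_1,f_2]=k[x,y]=k[y][x]$ and $f_1$ is transcendental over $k[y]$, so $f_1$ generates the polynomial ring $k[y][x]$ over $k[y]$, which forces $f_1=ax+g(y)$ with $a\in k[y]^*=k^*$ and $g\in k[y]$. Finally, comparing $\phi\ep'(x)=ax+g+f(by+c)$ with $\ep'\phi(x)=ax+g+af(y)$ gives $af(y)=f(by+c)$, placing $\phi$ in the set~(\ref{eq:C(tau):fixed points}).

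I expect no conceptual obstacle; the only delicate point is the bookkeeping for conjugates of $\Ga$-actions — confirming that $\phi_*E$ is genuinely a $\Ga$-action over $k$, that its invariant ring is precisely $\phi(k[x,y]^E)$, and that $(\phi_*E)_1=\phi\circ\ep'\circ\phi^{-1}$. Once that is settled, the identification $\cA_k(\ep')=\{k[y]\}$ coming from Theorem~\ref{thm:a-rigid} (through Example~\ref{ex:g-rigid}~(ii)) carries the whole argument. This is in sharp contrast with the fixed point free situation occupying the rest of the section, where $k[y]$ is \emph{not} $f$-stable (Example~\ref{ex:a-rigid1}) and a considerably finer analysis of the centralizer is needed.
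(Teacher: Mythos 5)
Your proof is correct and follows essentially the same route as the paper: both arguments conjugate the canonical $\Ga$-action inducing $\ep'$ by $\phi$ (the paper simply writes $\phi_*E$ explicitly in two-line notation, which makes the $\Ga$-action axioms and the identity $(\phi_*E)_1=\ep'$ immediate), invoke the $f(y)$-stability of $k[y]$ from Example~\ref{ex:g-rigid}~(ii) to get $\phi(k[y])=k[y]$, and then read off the shape of $\phi$ and the relation $af(y)=f(by+c)$. The only cosmetic difference is that the paper cites Lemma~\ref{lem:triangular} for the final form of $\phi$ where you argue directly.
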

\begin{proof}
It is easy to check that 
(\ref{eq:C(tau):fixed points}) is contained in $C(\ep ')$. 
For the reverse inclusion, 
pick any $\phi =(g_1,g_2)\in C(\ep ')$. 
Then, 
we have 
$\ep '=\phi \ep '\phi ^{-1}=
\begin{psmallmatrix*}
g_1 & g_2\\
g_1+f(g_2) & g_2
\end{psmallmatrix*}$. 
Hence, the $\Ga $-action 
$E:=\begin{psmallmatrix*}
g_1 & g_2\\
g_1+f(g_2)T & g_2
\end{psmallmatrix*}$ 
on $k[x,y]$ satisfies $E_1=\ep '$. 
Thus, 
$k[x,y]^E=k[g_2]$ belongs to $\cA _k(\ep ')$. 
On the other hand, 
since 
$k[y]$ is $f(y)$-stable over $k$ by Example~\ref{ex:g-rigid} (ii), 
we know that $\cA _k(\ep ')=\{ k[y] \} $. 
Therefore, we get $k[g_2]=k[y]$. 
This implies that $(g_1,g_2)=(ax+g,by+c)$ 
for some $a,b\in k^*$, $c\in k$ and $g\in k[y]$ 
by Lemma~\ref{lem:triangular} with $(f_1,f_2):=(y,x)$. 
It remains to check that $af(y)=f(by+c)$.

By the definition of $\ep '$, 
we have $\ep '(ax)=ax+af(y)$. 
We also have 
$$
\ep '=
\begin{pmatrix}
g_1 &\! g_2\\
g_1+f(g_2) &\! g_2
\end{pmatrix}
=
\begin{pmatrix}
ax+g &\! by+c\\
ax+g+f(by+c) &\! by+c
\end{pmatrix}
=\begin{pmatrix}
ax &\! y\\
ax+f(by+c) &\! y
\end{pmatrix}. 
$$
This implies that $\ep '(ax)=ax+f(by+c)$. 
Therefore, 
we get $af(y)=f(by+c)$. 
\end{proof}

\subsection{Application}\label{subsect:appl}

The following lemma establishes a relation between 
the centralizer of a fixed point free elementary automorphism 
and rank one $\Ga $-actions.

\begin{lem}\label{lem:rank 1}
For $f\in k[x_2,\ldots ,x_n]\sm \zs $, 
set $\tau :=(x_1+f,x_2,\ldots ,x_n)\in \Aut _k\kx $. 

\nd{\rm (i)} 
Let $A$ be a subring of $\kx ^{\tau }$ and $z\in \kx $. 
If $A[z]=\kx $, 
then there exists $c\in k^*$ such that 
$\tau (z)=z+cf$. 

\nd{\rm (ii)} 
For a $k$-subalgebra $A$ of $\kx $, 
the following conditions {\rm (a)} and {\rm (b)} are equivalent. 

\nd {\rm (a)} 
$A=\kx ^E$ for some rank one $\Ga $-action $E$ on $\kx $ 
with $E_1=\tau $.

\nd {\rm (b)} 
$A=\psi (k[x_2,\ldots ,x_n])$ 
for some 
$\psi \in \Aut _k\kx $ 
with 
$\tau =
\begin{psmallmatrix}
\psi (x_1)  &\psi (x_2)&\cdots &\psi (x_n)\\
\psi (x_1)+f&\psi (x_2)&\cdots &\psi (x_n)
\end{psmallmatrix}$.

\nd{\rm (iii)} 
Assume that $f$ is in $k^*$. 
Then, 
$\psi \in \Aut _k\kx $ belongs to $C(\tau )$ 
if and only if 
$\tau =
\begin{psmallmatrix}
\psi (x_1)  &\psi (x_2)&\cdots &\psi (x_n)\\
\psi (x_1)+f&\psi (x_2)&\cdots &\psi (x_n)
\end{psmallmatrix}$. 
If this is the case, 
{\rm (b)} in {\rm (ii)} is equivalent to

\nd {\rm (b$'$)} 
$A=\psi (k[x_2,\ldots ,x_n])$ 
for some $\psi \in C(\tau )$.

\end{lem}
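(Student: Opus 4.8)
The plan is to establish (i)---which is the essential point---and then derive (ii) and (iii) from it together with the facts of Section~\ref{sect:Ga-action}. Throughout write $C:=k[x_2,\ldots,x_n]$, so that $\kx=C[x_1]$ and $\tau$ is the $C$-algebra automorphism $x_1\mapsto x_1+f$ of $\kx$ with $f\in C\sm\zs$; recall $\tau^p=\id$ and $\tau\ne\id$.

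\emph{Part (i).} The first move is to apply Lemma~\ref{lem:invariant ring} with base ring $C$ to identify $\kx^\tau=C[w]$, where $w:=x_1^p-f^{p-1}x_1\in\kx$. Since $A\subseteq\kx^\tau=C[w]$ and $\kx=A[z]$, we get $\kx=A[z]\subseteq C[w,z]\subseteq\kx$, hence $\kx=C[w,z]$; in particular $x_1=Q(w,z)$ for some $Q\in C[Y,Z]$. Put $h:=\tau(z)-z$. Applying $\tau$, which fixes $C$ and $w$, to $x_1=Q(w,z)$ gives $x_1+f=Q(w,z+h)$, so $f=Q(w,z+h)-Q(w,z)$; as $Q(Y,Z+H)-Q(Y,Z)$ is divisible by $H$ in $C[Y,Z,H]$, substituting $(Y,Z,H)=(w,z,h)$ yields $f=hg$ for some $g\in\kx$. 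Here $h\ne0$, for $h=0$ would force $\tau$ to fix $\kx=A[z]$ and hence equal $\id$. Since $\deg_{x_1}f=0$, comparing $x_1$-degrees in $f=hg$ forces $h\in C$ (and $g\in C$). On the other hand $h=\tau(z)-z$ lies in $I(\tau)=f\kx$ by Lemma~\ref{lem:fixed points}, so $h=fc$ with $c\in\kx$; combined with $h,f\in C$ this gives $c\in Q(C)\cap\kx=C$. Then $f=hg=fcg$ forces $cg=1$ in $C$, so $c\in C^*=k^*$, and $\tau(z)=z+cf$.

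\emph{Part (ii).} For (b)$\Rightarrow$(a): given $\psi$ as in (b), set $B:=\psi(k[x_2,\ldots,x_n])$ and $r:=\psi(x_1)$, so $\kx=B[r]=B^{[1]}$ and $\tau$ restricts to a $B$-automorphism of $B[r]$ with $\tau(r)=r+f$. Writing this automorphism as $r\mapsto ur+b$ with $u\in B^*=k^*$ and $b\in B$, the relation $\tau^p=\id$ gives $u^p=1$, hence $u=1$ and therefore $f=b\in B$. Then $E:=\left(\begin{smallmatrix}\psi(x_1)&\psi(x_2)&\cdots&\psi(x_n)\\ \psi(x_1)+fT&\psi(x_2)&\cdots&\psi(x_n)\end{smallmatrix}\right)$ is a $\Ga$-action on $\kx$ over $B$ by Example~\ref{ex:Ga-action on R[x]}, with $E_1=\tau$ and $\kx^E=B=A$, and it has rank one by Remark~\ref{rem:rank 1} since $A=\psi(k[x_2,\ldots,x_n])$ and $A\ne\kx$. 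For (a)$\Rightarrow$(b): by Remark~\ref{rem:rank 1} there is $\phi\in\Aut_k\kx$ with $A=\kx^E=\phi(k[x_1,\ldots,x_{n-1}])$, and after composing $\phi$ with a permutation of the variables we may write $A=\psi_0(k[x_2,\ldots,x_n])$ for some $\psi_0\in\Aut_k\kx$. Then $\psi_0(x_i)\in A\subseteq\kx^{E_1}=\kx^\tau$ for $i\ge2$, while part (i) applied with $z:=\psi_0(x_1)$ (note $A[\psi_0(x_1)]=\kx$) gives $\tau(\psi_0(x_1))=\psi_0(x_1)+cf$ for some $c\in k^*$. Replacing $\psi_0$ by $\psi:=\psi_0\circ(c^{-1}x_1,x_2,\ldots,x_n)$ leaves $A$ unchanged and yields the matrix identity of (b).

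\emph{Part (iii), and the main obstacle.} Assume $f\in k^*$. Since $\tau(x_1)=x_1+f$, $\tau(x_i)=x_i$ for $i\ge2$, and $\psi$ is a $k$-algebra map, one has $\psi(\tau(x_1))=\psi(x_1)+f$ and $\psi(\tau(x_i))=\psi(x_i)$; hence $\psi\tau=\tau\psi$ (equality on the generators $x_i$) is equivalent to $\tau(\psi(x_1))=\psi(x_1)+f$ together with $\tau(\psi(x_i))=\psi(x_i)$ for $i\ge2$, i.e.\ to the matrix identity displayed in (b). Thus the condition on $\psi$ in (b) is exactly ``$\psi\in C(\tau)$'', so (b) and (b$'$) express the same thing. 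I expect the crux to be the opening of (i): upgrading the hypothesis $A\subseteq\kx^\tau$ to $\kx=C[w,z]$ via Lemma~\ref{lem:invariant ring}, after which the relation $x_1=Q(w,z)$ forces the divisibility $h\mid f$ and the rest is degree counting in $x_1$ plus Lemma~\ref{lem:fixed points}. In (ii) the one delicate step is the vanishing $u=1$, which is where $\tau^p=\id$ is used and which is precisely what puts $f$ into the invariant ring so that $E$ is well defined.
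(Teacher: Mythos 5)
Your proof is correct, and parts (ii) and (iii) follow essentially the same route as the paper's; the genuine divergence is in part (i). The paper's argument for (i) is a symmetric computation of the ideal $I(\tau)$ of Lemma~\ref{lem:fixed points} with respect to two generating systems: since $A\subset \kx ^{\tau }$ and $A[z]=\kx $, the map $\tau $ lies in $\Aut _AA[z]$, so $I(\tau )=(\tau (z)-z)\kx $, while the standard generators give $I(\tau )=f\kx $; the equality of these two principal ideals makes $\tau (z)-z$ and $f$ associates at once, whence $\tau (z)-z\in k^*f$. You obtain only the inclusion $\tau(z)-z\in f\kx$ from that lemma, and get the reverse divisibility by a separate mechanism: identifying $\kx ^{\tau }=C[w]$ with $C:=k[x_2,\ldots ,x_n]$ and $w:=x_1^p-f^{p-1}x_1$ via Lemma~\ref{lem:invariant ring}, writing $x_1$ as a polynomial in $w,z$ over $C$, and applying $\tau $ to extract $h\mid f$. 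This is sound, but it costs you the $x_1$-degree count and the verification $Q(C)\cap \kx =C$, all of which the two-sided $I(\tau )$ computation renders unnecessary; what the paper's route buys is brevity and independence from the explicit form of $\kx^\tau$. Conversely, your treatment of (b)$\Rightarrow$(a) in (ii) adds something the paper leaves implicit: you check that $f$ actually lies in $\psi (k[x_2,\ldots ,x_n])$ (via $u^p=1\Rightarrow u=1$, i.e.\ via $\tau ^p=\id $), which is genuinely needed for the displayed map $E$ to satisfy (A2) and hence be a $\Ga $-action with invariant ring $A$. Both proofs are valid; yours is longer in (i) and more careful in (ii).
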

\begin{proof}
(i) 
Since $\tau $ lies in $\Aut _AA[z]$ by assumption, 
we have $I(\tau )=(\tau (z)-z)A[z]=(\tau (z)-z)\kx $ 
by Lemma~\ref{lem:fixed points}. 
We also have $I(\tau )=(\tau (x_1)-x_1)k[x_1,\ldots ,x_n]=f\kx $. 
Hence, 
we get $(\tau (z)-z)\kx =f\kx $. 
This proves $\tau (z)-z\in k^*f$.

(ii) 
If (a) holds, then $A=k[z_2,\ldots ,z_n]$ 
for some $(z_1,\ldots ,z_n)\in \Aut _k\kx $ by Remark~\ref{rem:rank 1}. 
Since $A[z_1]=\kx $ and $A=\kx ^E\subset \kx ^{E_1}=\kx ^{\tau }$, 
we know by (i) that $\tau (z_1)=z_1+cf$ for some $c\in k^*$. 
Then, (b) holds for 
$\psi :=(c^{-1}z_1,z_2\ldots ,z_n)$, 
since $z_2,\ldots ,z_n\in A\subset \kx ^{\tau }$.

If (b) holds, then 
$E:=\begin{psmallmatrix}
\psi (x_1)  &\psi (x_2)&\cdots &\psi (x_n)\\
\psi (x_1)+fT&\psi (x_2)&\cdots &\psi (x_n)
\end{psmallmatrix}$ 
is a rank one $\Ga $-action on $\kx $ with 
$\kx ^E=k[\psi (x_2),\ldots ,\psi (x_n)]=A$ and $E_1=\tau $. 
Hence, (a) holds.

(iii) 
Note that $\psi (f)=f$ since $f$ is in $k^*$. 
Hence, we have $\psi \in C(\tau )$ 
if and only if 
$$\tau =\psi \tau \psi ^{-1}=
\begin{psmallmatrix}
\psi (x_1)  &\psi (x_2)&\cdots &\psi (x_n)\\
\psi (x_1)+\psi (f)&\psi (x_2)&\cdots &\psi (x_n)
\end{psmallmatrix}
=\begin{psmallmatrix}
\psi (x_1)  &\psi (x_2)&\cdots &\psi (x_n)\\
\psi (x_1)+f&\psi (x_2)&\cdots &\psi (x_n)
\end{psmallmatrix}.
$$ 
The last assertion is clear. 
\end{proof}

Finally, let $R$ be a UFD with $k:=Q(R)$, 
and let $\sigma \in \Aut _RR[X_1,X_2]$ be of order $p$. 
Here, we use a system of variables $X_1$, $X_2$ 
other than $x_1$, $x_2$. 
By Theorem~\ref{thm:Osaka}, 
there exists $(x_1,x_2)\in \Aut _kk[X_1,X_2]$ 
such that 
the extension $\tau \in \Aut _kk[X_1,X_2]$ of $\sigma $ 
satisfies $f:=\tau (x_1)-x_1\in k[x_2]$ 
and $\tau (x_2)=x_2$. 
If $f$ is not in $k$, 
then $k[x_2]$ is $f$-stable over $k$ by Example~\ref{ex:g-rigid} (ii). 
In this case, 
we can use Corollary~\ref{cor:general criterion}.

Assume that $f$ is in $k^*$. 
Note that $\tau =
\begin{psmallmatrix}
x_1 & x_2 \\
x_1+f & x_2
\end{psmallmatrix}
=(x_1+f,x_2)$ 
in our notation. 
Hence, 
we can define a subgroup $H(f)$ 
of $C(\tau )$ as in (\ref{eq:C(tau)generators}). 
Then, 
the main result of Section~\ref{subsect:appl} 
is stated as follows.

\begin{thm}\label{thm:fixed point free}
Let $R$, $\sigma $, $x_1$, $x_2$ and $f$ be as above, 
and assume that $f$ is in $k^*$. 
Then, 
$\sigma $ is exponential over $R$ 
if and only if 
there exists 
$\psi \in H(f)$ 
such that the $\Ga $-action 
$\widetilde{E}=
\begin{psmallmatrix}
\psi (x_1) & \psi (x_2) \\
\psi (x_1)+fT & \psi (x_2)
\end{psmallmatrix}$ 
on $k[X_1,X_2]$ restricts to $R[X_1,X_2]$. 
\end{thm}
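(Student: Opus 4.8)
The plan is to prove both implications of the stated equivalence. The ``if'' direction is short; the ``only if'' direction rests on the Rentschler--Miyanishi theorem (Theorem~\ref{thm:RM}), the centralizer description of Theorem~\ref{thm:C(tau)}, and the Modification Lemma (Lemma~\ref{prop:Gauss}). For ``if'', suppose $\psi\in H(f)$ and that $\widetilde{E}=\begin{psmallmatrix}\psi(x_1)&\psi(x_2)\\\psi(x_1)+fT&\psi(x_2)\end{psmallmatrix}$ restricts to a $\Ga$-action $E$ on $R[X_1,X_2]$. Since $\widetilde{E}$ is a $k$-algebra homomorphism, $R\subset k\subset R[X_1,X_2]^E$, so $E$ is over $R$; and since $\psi\in H(f)\subset C(\tau)$ with $f\in k^*$ (hence $\psi(f)=f$), Lemma~\ref{lem:rank 1}~(iii) gives $\widetilde{E}_1=\tau$, whence $E_1=\sigma$ upon restriction, so $\sigma$ is exponential over $R$.

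For ``only if'', I would start from a $\Ga$-action $F$ on $R[X_1,X_2]$ over $R$ with $F_1=\sigma$, and extend it along $R\sm\zs\subset R[X_1,X_2]^F$ to a $\Ga$-action $\widetilde{F}$ on $k[X_1,X_2]$ over $k$. By uniqueness of extensions $\widetilde{F}_1=\tau$, so $\widetilde{F}$ is nontrivial, and Theorem~\ref{thm:RM} forces it to have rank one. Lemma~\ref{lem:rank 1}~(ii), combined with (iii) (valid since $f\in k^*$), then yields $k[X_1,X_2]^{\widetilde{F}}=\psi_0(k[x_2])$ for some $\psi_0\in C(\tau)$. Invoking Theorem~\ref{thm:C(tau)}, I write $\psi_0=\eta\rho$ with $\eta\in H(f)$ and $\rho\in H_0$; since $\rho(x_2)\in k^*x_2+k$ we have $\rho(k[x_2])=k[x_2]$, and therefore $k[X_1,X_2]^{\widetilde{F}}=\eta(k[x_2])$ with $\eta\in H(f)\subset C(\tau)$.

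It remains to deduce that the standard action $\widetilde{E}:=\begin{psmallmatrix}\eta(x_1)&\eta(x_2)\\\eta(x_1)+fT&\eta(x_2)\end{psmallmatrix}$ restricts to $R[X_1,X_2]$; this is the crux. I would set $B:=R[X_1,X_2]$ (a UFD, as $R$ is) and $(S,r):=(R\sm\zs,\eta(x_1))$, so that $B_S=k[X_1,X_2]=\eta(k[x_2])[\eta(x_1)]=B_S^F[r]$, i.e.\ $(S,r)\in\cP_F$. By Lemma~\ref{prop:Gauss}~(ii), which is exactly the situation of Example~\ref{ex:modification}, applied with $\alpha:=1\in B^F$, the $\Ga$-action $E'$ of~(\ref{eq:E'}) restricts to $B$; and by Lemma~\ref{prop:Gauss}~(i) its multiplier is $\widetilde{F}_1(\eta(x_1))-\eta(x_1)=\tau(\eta(x_1))-\eta(x_1)=\eta(x_1+f)-\eta(x_1)=f$, using $\eta\in C(\tau)$ and $\eta(f)=f$. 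Hence $E'=\widetilde{E}$ with $\psi:=\eta\in H(f)$, and $\widetilde{E}$ restricts to $R[X_1,X_2]$, as required.

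I expect the rank-one normalization (Rentschler--Miyanishi together with Theorem~\ref{thm:C(tau)}, using that $H_0$ preserves $k[x_2]$) to be routine. The genuinely delicate point is the last step: in characteristic $p$ the additive polynomial $\widetilde{F}(\eta(x_1))-\eta(x_1)\in\eta(k[x_2])[T]$ need not equal $fT$, so the lifted action $\widetilde{F}$ is in general \emph{not} the standard action $\widetilde{E}$, and one cannot simply restrict $\widetilde{F}$. Bridging this gap---replacing $\widetilde{F}$ by $\widetilde{E}$ while retaining integrality over $R[X_1,X_2]$---is precisely what the Gauss-type Lemma~\ref{lem:Gauss}, and the Modification Lemma built on it, are designed to accomplish, and this is where I expect the main obstacle to lie.
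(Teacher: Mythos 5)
Your proof is correct and follows essentially the same route as the paper's: rank one via Rentschler--Miyanishi, identification of the invariant ring as $\psi _0(k[x_2])$ with $\psi _0\in C(\tau )$ via Lemma~\ref{lem:rank 1}, reduction from $C(\tau )=H(f)H_0$ to $H(f)$ (you do this by noting $\rho (k[x_2])=k[x_2]$ for $\rho \in H_0$, while the paper notes the displayed $\Ga $-action is unchanged under postcomposition with $H_0$ --- the two observations are interchangeable), and then the Modification Lemma (equivalently Example~\ref{ex:modification}) to restrict the standardized action $\begin{psmallmatrix}\eta (x_1)&\eta (x_2)\\ \eta (x_1)+fT&\eta (x_2)\end{psmallmatrix}$ to $R[X_1,X_2]$, with the multiplier computed to be $f$ exactly as in the paper. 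The only blemish is the chain ``$R\subset k\subset R[X_1,X_2]^E$'' in the ``if'' direction: $k\not\subset R[X_1,X_2]$ unless $R$ is a field, and what you actually need (and clearly intend) is that $\widetilde{E}$ is a $k$-algebra map with $R\subset k$, so its restriction $E$ fixes $R$.
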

\begin{proof}
Let $\psi \in H(f)$ be as in the theorem. 
Since 
$\psi \in H(f)\subset C(\tau )$ and $f\in k^*$, 
we have 
$\tau =\tau ^{\psi }=(x_1+f,x_2)^{\psi }=
\begin{psmallmatrix}
\psi (x_1) & \psi (x_2) \\
\psi (x_1)+f & \psi (x_2) \\
\end{psmallmatrix}=\widetilde{E}_1$ 
(cf.~Lemma~\ref{lem:rank 1} (iii)). 
Since $\widetilde{E}$ restricts to $R[X_1,X_2]$ by assumption, 
it follows that 
$\sigma $ is exponential over $R$.

Conversely, 
assume that $\sigma $ is exponential over $R$. 
We would like to 
show that there exists 
$\psi \in H(f)$ as in the theorem. 
Note that 
$\begin{psmallmatrix}
\psi (x_1+u_1)    & \psi (ax_2+u_2)\\
\psi (x_1+u_1)+fT & \psi (ax_2+u_2)
\end{psmallmatrix}
=\begin{psmallmatrix}
\psi (x_1)    & \psi (x_2)\\
\psi (x_1)+fT & \psi (x_2)
\end{psmallmatrix}$ 
for any $\psi \in \Aut _kk[X_1,X_2]$, 
$a\in k^*$ and $u_1,u_2\in k$. 
Hence, 
we may take $\psi $ from $H(f)H_0$, 
and thus from $C(\tau )$ thanks to Theorem~\ref{thm:C(tau)}.

By assumption, 
there exists 
a $\Ga $-action $F$ on $R[X_1,X_2]$ over $R$ with $F_1=\sigma $. 
The extension $\widetilde{F}$ of $F$ to $k[X_1,X_2]$ has rank one 
by Theorem~\ref{thm:RM}. 
We also have $\widetilde{F}_1=\tau =(x_1+f,x_2)$. 
Hence, 
$k[X_1,X_2]^{\widetilde{F}}$ satisfies (a) of Lemma~\ref{lem:rank 1}~(ii). 
Since $f$ is in $k^*$, 
this implies by Lemma~\ref{lem:rank 1} (iii) that 
$k[X_1,X_2]^{\widetilde{F}}=\psi (k[x_2])=k[p_2]$ 
for some $\psi \in C(\tau )$, 
where $p_i:=\psi (x_i)$. 
In this situation, 
the $\Ga $-action 
$\widetilde{E}:=\begin{psmallmatrix}
p_1 & p_2\\
p_1+(\widetilde{F}_1(p_1)-p_1)T & p_2
\end{psmallmatrix}$ 
on $k[X_1,X_2]$ 
restricts to $R[X_1,X_2]$ by Example~\ref{ex:modification}, 
since $R$ is a UFD by assumption. 
Moreover, 
$\widetilde{F}_1(p_1)-p_1=\tau (p_1)-p_1=f$ holds by (\ref{eq:C(ep)}), 
since $(p_1,p_2)\in C(\tau )$. 
Thus, 
$\widetilde{E}$ 
is equal to 
$\begin{psmallmatrix}
p_1 & p_2\\
p_1+fT & p_2
\end{psmallmatrix}$. 
Therefore, 
$\psi $ satisfies the required condition. 
\end{proof}

\section{$\Ga $-actions inducing an elementary automorphism}
\label{sect:set of actions}
\setcounter{equation}{0}

Let $k$ be a field. 
For $t\in k^*$, 
we set $\ep :=(x_1+t,x_2,\ldots ,x_n)\in \Aut _k\kx $. 
In this section, 
we investigate the set $\cA _k(\ep )$ mainly when $n\ge 3$. 
It turns out that for $r=1,\ldots ,n-1$, 
there exists $A\in \cA _k(\ep )$ with $\gamma (A)=n-r$, 
or equivalently, 
there exists a $\Ga $-action $E$ on $\kx $ of rank $r$ 
with $E_1=\ep $. 
We note that if $p=0$, 
then $E:=(x_1+tT,x_2,\ldots ,x_n)$ is the only $\Ga $-action on $\kx $ 
satisfying $E_1=\ep $.

Replacing $x_1$ with $t^{-1}x_1$, 
we may assume without loss of generality that $t=1$. 
So, 
let $\ep :=(x_1+1,x_2,\ldots ,x_n)$ 
in what follows. 
Note by Lemma~\ref{lem:invariant ring} that 
\begin{equation}\label{eq:k[x]^ep}
\kx ^{\ep }=k[x_1^p-x_1,x_2,\ldots ,x_n]. 
\end{equation}

\subsection{}\label{subsect:A(e)1}
By Lemma~\ref{lem:rank 1} (ii) and (iii), 
we have 
\begin{equation}\label{eq:A(e) rank 1}
\{ A\in \cA _k(\ep )\mid \gamma (A)=n-1\} 
=\{ k[f_2,\ldots ,f_n]\mid (f_1,\ldots ,f_n)\in C(\ep )\} . 
\end{equation}
If $n=2$, 
then $\cA _k(\ep )$ is equal to (\ref{eq:A(e) rank 1}) 
due to Theorem~\ref{thm:RM}. 
We have a complete description of it thanks to 
Theorem~\ref{thm:C(tau)}.

We remark that $\phi \in \Aut _k\kx $ belongs to $C(\ep )$ 
if and only if $\phi $ has the form $(x_1+v_1,v_2,\ldots ,v_n)$ 
for some $v_i\in \kx ^{\ep }$. 
Hence, 
$C(\ep )$ contains the subgroup $C_0(\ep )$ of $\T _n(k)$ generated by 
automorphisms 
$(x_1,\ldots ,x_{i-1},ax_i+g,x_{i+1},\ldots ,x_n)$, 
where 
$$
\left\{ \begin{array}{l}
\text{$a=1$ and $g\in k[x_2,\ldots ,x_n]$ if $i=1$},\\
\text{$a\in k^*$ and 
$g\in k[x_1^p-x_1,x_2,\ldots ,x_{i-1},x_{i+1},
\ldots ,x_n]$ if $i\ne 1$.} 
\end{array}\right.
$$
Theorem~\ref{thm:C(tau)} implies that 
$C(\ep )=C_0(\ep )$ when $n=2$. 
However, 
it is not clear if $C(\ep )=C_0(\ep )$ holds when $n\ge 3$.

For example, 
set $f:=x_2x_3+x_1-x_1^p$ 
and $R:=k(x_3)[x_4,\ldots ,x_n]$. 
Then, we have $R[x_1,x_2]=R[x_1,f]$. 
We claim that 
the $\Ga $-action $\widetilde{F}:=\begin{psmallmatrix}
x_1     &f \\
x_1+x_3T&f 
\end{psmallmatrix}$ 
on $R[x_1,x_2]$ restricts to a $\Ga $-action $F$ on $\kx $, 
since 
the equation $\widetilde{F}(f)=f$ implies 
$$
\widetilde{F}(x_2)
=x_2+x_3^{-1}(x_1-x_1^p-\widetilde{F}(x_1-x_1^p))
=x_2-T+x_3^{p-1}T^p\in \kx [T].
$$
Pick any 
$h\in k[f,x_3,\ldots ,x_n]\subset \kx ^F$. 
Then, 
we see from the above remark that 
\begin{equation}\label{eq:F_h}
F_h=(x_1+x_3h,x_2-h+x_3^{p-1}h^p,x_3,\ldots ,x_n)
\in C(\ep ). 
\end{equation}
When $n=3$, 
we do not know if $F_f$ belongs to $C_0(\ep )$. 
In fact, 
a famous conjecture of 
Nagata~\cite{Nagata} asserts that a similar type of automorphism of $\kx $ 
does not belong to $\T _3(k)$. 
On the other hand, $F_f$ belongs to $C_0(\ep )$ when $n=4$. 
In fact, 
$F_f=\tau F_{x_4}\tau ^{-1}F_{x_4}^{-1}$ 
holds with $\tau :=(x_1,x_2,x_3,x_4+f)\in C_0(\ep )$ (cf.~\cite{Smith}), 
since 
\begin{align*}
\begin{pmatrix}
x_1\! & \! x_4\\
x_1\! & \! x_4+f\\
\end{pmatrix}\!
\begin{pmatrix}
x_1\! & \! x_4\\
x_1+x_3x_4\! & \! x_4\\
\end{pmatrix}\!
\begin{pmatrix}
x_1\! & \! x_4\\
x_1\! & \! x_4-f\\
\end{pmatrix}\!
\begin{pmatrix}
x_1\! & \! x_4\\
x_1-x_3x_4\! & \! x_4\\
\end{pmatrix}
=
\begin{pmatrix}
x_1\! & \! x_4\\
x_1+x_3f\! & \! x_4
\end{pmatrix}
\end{align*}
holds in $\Aut _{k(f,x_3)}k(f,x_3)[x_1,x_4]$. 
We see from (\ref{eq:F_h}) 
that $F_{x_4}$ is also in $C_0(\ep )$.

\subsection{}\label{subsect:A(e)2}
Let $2\le r<n$. 
We construct a $\Ga $-action $E$ on $\kx $ of rank $r$ 
with $E_1=\ep $. 
Set $f_1:=x_1+x_n^{-1}x_r^p$,  
$$
f_i:=x_i+x_n^{-1}(x_2^p+x_3^p+\cdots +x_{i-1}^p)
+x_n^{p-1}(f_1^p-f_1)
\quad\text{for}\quad i=2,\ldots ,r
$$
and $f_i:=x_i$ for $i=r+1,\ldots ,n-1$. 
Then, 
we note the following: 

\nd (a) 
$B:=k[x_n^{\pm 1}][f_1,\ldots ,f_{n-1}]$ is contained in 
$B':=k[x_n^{\pm 1}][x_1,\ldots ,x_{n-1}]$. 

\nd (b) 
$x_i\in k[x_n^{\pm 1}][f_i,f_1,x_2,\ldots ,x_{i-1}]$ for $i=2,\ldots ,r$ 
and $x_1\in k[x_n^{-1},f_1,x_r]$. 
\label{se:2--r}

\nd 
Using (b), 
we can prove 
$x_i\in B$ for $i=2,\ldots ,r,1$ by induction on $i$. 
Therefore, 
we have $B=B'$. 
We define a $\Ga $-action 
$\widetilde{E}$ on $B'$ 
over $k[x_n^{\pm 1}]$ 
by $\widetilde{E}=\begin{psmallmatrix}
f_1     &f_2&\cdots &f_{n-1}\\
f_1+T   &f_2&\cdots &f_{n-1}
\end{psmallmatrix}$.

\begin{prop}\label{prop:rank n-1}
\nd{\rm (i)} 
$\widetilde{E}$ restricts to 
a $\Ga $-action $E$ on $\kx $ with $E_1=\ep $. 

\nd{\rm (ii)} 
$\kx ^E=k[x_nf_2,\ldots ,x_nf_r,x_{r+1},\ldots ,x_n]$, 
and $E$ has rank $r$. 

\end{prop}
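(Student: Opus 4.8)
The plan is to prove (i) by a direct computation of $\widetilde{E}$ on $x_1,\ldots,x_n$, and (ii) by first computing the invariant ring of the coordinate‑translation action $\widetilde{E}$ on $B'$, then descending it to $\kx$ through a saturation argument at $x_n$, and finally reading off the rank from linear parts. For (i): since $\widetilde{E}$ fixes $k[x_n^{\pm1}]$ and $f_2,\ldots,f_{n-1}$ and sends $f_1\mapsto f_1+T$, I would solve each defining relation for $x_i$ and apply $\widetilde{E}$; using the Frobenius identity $\widetilde{E}(x_j)^p-x_j^p=(\widetilde{E}(x_j)-x_j)^p$ and $(f_1+T)^p-(f_1+T)=(f_1^p-f_1)+(T^p-T)$, this yields, for $g_i:=\widetilde{E}(x_i)-x_i$, the recursion $g_2=-x_n^{p-1}(T^p-T)$ and $g_i=-x_n^{-1}\sum_{j=2}^{i-1}g_j^p-x_n^{p-1}(T^p-T)$ for $3\le i\le r$. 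An easy induction gives $g_i\in x_n^{p-1}k[x_n][T]$, so $\widetilde{E}(x_i)\in\kx[T]$ for $2\le i\le r$, and $\widetilde{E}(x_1)=(f_1+T)-x_n^{-1}\widetilde{E}(x_r)^p=x_1+T-x_n^{-1}g_r^p\in\kx[T]$ because $x_n^{p(p-1)}\mid g_r^p$; with $\widetilde{E}(x_j)=x_j$ for $j\ge r+1$ and $\widetilde{E}(x_n)=x_n$ this shows $\widetilde{E}$ restricts to a $\Ga$-action $E$ on $\kx$. To identify $E_1$ I would not reuse these formulas but simply check that $\ep$ fixes $x_n$, sends $f_1\mapsto f_1+1$ and fixes $f_2,\ldots,f_{n-1}$ (the last using $(f_1+1)^p-(f_1+1)=f_1^p-f_1$), so that $\ep$ and $\widetilde{E}_1$ agree on the generators $f_1,\ldots,f_{n-1},x_n$ of $B'$, hence on $\kx$.

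For (ii), first the invariant ring of $\widetilde{E}$: on $B'=k[x_n^{\pm1}][f_1,\ldots,f_{n-1}]$ the action $\widetilde{E}$ is the standard translation in the variable $f_1$ over $k[x_n^{\pm1}][f_2,\ldots,f_{n-1}]$, so Example~\ref{ex:Ga-action on R[x]} gives $(B')^{\widetilde{E}}=k[x_n^{\pm1}][f_2,\ldots,f_{n-1}]$. Since $E$ is the restriction of $\widetilde{E}$ and $x_n\in\kx^E$, the extension formula in \S\ref{subsect:extension} identifies $(B')^{\widetilde{E}}$ with $(\kx^E)[x_n^{-1}]$, and as $\kx^E$ is factorially closed in $\kx$ by Remark~\ref{rem:local}~(i) one obtains $\kx^E=(B')^{\widetilde{E}}\cap\kx$. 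Writing $y_i:=x_nf_i\in\kx$ for $2\le i\le r$ and $A_0:=k[y_2,\ldots,y_r,x_{r+1},\ldots,x_n]$ (recall $f_i=x_i$ for $i>r$), one checks $\widetilde{E}(y_i)=y_i$, so $A_0\subset\kx^E$, while $A_0[x_n^{-1}]=k[x_n^{\pm1}][f_2,\ldots,f_{n-1}]=(B')^{\widetilde{E}}$. Hence the whole claim $\kx^E=A_0$ reduces to showing $A_0[x_n^{-1}]\cap\kx=A_0$.

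This last point is the heart of the argument. Expanding $y_i=x_nx_i+(x_2^p+\cdots+x_{i-1}^p)+x_n^p(x_1^p-x_1)+x_r^{p^2}-x_n^{p-1}x_r^p$ and reducing modulo $x_n$, i.e. applying $\rho:\kx\to\kx/(x_n)=k[x_1,\ldots,x_{n-1}]$, gives $\rho(y_2)=x_r^{p^2}$ and $\rho(y_i)-\rho(y_{i-1})=x_{i-1}^p$, so $k[\rho(y_2),\ldots,\rho(y_r)]=k[x_r^{p^2},x_2^p,\ldots,x_{r-1}^p]$; together with $\rho(x_{r+1}),\ldots,\rho(x_{n-1})$ these are $n-2$ algebraically independent elements over $k$ (powers of disjoint variables). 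It follows that $y_2,\ldots,y_r,x_{r+1},\ldots,x_{n-1},x_n$ are algebraically independent over $k$ (a nonzero relation of least degree in $x_n$ would, on reducing mod $x_n$, give a nonzero relation among the $\rho(y_i)$ and $\rho(x_j)$ unless divisible by $x_n$, against minimality), so $A_0$ is a polynomial ring and $\ker(\rho|_{A_0})=x_nA_0$. Therefore $g\in A_0$ with $g=x_nh$, $h\in\kx$, forces $\rho(g)=0$, hence $g\in x_nA_0$ and $h\in A_0$; iterating yields $A_0[x_n^{-1}]\cap\kx=A_0$, and so $\kx^E=A_0$.

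For the rank, $\gamma(\kx^E)=\gamma(A_0)$: a coordinate permutation $\phi$ with $\phi(k[x_1,\ldots,x_{n-r}])=k[x_{r+1},\ldots,x_n]\subset A_0$ gives $\gamma(A_0)\ge n-r$, while each $y_i$ has no part of degree $\le1$ (its lowest-degree term is $x_nx_i$, of degree $2$, or $x_j^p$ when $p=2$), so $A_0^{\lin}\subseteq\sum_{j>r}kx_j$ and $\gamma(A_0)\le\dim_k A_0^{\lin}\le n-r$ by Remark~\ref{rem:rank ineq}~(ii); thus $E$ has rank $n-(n-r)=r$. The step I expect to be the real obstacle is the saturation argument in the third paragraph: the rest is bookkeeping, but that is exactly where the exponents in $f_1,\ldots,f_r$ have been engineered, so that the reductions $\rho(y_i)$ telescope into pure $p$-th (and one $p^2$-th) powers of distinct variables --- which is what keeps $\kx^E$ from being strictly larger than $A_0$.
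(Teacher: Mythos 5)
Your proof is correct and follows essentially the same route as the paper's: the same recursion modulo powers of $x_n$ for (i), the same reduction-mod-$x_n$ / algebraic-independence argument to saturate $k[x_nf_2,\ldots,x_nf_r,x_{r+1},\ldots,x_n]$ at $x_n$ in (ii), and the same linear-part bound for the rank. The only cosmetic difference is that you identify $E_1=\ep $ by matching the two maps on the generators $f_1,\ldots ,f_{n-1},x_n^{\pm 1}$ of $B'$, whereas the paper observes that the discrepancies lie in $x_n(T^p-T)\kx [T]$ and hence vanish at $T=1$.
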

\begin{proof}
(i) Since $\widetilde{E}(x_i)=x_i$ for $i=r+1,\ldots ,n$, 
it suffices to show the following (c):

\nd (c) 
$\widetilde{E}(x_i)-x_i$ for $i=2,\ldots ,r$ 
and $\widetilde{E}(x_1)-x_1-T$ 
belong to $I:=x_n(T^p-T)\kx [T]$.

For $i=2,\ldots ,r$, 
the equation $\widetilde{E}(f_i)=f_i$ gives 
$$
\widetilde{E}(x_i)-x_i=
-x_n^{-1}\sum _{j=2}^{i-1}(\widetilde{E}(x_j)-x_j)^p
-x_n^{p-1}(T^p-T). 
$$
Using this, 
we can prove $\widetilde{E}(x_i)-x_i\in I$ 
for $i=2,\ldots ,r$ 
by induction on $i$, 
since 
$\widetilde{E}(x_j)-x_j\in I$ implies 
$x_n^{-1}(\widetilde{E}(x_j)-x_j)^p\in I$. 
The equation $\widetilde{E}(f_1)=f_1+T$ gives 
$\widetilde{E}(x_1)-x_1-T=-x_n^{-1}(\widetilde{E}(x_r)-x_r)^p$, 
which belongs to $I$ similarly.

(ii) 
Set $\mathcal{F}:=\{ x_nf_2,\ldots ,x_nf_r,x_{r+1},\ldots ,x_{n-1}\} 
\subset \kx $. 
Then, since 
$(x_nf_i)|_{x_n=0}=\sum _{j=2}^{i-1}x_j^p+x_r^{p^2}$ for $i=2,\ldots ,r$, 
the image of $\mathcal{F}$ in 
$\kx /x_n\kx \simeq k[x_1,\ldots ,x_{n-1}]$ 
is algebraically independent over $k$. 
This implies that $k[\mathcal{F}]\cap x_n\kx =\zs $.

From the definition of $\widetilde{E}$, 
we see that 
$B^{\widetilde{E}}=
k[x_n^{\pm 1}][f_2,\ldots ,f_{n-1}]=k[\mathcal{F}][x_n^{\pm 1}]$. 
Hence, 
we have $\kx ^E=k[\mathcal{F}][x_n^{\pm 1}]\cap \kx $. 
We show that 
$k[\mathcal{F}][x_n^{\pm 1}]\cap \kx =k[\mathcal{F}][x_n]$ 
by contradiction. 
Supposing the contrary, 
there exist $l\ge 1$ and 
$g=\sum _{i\ge 0}a_ix_n^i\in k[\mathcal{F}][x_n]$ 
such that $x_n^{-l}g\in \kx $, 
where $a_i\in k[\mathcal{F}]$ and $a_0\ne 0$. 
Then, 
since $g\in x_n^l\kx $, 
we have $a_0=g-\sum _{i\ge 1}a_ix_n^i\in x_n\kx $, 
and so $a_0\in k[\mathcal{F}]\cap x_n\kx =\zs $, 
a contradiction. 
This proves the first part of (ii).

Since $\kx ^E$ 
contains $x_{r+1},\ldots ,x_n$, 
we have $\gamma (\kx ^E)\ge n-r$. 
On the other hand, 
since $x_nf_i$ has no linear part for $i=2,\ldots ,r$, 
we see that 
$(\kx ^E)^{\lin }=kx_{r+1}+\cdots +kx_n$. 
Hence, we have $\gamma (\kx ^E)\le \dim _k(\kx ^E)^{\lin }=n-r$ 
by Remark~\ref{rem:rank ineq} (ii). 
Thus, we get 
$\gamma (\kx ^E)=n-r$. 
Therefore, 
$E$ has rank $r$. 
\end{proof}

\subsection{}\label{subsect:A(e)3}
When $n\ge 3$, 
we do not know if there exists 
a $\Ga $-action $E$ on $\kx $ of rank $n$ with $E_1=\ep $ 
(Question~\ref{q:rank3}). 
In the following, we consider the case $n=3$.

At present, 
we know of no rank three 
$\Ga $-action $E$ on $\kx $ with $\gamma (\kx ^{E_1})>0$ 
in the first place, 
although $\gamma (\kx ^{\ep })=2$. 
In relation to this, 
the author obtained the following result 
(see Theorem 3.1 of \cite{ch order} for a more general statement).

\begin{thm}[\cite{ch order}]\label{thm:ch order}
Let $n=3$ and let $E$ be a rank three $\Ga $-action on $\kx $. 
If $f_1,f_2\in \kx ^E$ are algebraically independent over $k$, 
then we have $\gamma (\kx ^{E_{f_1f_2}})=0$. 
\end{thm}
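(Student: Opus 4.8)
The plan is to reduce the theorem to a statement about the single order-$p$ automorphism $\sigma :=E_{f_1f_2}$ by choosing a local slice of $E$, and then to use the rank-three hypothesis through the equality $\gamma (\kx ^E)=0$. So I would first invoke Theorem~\ref{prop:local} to fix $r\in \kx $ and $s\in \kx ^E\sm \zs $ with $\kx _S=(\kx ^E)_S[r]=((\kx ^E)_S)^{[1]}$, where $S:=\{ s^i\mid i\ge 0\} $, and set $\lambda (T):=E(r)-r$, which lies in $\kx ^E[T]$ and is additive by Lemma~\ref{lem:R[x]}, and $a:=\lambda (f_1f_2)\in \kx ^E$. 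One verifies $a\ne 0$ — a rank-three action is nontrivial so $\lambda \ne 0$, and $f_1,f_2$ being algebraically independent over $k$ forces $\deg (f_1f_2)\ge 2$ — hence $\sigma \ne \id $ by Remark~\ref{rem:local}~(v). Since $s\in \kx ^E\subset \kx ^{\sigma }$, the automorphism $\sigma $ extends to $\kx _S$ and acts there over $(\kx ^E)_S$ by $r\mapsto r+a$; Lemma~\ref{lem:invariant ring} then gives $(\kx ^{\sigma })_S=(\kx _S)^{\sigma }=(\kx ^E)_S[w]$ with $w:=r^p-a^{p-1}r\in \kx $, so that $\kx ^{\sigma }=\kx \cap (\kx ^E)_S[w]$.

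I would then record two facts. First, since $\kx _S=(\kx ^E)_S[r]$ is a polynomial ring and $w=r^p+\cdots $ has $r$-degree $p$ with leading coefficient $1$, any $h\in \kx ^{\sigma }$ has a unique expression $h=\sum _{i=0}^mc_iw^i$ with $c_i\in (\kx ^E)_S$ and $c_m\ne 0$, and then $h$ has $r$-degree exactly $pm$ over $(\kx ^E)_S$ (no cancellation of the top terms). Second, applying Lemma~\ref{lem:fixed points} once to $\kx =k[x_1,x_2,x_3]$ and once to $\kx _S=(\kx ^E)_S[r]$ shows that the ideal $I(\sigma )$ of $\kx $ satisfies $I(\sigma )\kx _S=(\sigma (r)-r)\kx _S=f_1f_2\kx _S$.

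Now suppose, for contradiction, that $\gamma (\kx ^{\sigma })\ge 1$, so there is $(z_1,z_2,z_3)\in \Aut _k\kx $ with $z_1\in \kx ^{\sigma }$. If $z_1\in \kx ^E$, then $k[z_1]\subset \kx ^E$ gives $\gamma (\kx ^E)\ge 1$, contradicting rank three; and since $\kx ^E$ is algebraically closed in $\kx $ (Remark~\ref{rem:local}~(iii)), $z_1\notin \kx ^E$ makes $z_1$ transcendental over $\kx ^E$, so $z_1,f_1,f_2$ are algebraically independent over $k$. Writing $z_1=\sum _{i=0}^mc_iw^i$ as above, it remains to rule out $m\ge 1$, since $m=0$ would put $z_1$ in $(\kx ^E)_S\cap \kx =\kx ^E$ (factorial closedness of $\kx ^E$ in $\kx $) — the contradiction we want. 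To force $m=0$ I would argue with the two recorded facts together: by Lemma~\ref{lem:fixed points}, $I(\sigma )=(\sigma (z_2)-z_2,\sigma (z_3)-z_3)\kx $ is two-generated in the UFD $\kx $, so $I(\sigma )=\delta J$ with $\delta $ the $\gcd $ of the two generators and $J$ either the unit ideal or of height $\ge 2$; comparing with $I(\sigma )\kx _S=f_1f_2\kx _S$ and using that the prime factors of $f_1f_2$ and of $s$ all lie in the factorially closed ring $\kx ^E$ pins $\delta $ down up to units and forces $J$ to meet $S$. Confronting this with the value $\deg _rz_1=pm$ and the explicit form $w=r^p-(f_1f_2)^{p-1}r$ — where the presence of the two algebraically independent factors $f_1,f_2$ is exactly what rules out a degenerate coincidence among $\delta $, $s$ and the leading coefficient $c_m$ — then leaves $m=0$ as the only possibility.

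The step I expect to be the main obstacle is this last one: controlling which coordinates of $\kx $ can lie in $(\kx ^E)_S[w]$, i.e.\ bounding their $r$-degree by comparing the $r$-adic filtration on $\kx _S$ with the total-degree filtration on $\kx $ and analysing the two-generated ideal $I(\sigma )$ attached to a hypothetical invariant coordinate. Both hypotheses are essential here: the rank-three hypothesis supplies $\gamma (\kx ^E)=0$ to close the argument, while algebraic independence of $f_1,f_2$ keeps the product $f_1f_2$ rigid enough; it is quite possible that the cleanest route runs through the sharper structural results of \cite{ch order}.
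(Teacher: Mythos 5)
A point of reference first: the paper does not prove this theorem — it is quoted from \cite{ch order} (Theorem 3.1 there, in more general form) — so there is no in-paper proof to compare against. Judged on its own, your proposal sets up reasonable objects but contains two concrete errors and a genuine gap at exactly the step that carries the content of the theorem. The errors: (1) You set $a:=\lambda (f_1f_2)$ and claim $a\ne 0$ because $\lambda \ne 0$ and $\deg (f_1f_2)\ge 2$. This is a non sequitur. Since $\lambda (T)=\sum _ic_iT^{p^i}$ is additive, $\lambda (u)=0$ is a polynomial identity among elements of $\kx ^E$ (for instance it holds whenever $c_0=-c_1u^{p-1}$ for $\lambda =c_0T+c_1T^p$), and the total degree of $u=f_1f_2$ in $x_1,x_2,x_3$ has no bearing on it. Showing $E_{f_1f_2}\ne \id $ is itself part of what must be proved, since otherwise $\gamma (\kx ^{\sigma })=3$. (2) You write $I(\sigma )\kx _S=(\sigma (r)-r)\kx _S=f_1f_2\kx _S$ and $w=r^p-(f_1f_2)^{p-1}r$; but $\sigma (r)-r=\lambda (f_1f_2)$, not $f_1f_2$, so the correct statements are $I(\sigma )\kx _S=\lambda (f_1f_2)\kx _S$ and $w=r^p-\lambda (f_1f_2)^{p-1}r$. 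Since your closing argument leans on ``the two algebraically independent factors $f_1,f_2$'' appearing in these expressions, this is not cosmetic: $\lambda (f_1f_2)$ is divisible by $f_1f_2$ (as $\lambda (0)=0$) but in general has further factors, and its factorization is what you would actually have to control.

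The gap: everything up to ``it remains to rule out $m\ge 1$'' is routine, and the theorem \emph{is} the assertion that no coordinate of $\kx $ lies in $\bigl(\kx \cap (\kx ^E)_S[w]\bigr)\sm (\kx ^E)_S$. The paragraph meant to establish this (``comparing \dots\ pins $\delta $ down up to units and forces $J$ to meet $S$. Confronting this with the value $\deg _rz_1=pm$ \dots\ leaves $m=0$ as the only possibility'') contains no actual deduction: you never say how the two-generatedness of $I(\sigma )$, the principality of $I(\sigma )\kx _S$, and the $r$-degree $pm$ of $z_1$ interact to exclude $m\ge 1$, and you acknowledge as much yourself. Note also that it is precisely here that the hypothesis that the parameter is a \emph{product} of two algebraically independent invariants must enter essentially: the paper records in \S \ref{subsect:A(e)3} that it is not even known whether $\gamma (\kx ^{E_a})>0$ can occur for a rank-three $E$ and a single $a\in \kx ^E$, so an argument that does not visibly exploit the product structure of $f_1f_2$ (for example, that the fixed locus of $\sigma $ contains the two independent hypersurfaces $V(f_1)$ and $V(f_2)$) cannot be complete. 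As it stands the proposal is a plausible reduction plus an unproved core claim; the actual argument must be taken from the proof of Theorem 3.1 in \cite{ch order}.
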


However, 
there exists a rank three $\Ga $-action $E$ on $\kx $ 
satisfying 
$\kx ^E\subset \kx ^{\ep }$.

\begin{rem}\label{rem:BEBT}\rm 
Let $\tau \in \Aut B$ be of order $p$, 
and $E$ a $\Ga $-action on $B$ with $B^E\subset B^{\tau }$. 
Pick any $(S,r)\in \cP _E$ (cf.~\S \ref{sect:modification}). 
Then, 
$\tau $ extends to an element 
$\widetilde{\tau }$ of $\Aut _{B_S^E}B_S=\Aut _{B_S^E}B_S^E[r]$, 
since $S\subset B^E\subset B^{\tau }$. 
Moreover, we have

\nd (i) 
$a:=\widetilde{\tau }(r)-r\in B_S^E$. 
Actually, we may regard $\widetilde{\tau }\in 
\Aut _{B_S^E}B_S^E[r]=\J _1(B_S^E)$, 
so $\widetilde{\tau }(r)$ is in $r+B_S^E$ 
by Remark~\ref{rem:strict triangular} (i).   
Hence, 
$\tau $ is a quasi-translation of $B$ over $B^E$.

\nd (ii) The $\Ga $-action 
$E':B_S^E[r]\ni h(r)\mapsto h(r+aT)\in B_S^E[r][T]$ 
satisfies $E'_1=\widetilde{\tau }$, 
but $E'$ does not necessarily restrict to $B$ 
even when $\tau $ is exponential. 
\end{rem}

Let us construct a rank three $\Ga $-action on $\kx $ mentioned above. 
Set 
\begin{equation}\label{eq:f,r,g}
f:=x_1^{p^2}-x_1^p+x_2x_3,\quad 
g:=f^{p^2}x_3-x_2^{p^2-1}+f^{p^2-p}x_2^{p-1}
\quad\text{and}\quad
r:=fx_1+x_2.
\end{equation}
Then, we have 
$k[f,g]\subset \kx ^{\ep }$ by (\ref{eq:k[x]^ep}), 
and $\gamma (k[f,g])=0$ by Remark~\ref{rem:rank ineq} (ii).

Now, set $A:=k[f,g,(fg)^{-1}]$. 
First, we show that $\kx \subset A[r]$. 
Observe that
\begin{equation}\label{eq:rank3 xi}
\begin{aligned}
\xi &:=f^{p^2+1}-r^{p^2}+f^{p^2-p}r^p \\
&=\underline{f^{p^2}(x_1^{p^2}}-\underline{\underline{x_1^p}}+x_2x_3)
-(\underline{f^{p^2}x_1^{p^2}}+x_2^{p^2})
+\underline{\underline{f^{p^2-p}(f^px_1^p}}+x_2^p)
=gx_2. 
\end{aligned}
\end{equation} 
Hence, 
we have 
$x_2=g^{-1}\xi \in A[r]$. 
Then, $x_1,x_3\in A[r]$ follow from 
\begin{equation}\label{re:x_1,x_3}
x_1=f^{-1}(r-x_2)\quad \text{and}\quad 
x_3=f^{-p^2}(g+x_2^{p^2-1}-f^{p^2-p}x_2^{p-1}).
\end{equation}

From $\kx \subset A[r]$, 
it follows that $f$, $g$ and $r$ are algebraically independent over $k$. 
Thus, 
we have $A[r]=A^{[1]}$. 
For $l,m\ge 0$, 
we define a $\Ga $-action $\widetilde{E}^{l,m}$ on $A[r]$ by 
$$
\widetilde{E}^{l,m}:A[r]\ni h(r)\mapsto h(r+f^lg^mT)\in A[r][T]. 
$$

\begin{prop}\label{prop:rank 3}
In the notation above, 
the following assertions hold.

\nd{\rm (i)} 
$\widetilde{E}_1^{l,m}$ restricts to $\kx $ 
if and only if $l,m\ge 1$ or $(l,m)=(1,0)$. 
Moreover, 
$\ep $ is the restriction of $\widetilde{E}_1^{1,0}$ to $\kx $.

\nd{\rm (ii)} 
$\widetilde{E}^{l,m}$ restricts to $\kx $ 
if and only if $l,m\ge 1$.

\nd{\rm (iii)} 
Let $E^{l,m}$ be the restriction of 
$\widetilde{E}^{l,m}$ to $\kx $, 
where $l,m\ge 1$. 
Then, we have $\kx ^{E^{l,m}}=k[f,g]$. 
Hence, 
$E^{l,m}$ has rank three 
and satisfies $\kx ^{E^{l,m}}\subset \kx ^{\ep }$. 
\end{prop}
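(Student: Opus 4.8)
The plan is to push everything through explicit computation of $\widetilde{E}^{l,m}$ and of $\widetilde{E}^{l,m}_1$ on the generators $x_1,x_2,x_3$. Since $\widetilde{E}^{l,m}$ is a homomorphism of $A$-algebras fixing $f$ and $g$ and sending $r$ to $r+f^lg^mT$, applying it to the identity $\xi=gx_2$ of \eqref{eq:rank3 xi} gives
$$\widetilde{E}^{l,m}(x_2)-x_2=f^{p^2-p+lp}g^{mp-1}T^p-f^{lp^2}g^{mp^2-1}T^{p^2},$$
and then the formulas $x_1=f^{-1}(r-x_2)$ and $x_3=f^{-p^2}(g+x_2^{p^2-1}-f^{p^2-p}x_2^{p-1})$ of \eqref{re:x_1,x_3} express $\widetilde{E}^{l,m}(x_1)-x_1$ and $\widetilde{E}^{l,m}(x_3)-x_3$ in terms of it; putting $T=1$ yields the analogous formulas for $\widetilde{E}^{l,m}_1$, in which the two monomials in the displayed difference can now cancel. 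Throughout I use that $\widetilde{E}^{l,m}$ restricts to $\kx$ (resp.\ $\widetilde{E}^{l,m}_1$ restricts to $\kx$) if and only if it sends $x_1,x_2,x_3$ into $\kx[T]$ (resp.\ into $\kx$) --- the latter because $\widetilde{E}^{l,m}_1$ has order $p$ whenever $f^lg^m\ne 0$, and a finite-order automorphism restricts to a subring as soon as it maps that subring into itself.

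The only non-formal inputs are the two divisibility facts, in $\kx$, that for every nonzero $h\in k[t]$ one has $f\nmid h(g)$ and $g\nmid h(f)$. For the first: $f=x_2x_3+(x_1^{p^2}-x_1^p)$ is primitive and of degree one in $x_3$, hence irreducible, so $\kx/f\kx$ is a domain; moreover $g\equiv -x_2^{p^2-1}\pmod{f\kx}$ and $x_2$ is transcendental over $k$ in $\kx/f\kx$, so $h(g)\equiv h(-x_2^{p^2-1})\ne 0$. For the second: the substitution $x_2\mapsto 0$ carries $g$ to $(x_1^{p^2}-x_1^p)^{p^2}x_3$ and $h(f)$ to $h(x_1^{p^2}-x_1^p)\in k[x_1]$, and a multiple of $(x_1^{p^2}-x_1^p)^{p^2}x_3$ lying in $k[x_1]$ must be zero, forcing $h=0$.

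For parts (i) and (ii) the \emph{if} direction is the verification that all exponents of $f$ and $g$ that occur are nonnegative, together with the remark that when $l,m\ge 1$ the displayed difference lies in $f^{p^2}\kx[T]$, which absorbs the factor $f^{-p^2}$ in the formula for $x_3$ and the factor $f^{-1}$ in that for $x_1$; the case $(l,m)=(1,0)$ is handled by direct substitution, which gives $\widetilde{E}^{1,0}_1=(x_1+1,x_2,x_3)=\ep$ and hence the last assertion of (i). For the \emph{only if} direction one exhibits, in each remaining case, a coefficient that is visibly $f^{-1}$ or $g^{-1}$ times a nonzero element of $k[g]$ or $k[f]$ and invokes the facts above: if $m=0$ and $l\ne 1$ then $\widetilde{E}^{l,0}_1(x_2)-x_2=g^{-1}(f^{p^2-p+lp}-f^{lp^2})\notin\kx$; if $m=0$ with $l$ arbitrary then the coefficient of $T^p$ in $\widetilde{E}^{l,0}(x_2)-x_2$ is $f^{p^2-p+lp}g^{-1}\notin\kx$; and if $l=0$ with $m\ge 1$ then $\widetilde{E}^{0,m}(x_1)-x_1$ has $T$-coefficient $f^{-1}g^m\notin\kx$, while $\widetilde{E}^{0,m}_1(x_1)-x_1$ fails to lie in $\kx$ because it equals $f^{-1}(g^m+g^{mp^2-1})$ modulo $\kx$ and $f\nmid g^m(1+g^{m(p^2-1)-1})$. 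These cases are exhaustive.

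For part (iii): since $f^lg^m$ is a unit of $A=k[f,g,(fg)^{-1}]$, the $\Ga$-action $\widetilde{E}^{l,m}$ on $A[r]=A^{[1]}$ has invariant ring $A$, so $\kx^{E^{l,m}}=A[r]^{\widetilde{E}^{l,m}}\cap\kx=A\cap\kx$, and it remains to prove $A\cap\kx=k[f,g]$, the inclusion $\supseteq$ being clear. Given $w\in A\cap\kx$, write $w=P(f,g)/(f^ag^b)$ with $P\in k[f,g]$ divisible by neither $f$ nor $g$ in $k[f,g]$; since $f,g,r$ are algebraically independent over $k$ (already established), there is no interference from $r$, and I claim $a=b=0$. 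Indeed $a\ge 1$ would force $f\mid P$ in $\kx$; but reducing modulo $f\kx$ replaces $P$ by its $f$-free part evaluated at $g=-x_2^{p^2-1}$, which is nonzero because $\kx/f\kx$ is a domain and $x_2$ is transcendental there, a contradiction; the bound $b=0$ follows symmetrically from $g\nmid h(f)$. Hence $w\in k[f,g]$. Once $\kx^{E^{l,m}}=k[f,g]$ is known, $E^{l,m}$ has rank three because $\gamma(k[f,g])=0$ (observed after \eqref{eq:f,r,g} via Remark~\ref{rem:rank ineq} (ii)), and $\kx^{E^{l,m}}=k[f,g]\subset\kx^{\ep}$ was observed there as well. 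I expect the main obstacle to be not any single step but the case analysis in the \emph{only if} parts of (i) and (ii) together with the verification $A\cap\kx=k[f,g]$: each individual computation is short once the two divisibility facts are available, and the care lies in making the bookkeeping complete --- in particular in checking the $x_3$-component in the \emph{if} direction of every relevant case.
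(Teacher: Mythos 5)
Your proof is correct, and its backbone is the same as the paper's: express $x_2=g^{-1}\xi$ via the identity $\xi=gx_2$ of (\ref{eq:rank3 xi}), compute $\widetilde{E}^{l,m}(x_2)$ explicitly, and then propagate to $x_1$ and $x_3$ through (\ref{re:x_1,x_3}); your case list for the ``only if'' directions is exhaustive (note that $(l,m)=(0,0)$ is absorbed into your case $m=0$, $l\ne 1$, where $f^{p^2-p+lp}-f^{lp^2}$ is indeed a nonzero element of $k[f]$). The differences are in execution and are worth recording. First, where the paper rules out the bad $(l,m)$ by applying the substitution maps $\pi_1$ (killing $x_2,x_3$) and $\pi_2$ (killing $x_1,x_3$) to suitable multiples of the images, you isolate two clean divisibility facts --- $f\nmid h(g)$ and $g\nmid h(f)$ for $0\ne h\in k[t]$ --- proved via the irreducibility of $f$ (degree one in $x_3$ with coprime coefficients) and the substitution $x_2\mapsto 0$; this packages the case analysis more uniformly. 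Second, and more substantially, for part (iii) the paper derives $A\cap\kx=k[f,g]$ by invoking the proof of Theorem 5.1 of \cite{ch order}, whereas you prove it directly from the same two divisibility facts by a minimal-denominator argument in the UFD $k[f,g]$; this makes the proposition self-contained at the cost of a short extra argument. One cosmetic point: when you write $w=P(f,g)/(f^ag^b)$ with $P$ ``divisible by neither $f$ nor $g$,'' the normalization you actually need (and use) is that $a,b$ are minimal, i.e., $a\ge 1$ implies $f\nmid P$ and $b\ge 1$ implies $g\nmid P$; as literally stated the normalization is impossible when $a=b=0$ and $w$ itself is a multiple of $f$, but this does not affect the argument.
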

\begin{proof}
Since $k[f,g]\subset \kx ^{\ep }$ 
and 
$\ep (r)=f\ep (x_1)+x_2=r+f=\widetilde{E}_1^{1,0}(r)$, 
the extension of $\ep $ to $A[r]$ is equal to $\widetilde{E}_1^{1,0}$. 
This proves the last part of (i). 
For the rest of (i) and (ii), 
it suffices to verify the following (a) and (b):

\nd (a) 
$\widetilde{E}:=\widetilde{E}^{1,1}$ restricts to $\kx $.

\nd (b) 
$\widetilde{E}_1^{l,0}(x_2)\not\in \kx $ 
if $l\ge 2$, 
$\widetilde{E}^{1,0}(x_2)\not\in \kx [T]$, 
and 
$\widetilde{E}_1^{0,m}(x_1)\not\in \kx $ if $m\ge 0$.

Actually, (a) implies the ``if" part of (ii) 
in view of Remark~\ref{rem:multiple of Ga-action}. 
Moreover, the ``if" part of (ii) implies that 
$\widetilde{E}_1^{l,m}$ restricts to $\kx $ if $l,m\ge 1$. 
Similarly, 
(b) implies the ``only if" parts of (i) and (ii).

First, we show (a). 
Since $\widetilde{E}$ fixes $f$ and $g$, 
(\ref{eq:rank3 xi}) gives that 
\begin{equation}\label{align:rank3}
\begin{aligned}
&\widetilde{E}(x_2)=\widetilde{E}(g^{-1}\xi )
=g^{-1}(\underline{f^{p^2+1}-(r}+fgT)^{p^2}+\underline{f^{p^2-p}(r}+fgT)^p) \\
&\quad 
=g^{-1}(\underline{\xi }-(fgT)^{p^2}+f^{p^2-p}(fgT)^p) 
=x_2-f^{p^2}g^{-1}((gT)^{p^2}-(gT)^p). 
\end{aligned}
\end{equation}
Hence, 
$\widetilde{E}(x_2)$ belongs to $\kx [T]$. 
From (\ref{re:x_1,x_3}) and (\ref{align:rank3}), 
we obtain 
\begin{equation}\label{align:rank3 2}
\begin{aligned}
\widetilde{E}(x_1)
=\widetilde{E}(f^{-1}(r-x_2))
&=\underline{f^{-1}(r}+fgT\,\underline{-\,x_2}
+f^{p^2}g^{-1}((gT)^{p^2}-(gT)^p)) \\
&=\underline{x_1}+gT+f^{p^2-1}g^{-1}((gT)^{p^2}-(gT)^p)
\in \kx [T]. 
\end{aligned}
\end{equation}
Similarly, 
we can check that 
$\widetilde{E}(x_3)=
\widetilde{E}(f^{-p^2}(g+x_2^{p^2-1}-f^{p^2-p}x_2^{p-1}))$ 
belongs to $\kx [T]$ 
by noting $\widetilde{E}(x_2)\in x_2+f^{p^2}\kx [T]$. 
This proves (a).

Next, we show (b). 
For $i=1,2$, 
let $\pi _i:\kx [T]\to k[x_i][T]$ be the substitution map 
defined by $x_j\mapsto 0$ for $j\ne i$. 
Then, 
we have $\pi _1(f)=x_1^{p^2}-x_1^p$ and $\pi _1(g)=0$. 
Now, 
suppose that $\widetilde{E}_1^{l,0}(x_2)\in \kx $ for some $l\ge 2$. 
Then, 
$\pi _1(g\widetilde{E}_1^{l,0}(x_2))$ is clearly zero. 
We note that 
$\widetilde{E}_1^{l,0}(x_2)$ is obtained from $\widetilde{E}(x_2)$ 
by replacing $T$ with $f^{l-1}g^{-1}$. 
Hence, 
we see from (\ref{align:rank3}) that 
$\pi _1(g\widetilde{E}_1^{l,0}(x_2))
=-\pi _1(f^{p^2}(f^{(l-1)p^2}-f^{(l-1)p}))\ne 0$. 
This is a contradiction. 
We can verify $\widetilde{E}^{1,0}(x_2)\not\in \kx [T]$ similarly 
by noting that $\widetilde{E}^{1,0}(x_2)=\widetilde{E}(x_2)|_{T=g^{-1}T}$. 
Since $\widetilde{E}_1^{0,m}(x_1)
=\widetilde{E}(x_1)|_{T=f^{-1}g^{m-1}}$ 
and since $\pi _2(f)=0$ and $\pi _2(g)=-x_2^{p^2-1}$, 
we see from (\ref{align:rank3 2}) that 
$\pi _2(fg\widetilde{E}_1^{0,m}(x_1))=\pi _2(g^{m+1}+g^{mp^2})\ne 0$ 
for any $m\ge 0$. 
This shows that $\widetilde{E}_1^{0,m}(x_1)\not\in \kx [T]$ as before.

For (iii), 
note that $\kx ^{E^{l,m}}=A\cap \kx $, 
since $A[r]^{\widetilde{E}^{l,m}}=A$ 
(cf.~Example~\ref{ex:Ga-action on R[x]}). 
We can prove that $A\cap \kx =k[f,g]$ 
in the same manner 
as Theorem 5.1 of \cite{ch order}, 
where the roles of $x_1$ and $x_2$ are interchanged. 
In fact, 
the proof of this theorem shows that 
$k[\hat{f},\hat{g},(\hat{f}\hat{g})^{-1}]\cap \kx =k[\hat{f},\hat{g}]$ 
holds for any $\hat{f},\hat{g}\in \kx \sm \zs $ 
with $\hat{f}|_{x_3=0}\in k[x_2]\sm k$, 
$\hat{g}|_{x_3=0}\not\in k[x_2]$, 
$\hat{f}|_{x_2,x_3=0}=0$ and $\hat{g}|_{x_2,x_3=0}\not\in k$. 
The last part of (iii) follows from the remark after (\ref{eq:f,r,g}). 
\end{proof}

We mention that, for a certain family of rank three $\Ga $-actions on $\kx $, 
the invariant rings 
for the induced automorphisms were studied in detail in \cite{ch order}. 
See also \cite{LSC} for a construction of rank three 
$\Ga $-actions on $\kx $ for $p=0$.

\section{Questions}\label{sect:q and r}
\setcounter{equation}{0}

\subsection{}
Recall the sequence of subgroups 
$\T _n(R)\subset \T '_n(R)\subset \T ''_n(R)\subset \Aut _R\Rx $ 
discussed in Section~\ref{sect:intro}. 
This paper is strongly motivated by 
the following question 
(see also \cite[Question 7.5]{ch order}).

\begin{q}\label{q:exp gen}\rm 
Assume that $n\ge 3$, 
or $n=2$ and $R$ is not a field. 

\nd (1) Does $\T _n'(R)=\T _n''(R)$ hold?

\nd (1) Does $\T _n''(R)=\Aut _R\Rx $ hold? 

\end{q}

\subsection{}
For $\sigma \in \Aut B$ and $r\ge 0$, 
we denote by $\sigma ^{[r]}\in \Aut B[y_1,\ldots ,y_r]$ 
the extension of $\sigma $ defined by 
$\sigma ^{[r]}(y_i)=y_i$ for $i=1,\ldots ,r$, 
where $y_1,\ldots ,y_r$ are variables. 
We say that $\sigma \in \Aut B$ is {\it stably exponential} (over $R$) 
if $\sigma ^{[r]}$ is exponential (over $R$) for some $r\ge 0$. 
Clearly, 
``exponential" implies ``stably exponential". 
It is easy to see that no elements of $\Aut _R\Rxx \sm \{ \id \} $ 
are stably exponential because of Remark~\ref{rem:local} (ii) 
(see also Remark~\ref{rem:quasi-translation} (ii)).

\begin{q}\label{q:stb exp1}\rm 
Does there exist an element of $\Aut _R\Rx $ of order $p$ 
which is not stably exponential over $R$?
\end{q}

The {\it Stable Tameness Conjecture} 
asserts that any 
$\sigma \in \Aut _R\Rx $ is {\it stably tame}, 
i.e., 
$\sigma ^{[r]}\in \T _{n+r}(R)$ holds 
for some $r\ge 0$ (cf.~\cite[Conjecture 6.1.8]{Essen}). 
The conjecture is known to be true if $n=2$ and $R$ is a regular ring 
by Berson-van den Essen-Wright~\cite[Theorem 4.10]{BEW}. 
This implies that 
the generic elementary automorphism 
$\sigma ^g$ given in Section~\ref{subsect:non exp aut} 
is stably tame if $R$ is regular, 
since $\sigma ^g$ lies $\Aut _{R[\z ]}R[\z ][x,y]$, 
and $R[\z ]$ is regular if so is $R$.

We can generalize the notion of ``stable tameness" in two ways.

\begin{q}\label{q:stb exp gen}\rm 
Let $\sigma \in \Aut _R\Rx $.

\nd (i) Does there always exist $r\ge 0$ 
such that $\sigma ^{[r]}$ belong to $\T''_{n+r}(R)$?

\nd (ii) Does there always exist $r\ge 0$ 
such that $\sigma ^{[r]}$ belong to $\T'_{n+r}(R)$? 

\end{q}

\subsection{}
Theorems~\ref{thm:main} and \ref{thm:non-exp} 
lead to the following four questions.

\begin{q}\label{q:over a field}\rm 
Let $k$ be a field. 
Does there exist an element of $\Aut _k\kx $ of order $p$ 
which is not exponential over $k$?
\end{q}

\begin{q}\label{q:fixed point free}\rm 
Does there exist a fixed point free 
generic elementary automorphism of $\Rx $ 
(see the beginning of Section~\ref{sect:center}) 
which is not exponential over $R$?
\end{q}

\begin{q}\label{q:higher dim}\rm 
Assume that $n\ge 4$, 
or $n=3$ and $R$ is not a field. 
Does there exist an element of $\J _n(R)$ of order $p$ 
which is not exponential over $R$?
\end{q}

More specifically, 
we may ask if $E_1,E_1'\in \J _3(R)$ 
in Example~\ref{ex:triangular} are exponential. 
By construction, 
they are fixed point free 
generic elementary automorphisms.

\begin{q}\label{q:generic elementary}\rm 
Let $\sigma ^g$ be the 
generic elementary automorphism of $\Rxyz $ 
given in Section~\ref{subsect:non exp aut} 
(or one constructed in a similar manner). 

\nd {\rm (i)} 
Is $\sigma ^g$ exponential over $R$ 
in the case where $k[\widetilde{y},\z ]$ is not $(1+ug)$-stable over $k$?

\nd{\rm (ii)} Is $\sigma ^g$ stably exponential over $R$?

\nd {\rm (iii)} 
Is $\sigma ^g$ exponential over a field contained in $R$?

\nd{\rm (iv)} Does $\sigma ^g$ belong to $\T '_{2+l}(R)$? 

\end{q}

\subsection{}
We call $\sigma \in \Aut B$ a {\it semi-translation} of $B$ 
if there exists a ring $\mathcal{K}$ and $a\in \mathcal{K}\sm \zs $ 
such that $B$ can be embedded into the polynomial ring $\mathcal{K}[x]$ and 
$\sigma $ is the restriction of $\tau \in \Aut _{\cK }\mathcal{K}[x]$ 
defined by $\tau (x)=x+a$. 
Note that $\mathcal{K}[x]\subset Q(\mathcal{K})[a^{-1}x]$ 
and $\tau (a^{-1}x)=a^{-1}x+1$, 
so we may assume that $\mathcal{K}$ is a field and $a=1$ 
in this definition. 
Clearly, 
a quasi-translation is a semi-translation, 
and a non-identity semi-trans\-la\-tion has order $p$. 
Remark~\ref{rem:quasi-translation} also applies to semi-translations.

\begin{q}\label{q:converse of C}\rm 
Let $\sigma \in \Aut _R\Rx $ be of order $p$.

\nd (1) Is $\sigma $ always a quasi-translation of $\Rx $? 

\nd (2) Is $\sigma $ always a semi-translation of $\Rx $? 

\end{q}

Let us call a semi-translation of $B$ 
a {\it super-translation} of $B$ 
if it is not a quasi-translation of $B$. 
Every super-translation has order $p$, 
but is not exponential by Remark~\ref{rem:local} (v).

\begin{q}\label{q:super-translation}\rm 
Does there exist a super-translation of $\Rx $?
\end{q}

\subsection{}
The following question is also of interest (cf.~Section~\ref{subsect:A(e)3}).

\begin{q}\label{q:rank3}\rm 
Assume that $n=3$ and $k$ is a field. 
Does there exist a rank three $\Ga $-action $E$ on $\kx $ 
such that $E_1=(x_1+1,x_2,x_3)$? 
\end{q}

\if0

\section*{Declarations}

\subsection*{Funding}
This work is partly supported by JSPS KAKENHI
Grant Numbers 18K03219 and 22K03273. 

\subsection*{Conflicts of Interest}
The author declares that he has no conflict of interest.
\fi

\end{document}